\theoremstyle{plain}
\newtheorem{theo}{Theorem}[section]
\newtheorem{lem}[theo]{Lemma}
\theoremstyle{definition}
\newtheorem{example}[theo]{Example}
\newtheorem{definition}[theo]{Definition}
\theoremstyle{plain}
\newtheorem{theorem}[theo]{Theorem}
\newtheorem{proposition}[theo]{Proposition}
\theoremstyle{definition}
\newtheorem{remark}[theo]{Remark}
\newcommand{\beq}{\begin{equation}}
\newcommand{\eeq}{\end{equation}}
\newcommand{\ve}{\varepsilon}
\renewcommand{\i}{\iota}
\newcommand{\C}{\mathbb{C}}
\newcommand{\R}{\mathbb{R}}
\renewcommand{\H}{\mathbb{H}}
\newcommand{\bH}{\mathbb{H}}
\newcommand{\Z}{\mathbb{Z}}
\newcommand{\bN}{\mathbb{N}}
\newcommand{\bS}{\mathbb{S}}
\newcommand{\cI}{\mathcal{I}}
\renewcommand{\cR}{\mathcal{R}}
\newcommand{\cS}{\mathcal{S}}
\newcommand{\ra}{\rightarrow}
\def\Log{\mathop\text{\rm log}\nolimits}
\renewcommand{\square}{\kern1pt\vbox
{\hrule height 0.6pt\hbox{\vrule width 0.6pt\hskip 3pt
\vbox{\vskip 6pt}\hskip 3pt\vrule width 0.6pt}\hrule height0.6pt}\kern1pt}
\DeclareMathOperator\Id{Id}
\renewcommand\Im{\operatorname{Im}}
\renewcommand{\Im}{{\rm Im}}
\renewcommand{\min}{{\rm min}\,}
\newcommand\refl[1]{R(#1)}
\newcommand{\be}{\begin{equation}}
\newcommand{\ee}{\end{equation}}
\def\<#1,#2>{\langle\,#1,\,#2\,\rangle}
\newcommand{\arr}{\begin{array}{rlll}}
\newcommand{\ea}{\end{array}}
\newcommand{\bea}{\begin{eqnarray}}
\newcommand{\eea}{\end{eqnarray}}
\newcommand{\bean}{\begin{eqnarray*}}
\newcommand{\eean}{\end{eqnarray*}}
\newcommand{\ssd}{symmetric slice domain }
\def\sideremark#1{\ifvmode\leavevmode\fi\vadjust{
\vbox to0pt{\hbox to 0pt{\hskip\hsize\hskip1em
\vbox{\hsize3cm\tiny\raggedright\pretolerance10000
\noindent #1\hfill}\hss}\vbox to8pt{\vfil}\vss}}}
\newcounter{ssig}
\newcounter{ttig}
\begin{document}
\title[Vanishing of quaternionic cohomology groups and applications]{Vanishing of quaternionic cohomology groups and applications}
\
\author{Jasna Prezelj}
\address{UL FMF, Jadranska 19,
  Ljubljana, Slovenija, UP FAMNIT, Glagolja\v ska 8,  Koper, Slovenija, IMFM, Jadranska 19, Ljubljana, Slovenija}
\email { jasna.prezelj@fmf.uni-lj.si}
\author {Fabio  Vlacci}\address{MIGe Universit\`a di Trieste Piazzale Europa 1,\ Trieste,
  Italy} \email{ fvlacci@units.it} \thanks{\rm The first author
  was partially supported by research program P1-0291 and by research
  projects N1-0237, J1-3005 at Slovenian Research Agency.  The second author was partially supported by
  Progetto MIUR di Rilevante Interesse Nazionale PRIN 2010-11 {\it
    Variet\`a reali e complesse: geometria, topologia e analisi
    armonica}.  The research that led to the present paper was
  partially supported by a grant of the group GNSAGA of Istituto
  Nazionale di Alta Matematica `F: Severi'.}

\subjclass{30G35, 14C20}
\keywords{Cartan coverings, Cousin problems, cohomology groups, divisors}

\maketitle
\begin{abstract}
We present  solutions to additive and multiplicative Cousin problems formulated on an axially symmetric domain $\Omega \subset \H$ for  slice--regular functions  starting from the solutions for  subclasses, namely slice--regular slice--preserving functions and functions in a given vectorial class. As a consequence, we prove the vanishing of the corresponding cohomology groups with respect to axially symmetric open coverings (Theorems \ref{H1*}, \ref{H^1SRR+}, \ref{H^1SRR}).  The main tool used in the proofs of these theorems is the existence of quaternionic Cartan coverings and Cartan's splitting lemmas. 
As an application we prove a jet interpolation theorem (Theorem \ref{jets1}) and we show that every divisor is principal (Theorem \ref{divisor}).
\end{abstract}

\section{Introduction}\label{intro}
The theory of slice--regular functions (denoted by $\mathcal{SR}$ and shortly recalled in the
Section \ref{prelim} and mainly focussed on specific aspects useful later)
clearly shows many promising aspects to become a good framework to
consider the generalizations of Cousin problems for quaternionic
functions; indeed, the notion of slice--regularity is a good extension
of the notion of holomorphicity for quaternionic functions and
semi-regular functions play the role of meromorphic functions.  The
analogue of Weierstrass and Mittag-Leffler Theorems are already at our
disposal (see \cite{GSS}). Furthermore, domains of holomorphicity are
generalized in terms of quaternionic axially symmetric domains.

In the present paper we prove the analogues of the above theorems for arbitrary axially symmetric domains $\Omega \subset \H.$

With the proper definition of sheaves (Section \ref{sheaves}) and  cohomology, namely, the cohomology with respect to {\em axially symmetric coverings} (Definition \ref{asop}),
we can formulate the first main theorem of this paper:

\begin{theorem}[Main Theorem 1]\label{H1*}
  Let $\Omega$ be an axially symmetric domain. Then $$H^1(\Omega, \mathcal{SR}^*) = 0.$$
\end{theorem}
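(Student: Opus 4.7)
The plan is to prove $H^1(\Omega,\mathcal{SR}^*)=0$ by non-abelian \v{C}ech cohomology and Cartan's attaching method, adapted to the axially symmetric category. Starting from an arbitrary cocycle $\{g_{\alpha\beta}\}\subset\mathcal{SR}^*(U_\alpha\cap U_\beta)$ on an axially symmetric open covering $\{U_\alpha\}$ of $\Omega$, I would first refine to an axially symmetric quaternionic Cartan covering, whose existence is part of the toolkit announced in the introduction. The usual induction on the nerve then reduces the theorem to a two-patch statement: for any Cartan-attached pair $(U_1,U_2)$ of axially symmetric sets and any $g\in\mathcal{SR}^*(U_1\cap U_2)$, one must write $g=g_1\cdot g_2^{-1}$ with $g_i\in\mathcal{SR}^*(U_i)$.

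To solve this two-patch splitting, I would exploit the symmetrization $g^s:=g\cdot g^c$, which is an invertible slice--preserving slice--regular function on $U_1\cap U_2$. Because slice--preserving functions form a commutative subalgebra on which $\exp$ behaves as in the complex case, a logarithm argument combined with Theorem~\ref{H^1SRR+} yields a splitting $g^s=s_1\,s_2^{-1}$ with $s_i$ invertible and slice--preserving on $U_i$. Taking slice--preserving square roots of the $s_i$, which exist on the simply-connected axially symmetric Cartan pieces, furnishes a first approximation to the desired factors of $g$.

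The residual cocycle then lies in the vectorial subsheaf, to which Theorem~\ref{H^1SRR} applies; its additive splitting produces the correction needed to upgrade the approximation to an exact multiplicative splitting. Because of non-commutativity, the combination of scalar and vectorial data must carefully distinguish left and right multiplication in $\mathcal{SR}^*$. Iterating this two-step procedure as in the classical Cartan proof, using uniform approximation on shrinking Cartan pieces to guarantee convergence, yields $g_i\in\mathcal{SR}^*(U_i)$ with $g=g_1\cdot g_2^{-1}$, completing the two-patch splitting and hence the theorem.

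The main obstacle is the non-commutativity of $\H$: unlike in the complex setting, the exponential of a general slice--regular function is not slice--regular, so the standard shortcut of passing from the multiplicative to an additive Cousin problem via $\log g_{\alpha\beta}$ is unavailable for the full sheaf $\mathcal{SR}^*$. The argument must therefore be organized around the scalar/vectorial decomposition, reducing the multiplicative problem to additive problems only inside subsheaves where exponentiation behaves well, while carefully tracking the non-abelian corrections introduced when patching a slice--preserving factor to a vectorial one.
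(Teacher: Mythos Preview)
Your overall architecture (refine to a quaternionic Cartan covering, reduce to a two-patch multiplicative splitting, iterate with estimates) matches the paper. The genuine divergence, and the gap, is in how you solve the two-patch splitting $c = a * b$ for a general $c \in \mathcal{SR}^*(U_1\cap U_2)$.

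The paper does \emph{not} use the scalar/vectorial decomposition of $c$ at this step. Instead it passes to the $4\times 4$ matrix representation $c \leftrightarrow M_c$ from Section~\ref{MatRep}, under which the $*$-product becomes ordinary matrix multiplication and the entries of $M_c$ are slice--preserving (hence, slicewise, ordinary holomorphic) functions. The multiplicative splitting is then literally the classical Cartan lemma for $GL_4$-valued holomorphic functions (Gunning--Rossi, VI.E, Theorem~7), transported to the quaternionic setting via the additive splitting lemma for $\mathcal{SR}_{\mathbb R}$ and Runge approximation (Theorem~\ref{MCSLM} and Remark~\ref{notcm}). Non-commutativity is absorbed entirely into the matrix algebra; no logarithm of $c$ is ever taken.

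Your route has a real obstruction. After you split $c^s = s_1 s_2^{-1}$ and adjust by slice--preserving factors, the ``residual'' is a nonvanishing slice--regular function with symmetrization $1$; but such a function does \emph{not} lie in a single vectorial class $\omega$. Its vectorial part $f_v$ need not be a $\mathcal{SR}_{\mathbb R}$-multiple of a fixed minimal representative---the class $[f_v]$ can vary across the domain---so neither Theorem~\ref{H^1SRR+}(2) nor Theorem~\ref{H^1SRR}(2) applies to it. There is also a subtlety in the square-root step: for slice--preserving $h$ one has $h^s = h * h^c$, not $h^2$, so taking $\sqrt{s_i}$ does not normalize the symmetrization in the way you suggest. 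In short, the symmetrization trick peels off a commutative scalar factor, but what remains is still a genuinely non-abelian object outside the reach of the $\omega$-theorems; the paper bypasses this by the matrix embedding, which is the missing idea in your proposal.
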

In quaternionic settings we have two types of jets, usual ones at a point, i.e. the value and some derivatives at a point are given, and spherical jets, given by finite spherical expansions (see \cite{GSS}).  Theorem \ref{H1*} yields the second main theorem of this paper,
\begin{theorem}[Main Theorem 2]\label{jets1}
  Let $\Omega$ be an axially symmetric domain and $Z$ a discrete set of points and spheres.
  Then there exists $f \in \mathcal {SR}(\Omega)$ with prescribed finite jets at $Z.$
\end{theorem}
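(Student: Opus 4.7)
The plan is to use the multiplicative vanishing of Theorem \ref{H1*} to construct a slice--regular function with the prescribed zero divisor, and to combine it with the additive vanishing of $H^1(\Omega,\mathcal{SR})$ via a short exact sequence of sheaves, bypassing any direct Cech bookkeeping.

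Write $Z=\{z_i\}$ with assigned jet orders $k_i$, and form the divisor $D:=\sum_i(k_i+1)[z_i]$. Theorem \ref{H1*} (i.e.\ the vanishing $H^1(\Omega,\mathcal{SR}^*)=0$ for axially symmetric coverings) solves the Cousin II problem for $D$ and furnishes $u\in\mathcal{SR}(\Omega)$ whose zero divisor is exactly $D$: $u$ vanishes to order $k_i+1$ at each $z_i$ and is nowhere zero on $\Omega\setminus Z$.

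Slice--regular $\star$-multiplication by $u$ is an injective sheaf morphism $\mathcal{SR}\to\mathcal{SR}$ and yields the short exact sequence of sheaves on $\Omega$
$$0\longrightarrow\mathcal{SR}\xrightarrow{\,\star u\,}\mathcal{SR}\longrightarrow\cJ\longrightarrow 0,$$
where $\cJ:=\mathcal{SR}/(u\star\mathcal{SR})$ is a skyscraper sheaf supported on $Z$ whose stalk at $z_i$ is canonically identified with the quaternionic space of jets of order $k_i$ at $z_i$. The induced long exact sequence in cohomology with respect to axially symmetric coverings reads
$$H^0(\Omega,\mathcal{SR})\xrightarrow{\,\star u\,}H^0(\Omega,\mathcal{SR})\longrightarrow H^0(\Omega,\cJ)\longrightarrow H^1(\Omega,\mathcal{SR}),$$
and combining this with the additive vanishing $H^1(\Omega,\mathcal{SR})=0$ --- which assembles from Theorems \ref{H^1SRR+} and \ref{H^1SRR} applied to the slice--preserving and vectorial summands of a slice--regular function --- gives the surjectivity of the map $H^0(\Omega,\mathcal{SR})\to H^0(\Omega,\cJ)$. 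Since $H^0(\Omega,\cJ)$ is precisely the space of jet assignments at $Z$ of the prescribed orders, every such assignment lifts to some $f\in\mathcal{SR}(\Omega)$, proving the theorem.

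The main obstacle is identifying the stalk of $\cJ$ at each $z_i$ with the jet space. This amounts to verifying that $u\star\mathcal{SR}_{z_i}$ equals the $(k_i+1)$-st power of the maximal ideal of the local slice--regular ring at $z_i$, which reduces to local calculations using ordinary Taylor expansions at real points and spherical expansions at 2--spheres, combined with the $\star$-division results of \cite{GSS}. Isolated real points and spherical components of $Z$ require separate treatment, and one must also ensure that the long exact sequence operates correctly in the cohomology theory tailored to axially symmetric coverings --- which is where the Leray-type properties of the axially symmetric Cartan coverings constructed in the preceding sections are essential.
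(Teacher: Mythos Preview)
Your approach is correct and is essentially a sheaf--theoretic repackaging of the paper's own argument. Both proofs begin by invoking $H^1(\Omega,\mathcal{SR}^*)=0$ to produce a global slice--regular function vanishing to sufficiently high order on $Z$ (the paper calls it $g$, obtained via Theorem~\ref{zeromult}), and both then reduce the jet problem to the additive vanishing $H^1(\Omega,\mathcal{SR})=0$. The difference is only in packaging: the paper works directly with the explicit \v{C}ech cocycle $v_{kl}=g^{-*}*(Q_k-Q_l)$, where the $Q_k$ are local polynomials carrying the prescribed jets, solves it via Theorem~\ref{H^1SRR+}(3), and assembles the answer as $Q_k-g*v_k$; you encode the same step as the connecting homomorphism of the long exact sequence attached to $0\to\mathcal{SR}\to\mathcal{SR}\to\mathcal{J}\to0$. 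Unwinding your connecting map recovers precisely the paper's cocycle.

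What the paper's hands--on route buys is that it stays entirely within the \v{C}ech calculus already set up: no long--exact--sequence machinery for the axially--symmetric--covering cohomology is needed. Your route is conceptually cleaner but carries the burdens you flag in your last paragraph, and they are real. First, the long exact sequence must be shown to hold for this restricted cohomology theory; this amounts to knowing that basic domains are acyclic for $\mathcal{SR}$, which is implicit in the paper but not stated as such. Second, the stalk identification $\mathcal{SR}_{\tilde z}/(u*\mathcal{SR}_{\tilde z})\cong(\text{jets at }z)$ is not entirely routine in the noncommutative setting: for a nonreal isolated point $z_i$ you need the factorization $u=(q-z_i)^{*(k_i+1)}*w$ with $w$ a local unit (from the division results in \cite{GSS}), and when a sphere $S_j$ carries both a spherical order and an isolated point $z_j'$ (as in the paper's decomposition $Z=\mathcal{S}\cup\mathcal{X}\cup\mathcal{Z}$) the local model mixes the two expansions. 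Finally, be precise about the side of multiplication: since it is the left factor whose zeros are inherited by a $*$--product (Proposition~\ref{regularitygp1}(c)), your sheaf map must be $f\mapsto u*f$, so that $\mathcal{J}$ is a sheaf of right $\H$--modules, which is indeed the correct structure for the jet spaces as the paper defines them.
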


In particular, on any axially symmetric domain, there exists  functions with prescribed zeroes  (Theorem  \ref{zeromult}).
Another consequence of Theorem \ref{H1*} is the existence of principal divisors, i.e. functions with prescribed zeroes and poles (Theorem \ref{divisor}). \\

The main method applied in the proofs of these theorems is gluing  local solutions. To this end we prove  additive and multiplicative Cartan's Splitting lemmas for various classes of slice--regular functions and use them for solving first and second Cousin problems for the aforementioned classes of slice--regular functions (Section \ref{cousin}, Theorems \ref{H^1SRR}, \ref{H^1SRR+}). To deal with the  Cousin problems one has to work with {\em Cartan coverings  }(Section 4 in \cite{PV2}).
In the last section, Section \ref{applications}, we apply our results to prove theorems, which are analogous to results on divisors and jet interpolation theorems on Stein manifolds.

\section{Preliminary results}\label{prelim}


 We denote by $\mathbb{H}$ the algebra of quaternions. Let
 $\mathbb{S}$ be the sphere of imaginary units, i.e. the set of
 quaternions $I$ such that $I^2=-1$.  Given any quaternion $z \not\in
 \mathbb{R},$ there exist (and are uniquely determined) an imaginary
 unit $I$, and two real numbers $x,y$ (with $y\geq 0$) such that
 $z=x+Iy$. With this notation, the conjugate of $z$ will be $\bar z :=
 x-Iy$ and $|z|^2=z\bar z=\bar z z=x^2+y^2$.
 Each imaginary unit $I$ generates (as a real algebra) a copy
 of a complex plane denoted by $\mathbb{C}_I:=\R+I\R$. We call such a complex
 plane a {\em slice}.  The upper half-plane in $\mathbb{C}_I$, namely $\{x+yI\ :\ y>0\}$ will be
 denoted by $\C_I^+$ and called a {\em leaf}. Set $\C_I^-:=\C_{-I}^+$ and for  a subset $E \subset \H$ define
 $E_I:=E \cap  \C_I, $  $E_I^+:=E \cap  \C_I^+, $ $E_I^-:=E \cap  \C_I^-, $ $E_I^0:=E \cap  \R.$
 We set $\mathbb{S}(a,r):=\{a + rI, I\in \mathbb{S}\} = a + r \mathbb{S}$ for $ a,r \in \R$.
The
 \emph{(axial) symmetrization} $\widetilde E$ of a subset $E$ of $\bH$ is defined by
 $$\widetilde{E} = \{ x + I y:  x,y \in \R, I\in \bS, (x + \bS y) \cap E \neq \varnothing \}.$$
 A subset $\Omega$ of $\bH$ is called
 {\em (axially) symmetric} (in $\bH$) if $\widetilde{\Omega} = \Omega.$  A subset $U$ of $\C$ is called
 {\em  symmetric} if it is invariant for the reflection $R$ over the real axis, i.e.  $U = R(U)$.



In the sequel we briefly recall some standard definitions (see \cite{GSS}, \cite{GPV}).

Natural domains of definition for slice--regular functions are {\em slice domains}.
A domain $\Omega$ of $\mathbb H$ is called a \emph{slice domain} if, for all $I\in \mathbb S$, the subset
$\Omega_I$ is a domain in $\C_I$ and $\Omega^0=\Omega\cap\R\neq \varnothing$.  If, moreover, $\Omega$ is axially symmetric, then it is called a {\em symmetric} slice domain.
Slice functions (see [GP]) are naturally defined also on {\em product domains}, i.e. 
axially symmetric domains of $\mathbb H \setminus \R$. 
Hence an axially symmetric domain $\Omega$ is either a symmetric slice domain or it is a product domain.


The present paper focuses on axially symmetric domains only and slice--regular functions defined on such sets. Contrary to the complex case, where a ball is a natural local domain of definition for holomorphic functions, in the case of slice--regular functions the natural domains, called {\em basic domains} (\cite{GPV}) are symmetrizations of topological discs in $\mathbb{C}_I \setminus \R$ ({\em basic product domains}) or symmetrizations of  symmetric topological discs in $\C_I$ ({\em basic slice domains}), $I \in \mathbb{S}$.
A basic domain is also a \emph{basic neighborhood} of any of its points.  We define also the empty set to be a basic set.
 An axially symmetric closed set  $V$ of $\mathbb H$ is called a {\em closed basic set}  if it is either a symmetrization of a closed topological discs in $\mathbb{C}_I \setminus \R$ or a symmetrization of a symmetric closed topological  disc in $\C_I$ for some $I \in \mathbb{S}$.

Notice that the intersection of a basic domain with the real axis is either empty or connected. The closure of a basic set is not necessarily a closed basic set. A closed basic set with a piecewise smooth boundary has a basis of basic sets.
 \begin{definition}[Symmetric open covering]\label{asop} Let $U \subset \C$ be a symmetric domain and
  let $\mathcal {U}=\{U_{\lambda}\}_{\lambda \in\Lambda}$
  be an open
covering of $U.$ The covering $\mathcal{U}$
is  a
{\em  symmetric open
  covering} if each $U_{\lambda} \in {\mathcal U}$ is a
symmetric  open set.
\end{definition}
\begin{definition}[Axially symmetric open covering]\label{asop} Let $\Omega \subset \H$ be an axially symmetric domain and
  let $\mathcal {U}=\{U_{\lambda}\}_{\lambda \in\Lambda}$
  be an open
covering of $\Omega.$ The covering $\mathcal{U}$
is  an
{\em axially symmetric open
  covering} if each $U_{\lambda} \in {\mathcal U}$ is an axially
symmetric  open set.
\end{definition}

If $\i^2 = -1,$
consider the complexification $\H_\C =\H+\i \H,$ of the skew field
$\H$ and set $\zeta\colon z = x+\i y \mapsto x-\i y=: \bar{z}$ to be the natural involution of
$\H_\C$. For any $J\in \bS$, define the map
\[\phi_J\colon \H_\C \to \H, \quad \phi_J(x+\i y)=x+Jy\]
Notice that the map $\phi_J$, when restricted to $\R_\C\cong \R+\i \R\cong \C$, is an isomorphism between $\C$ and $\R+J\R=\C_J$.

If $\Omega \subseteq \H$ is an axially symmetric domain, and if $i$
denotes the imaginary unit of $\C\subset\H$, then the intersection
$\Omega_{i}=\Omega\cap (\R+i\R)=\Omega\cap \C$ defines a domain of the
complex plane that is invariant under complex conjugation. 
With respect to the
established notations, the subset $\Omega_{\i}=\{x+\i y \in \H+\i \H :
x+iy \in \Omega_i\}$ is called the \emph{image of $\Omega_i$ in
  $\H_\C$}, 
  and is invariant under involution, i.e.,
$\Omega_{\i}=\zeta(\Omega_{\i})$.  We are now in a position to
recall the following definitions.

\begin{definition}[Stem function]\label{stem} Let $\Omega \subseteq \H$ be an axially symmetric open set, let $\Omega_{i}=\Omega\cap (\R+i\R)$ and let $\Omega_{\i}$ be the image of $\Omega_i$ in $\H_\C$.

A (continuous) function $F \colon \Omega_{\i} \to\H_\C$ is called a \emph{stem function} if $F(\bar z) = \overline{F(z)}$ for all $z \in \Omega_{\i}$. For each stem function $F \colon \Omega_{\i} \to \H_\C$, there exists a unique $f \colon \Omega \to \H$ such that the diagram
$$\begin{tikzcd}
\Omega_{\i}\arrow{d}{\phi_J}\arrow{r}{F}&\H_\C\arrow{d}{\phi_J}\\
\Omega\arrow{r}{f}&\H
\end{tikzcd}$$
commutes for all $J \in \bS$. The function $f$ is called the \emph{slice function} induced by $F$ and denoted by $\mathcal{J}(F)$.
The set of all slice functions on $\Omega$ is denoted by $\mathcal S(\Omega).$

Let $f=\mathcal J(F ), g=\mathcal J (G)$ be the slice functions induced by the stem functions $F, G$ respectively. The \emph{$*$-product} of $f$ and $g$ is defined as the slice function $f*g:=\mathcal J(FG)$.

\end{definition}
We will use  definitions of slice--regularity and of $*$-product that
involve stem functions, and that are valid for any axially symmetric
domain of $\H$. When restricted to symmetric slice domains, slice--regularity
coincides with the Definition 1.2 of slice--regularity initially
presented in \cite{GS}.

\begin{definition}[Slice--regular function]\label{regularitygp} Let $\Omega \subseteq \H$ be an axially symmetric open set.
A slice function $f \colon \Omega \to \H$, induced by a stem function $F \colon \Omega_{\i} \to \H_\C$, is called \emph{slice--regular} if $F$ is holomorphic.
The set of all slice--regular functions on $\Omega$ is denoted by $\mathcal{SR}(\Omega).$

A  slice
function $f\colon \Omega \to \H$ is said to be {\em slice--preserving} if and only if
$\forall I \in \bS, \forall z\in\Omega_I:=\Omega\cap \mathbb{C}_I$ we have that
$f(z)\in \C_I$. The set of all slice--regular
functions, which are slice--preserving in $\Omega,$ will be denoted as
$\cS\mathcal{R}_{\mathbb{R}}(\Omega)$.
The set  $\mathcal{SR}^*_{\R}(\Omega) \subset \mathcal{SR}_{\R}(\Omega)$ denotes the subset set of all nonvanishing functions and
 $\mathcal{SR}^+_{\R}(\Omega)\subset \mathcal{SR}_{\R}(\Omega)$ denotes the set of all functions  $\mathcal{SR}_{\R}(\Omega)$ which are strictly positive on the real axis.

 The sup-norm for a bounded function $f$ on $\Omega$ is denoted by $|f|_{\Omega}$.
\end{definition}

\begin{definition}[\cite{GSS}, Definition 5.21] A function $f$ is {\em semiregular} in a symmetric slice domain $\Omega$ if it is
regular in a symmetric slice domain $\Omega'\subset \Omega$
 such that every point of $\Omega \setminus \Omega'$ is
a pole (or a removable singularity) for $f$. We denote the set of semiregular functions on $\Omega$ by $\mathcal{SMR}(\Omega).$
\end{definition}

\begin{remark} The (conceptual) correspondence between the classes $\mathcal C(\Omega),$ $\mathcal O(\Omega)$ and $\mathcal M(\Omega)$ of (complex) continuous, holomorphic and meromorphic functions and the classes $\mathcal S(\Omega),$ $\mathcal{SR}(\Omega)$ and $\mathcal{SMR}(\Omega)$ of (quaternionic) slice, slice--regular and semiregular functions on $\Omega$ is presented below.
\[
  \mathcal C(\Omega)\leftrightarrow \mathcal S(\Omega) \qquad \mathcal O(\Omega)\leftrightarrow \mathcal{SR}(\Omega) \qquad \mathcal M(\Omega)\leftrightarrow \mathcal{SMR}(\Omega).
\]
\end{remark}

As shown in \cite{GSS}, the zero set of any $f \in \mathcal{SR}(\Omega), \ f \ne 0$, is a discrete set of points and spheres of the form $S(a,r)$ contained in $\Omega$.
 At each point $q_0 \in \Omega$, by Theorem 2.12 in \cite{GSS}, the function $f$ has a Taylor series expansion of the form
\[ f(q) = \sum_{n = 0}^{\infty} (q - q_0)^{*n} f^{(n)}(q_0)/n!,\]
which converges on some $\sigma$-ball $\Sigma(q_0,R)$.
The notion of a jet of $f$ of order $l$  at a point $q_0$  then consists of a sequence of  numbers $A_0, A_1,\ldots A_l$ representing the values $f^{(n)}(q_0)/n!= A_n, n = 0,\ldots,l$.

By Theorem 8.7 in \cite{GSS}, the function also has a {\em  spherical  expansion at a point $q_0 \in x_0 +y_0\bS$} of the form
\[
  f(q) = \sum_{n = 0}^{\infty}[(q - x_0)^2 + y_0^2]^{n}[A_{2n} + (q-q_0) A_{2n+1}],
\]
which converges on a Cassini ball $U(x_0 + y_0\bS,R)=\{q \in \mathbb{H}, |(q - x_0)^2 + y_0^2| < R^2\} \subset \Omega$.
We define a {\em spherical jet of order $2l+1$ at $q_0$} to be the sequence $A_0,\ldots,A_{2l+1}$, representing the finite part of spherical expansion. If $A_{2m}$ or $A_{2m+1}$ are the first nonzero coefficients in the above series expansion, then $2m$ is the spherical  multiplicity
of $f$ at $x_0 + y_0\bS$.

If the point $q_0$ on the sphere is not given, then we define {\em a spherical jet of order $2l+1$ on the sphere}  $S(x_0, y_0)$  to be a sequence $A_0,A_1,\ldots A_{2l}, A_{2l+1}$ with $A_{k} = 0$ for odd $k$.

A spherical interpolation theorem (with jets of order $0$) with approximation for entire slice-regular functions on $\H$ was proved in \cite{PV1}.

The next proposition recalls two well known technical results that will be extensively used in the sequel (see, e.g., \cite{GP}).


\begin{proposition}\label{regularitygp1} Let $\Omega \subseteq \H$ be an axially symmetric open set, and let $f, g\in \mathcal{SR}(\Omega)$ be two slice--regular functions. Then
\begin{enumerate}
\item[(a)] the $*$-product $f*g$ is a slice--regular function on $\Omega$;
\item[(b)] if $f$ is slice--preserving, then $f*g=fg=g*f$, i.e, the $*$-product coincides with the pointwise product;
\item[(c)] if $f(q_0) = 0,$ then  $(f*g)(q_0)=0$.
\end{enumerate}
\end{proposition}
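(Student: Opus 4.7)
All three claims reduce to algebraic properties of the multiplication of holomorphic stem functions, driven by the fact that the auxiliary unit $\i \in \H_\C$ is central (it commutes with every element of $\H$) and that the involution $\zeta\colon x+\i y\mapsto x-\i y$ acts trivially on $\H$ and commutes with multiplication. My plan is to establish each item by a short computation at the level of stem functions and then push the conclusion down to $\Omega$ via $\phi_J$ and $\mathcal J$.

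For (a), the product of two $\H_\C$-valued holomorphic functions on $\Omega_{\i}$ is holomorphic: centrality of $\i$ allows the ordinary Leibniz rule for $\bar\partial$ to go through even with non-commutative coefficients. The stem condition passes to products because, writing $A=a+\i b$ and $B=c+\i d$ with $a,b,c,d\in\H$, the centrality of $\i$ gives $AB = (ac-bd)+\i(ad+bc)$, whereas $\overline A\cdot\overline B = (a-\i b)(c-\i d) = (ac-bd)-\i(ad+bc) = \overline{AB}$. Combining $F\circ\zeta=\overline F$ and $G\circ\zeta=\overline G$ then yields $(FG)\circ\zeta=\overline F\,\overline G=\overline{FG}$, so $FG$ is a holomorphic stem function and $f*g=\mathcal J(FG)\in\mathcal{SR}(\Omega)$.

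For (b), the slice-preserving hypothesis on $f$ is equivalent to $F(\Omega_{\i})\subset \R+\i\R$. Since $\R+\i\R$ is the center of $\H_\C$, one gets $FG=GF$ for free, hence $f*g=g*f$. For the identification with the pointwise product on each slice $\C_J$, write $F(z)=a+\i b$ with $a,b\in\R$ and $G(z)=c+\i d$ with $c,d\in\H$. Using $aJ=Ja$, $bJ=Jb$ and $J^2=-1$, a one-line expansion gives $(a+Jb)(c+Jd)=(ac-bd)+J(ad+bc)=\phi_J(FG(z))$, i.e.\ $(fg)(x+Jy)=(f*g)(x+Jy)$.

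For (c), fix $J\in\bS$ with $q_0=x_0+Jy_0$ and set $F(x_0+\i y_0)=A+\i B$, $G(x_0+\i y_0)=C+\i D$ with $A,B,C,D\in\H$. The assumption $f(q_0)=A+JB=0$ gives $A=-JB$. Substituting into
\[
(f*g)(q_0)=\phi_J\bigl((A+\i B)(C+\i D)\bigr)=(AC-BD)+J(AD+BC)
\]
and invoking $J^2=-1$ produces $-JBC-BD+BD+JBC=0$. The only point requiring care is that $B$ must stay on the left of $C$ and $D$ throughout the computation (since $\H$ is non-commutative); but the relation $A=-JB$ already places $B$ on the left, so no reordering across quaternions is needed and the cancellation is clean. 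I expect (c) to be the most subtle of the three, though it is still a direct calculation once the stem-function picture is set up.
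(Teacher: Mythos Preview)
Your argument is correct in all three parts; the stem-function computations are clean, and in part (c) the placement of $J$ and $B$ is handled properly so that the cancellation $-JBC-BD+BD+JBC=0$ goes through without any illicit commutation of quaternions. Note, however, that the paper does not actually prove this proposition: it is presented as a recall of well-known facts with a citation to \cite{GP}, so there is no in-paper proof to compare against. Your approach via direct manipulation of stem functions is precisely the standard one used in that literature.
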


The \emph{imaginary unit function}
 $\cI \colon \H \setminus \R \to \mathbb{S}$
is defined by setting $\cI(q) = I$ if $q \in \mathbb{C}_I^+.$  The function $\cI$ is slice--regular and slice--preserving, but it is not an open mapping and  it is not defined on any slice domain.\\

Consider now an axially symmetric open set  
$\Omega$  and $f \in \mathcal{SR}(\Omega).$ We have already defined the splitting $f = f_0 + f_v,$ where the scalar part
$f_0$ of $f$ is a slice--preserving function.
\begin{definition}
  The function $f\in \mathcal{SR}(\Omega)$ is a {\em vectorial function} if $f = f_v.$
  The set of  vectorial functions on $\Omega$ will be denoted by
  $\cS\mathcal{R}^v(\Omega)$. We have $\mathcal{SR}(\Omega) = \mathcal{SR}_{\R}(\Omega)\oplus\mathcal{SR}^v(\Omega).$
  \end{definition}


 In \cite{GPV} the notion of a vectorial class was introduced (compare Definitions 5.1, 5.2 and 5.3 therein). Here we extend the existing definition.
\begin{definition}\label{vec_class}
  Let $f_v \in \cS\mathcal{R}^v(U)$ and $g_v \in \cS\mathcal{R}^v(U'),$ where $U,U'\subset \H$
are axially symmetric domains  in $\mathbb{H}$ such that $U\cap U'\neq\varnothing$.
Take $\widetilde{p}\subset U\cap U'$; we say that $f_v$ and $g_v$ are \emph{equivalent at $\widetilde{p}$}, in symbols
 $f_v \sim_{\widetilde{p}} g_v$, if there exist an  axially symmetric neighborhood $V_{\widetilde{p}}$ of $\widetilde{p}$, $V_{\widetilde{p}} \subset U\cap U'$, such that  $f_v$ and $g_v$
are linearly dependent over $\cS\mathcal{R}_{\mathbb{R}}(V_{\widetilde{p}})$ in $V_{\widetilde{p}}$. We will denote by $[f_v]_{\widetilde{p}}$ the $\sim_{\widetilde{p}}$ equivalence class whose representative is $f_v$.
\end{definition}

Loosely speaking, a global vectorial class $\omega$ on an open axially symmetric set $\Omega$ is determined by choosing a set of  vectorial functions on an open axially symmetric covering $\mathcal{U} = \{U_{\lambda}\}_{\lambda \in \Lambda}$ of $\Omega$, namely the set
$\mathcal{V}=\{ f_{\lambda} \in \mathcal{SR}^v(U_{\lambda}), U_{\lambda} \in \mathcal{U} \}$, so that if $f_{\lambda}, f_{\mu} \in \mathcal{V}$, then  $[f_{\lambda}]= [f_{\mu}]$ on $U_{\lambda}\cap U_{\mu}$. 
By Remark 4.2 in \cite{GPV}, for each vectorial class $\omega$, we can choose a local representative $f_v$ having neither real nor spherical zeroes. Such representative is called a {\em minimal representative of $\omega$}. We  denote
$\mathcal{SR}_{\omega}(\Omega) := \{ f \in \mathcal{SR}(\Omega), [f_v] = \omega\} .$
Notice that $\mathcal{SR}_{\R} = \mathcal{SR}_{[0]}$.

A direct calculation shows that for $f,g \in \mathcal{SR}_{\omega}(U),$ we have $f*g = g*f$ (see also \cite{AdF1}).

We will prove later (Theorem \ref{GMR}), that every vectorial class on an axially symmetric open set $\Omega$ has a global minimal representative.

\begin{example}
  The class of slice--regular functions on an axially symmetric domain $\Omega$  which preserve the slice $\C_i$ coincides with $\mathcal{SR}_{[i]}(\Omega)$ because each function, which preserves the slice $\C_i$, is of the form $f = f_0 + f_1 i$  and thus defines an embedding of the set of holomorphic functions on symmetric domains into the set of slice--regular functions on axially symmetric domains.
\end{example}

Given a standard basis of $\H,$ the vectorial part of a slice--regular function can be decomposed further
(\cite{GMP}, \cite{CGS}, \cite{AdF1}):
indeed, the next result holds for slice functions in general (see \cite{GMP}).
\begin{proposition}\label{frm}
  Let $\{1, i, j, k\}$
  be the standard basis of $\H$ and assume $\Omega$ is an  axially symmetric domain of $\H$.
  Then the map
\[ (\cS\mathcal{R}_{\mathbb{R}} (\Omega))^4\ni  (f_0,\ f_1,\ f_2,\ f_3 ) \mapsto f_0 + f_1 i + f_2 j + f_3 k \in \cS\mathcal{R}(\Omega)\]
is bijective.
\end{proposition}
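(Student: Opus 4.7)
The plan is to reduce the statement to the analogous (and essentially trivial) decomposition at the level of stem functions. Recall that $\mathbb{H}_{\mathbb{C}}=\mathbb{H}+\mathbf{i}\mathbb{H}$ and that the involution $\zeta$ acts by $\zeta(\mathbf{i})=-\mathbf{i}$ while fixing every quaternion (in particular $1,i,j,k$). As a right $\mathbb{R}_{\mathbb{C}}$-module, $\mathbb{H}_{\mathbb{C}}$ decomposes as
\[
\mathbb{H}_{\mathbb{C}} \;=\; \mathbb{R}_{\mathbb{C}}\oplus \mathbb{R}_{\mathbb{C}}\,i \oplus \mathbb{R}_{\mathbb{C}}\, j \oplus \mathbb{R}_{\mathbb{C}}\, k,
\]
since $\{1,i,j,k\}$ commutes with $\mathbf{i}$ and is an $\mathbb{R}$-basis of $\mathbb{H}$. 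This will be the engine behind the proof.

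\textbf{Surjectivity.} Starting from $f\in\mathcal{SR}(\Omega)$, let $F\colon \Omega_{\mathbf{i}}\to \mathbb{H}_{\mathbb{C}}$ be the (holomorphic) stem function with $\mathcal{J}(F)=f$. Using the decomposition above, write uniquely
\[
F \;=\; F_0 + F_1\,i + F_2\,j + F_3\,k, \qquad F_\ell\colon \Omega_{\mathbf{i}}\to \mathbb{R}_{\mathbb{C}}.
\]
Because $\{1,i,j,k\}$ are $\mathbb{C}$-linearly independent constants, each $F_\ell$ is holomorphic. Applying $\zeta$ componentwise gives $\overline{F(z)}=\sum \overline{F_\ell(z)}\,e_\ell$, while $F(\bar z)=\sum F_\ell(\bar z)\,e_\ell$, so the stem condition $F(\bar z)=\overline{F(z)}$ forces $F_\ell(\bar z)=\overline{F_\ell(z)}$ for each $\ell$. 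Hence each $F_\ell$ is a holomorphic $\mathbb{R}_{\mathbb{C}}$-valued stem function, and $f_\ell:=\mathcal{J}(F_\ell)$ is slice--regular. Since $F_\ell$ takes values in $\mathbb{R}_{\mathbb{C}}$, for $z=u+\mathbf{i}v\in\Omega_{\mathbf{i}}$ and $q=u+Jv$ we get $f_\ell(q)=\phi_J(F_\ell(z))\in \mathbb{C}_J$, so $f_\ell\in \mathcal{SR}_{\mathbb{R}}(\Omega)$. Finally, applying $\phi_J$ to the decomposition of $F$ gives $f=f_0+f_1 i+f_2 j+f_3 k$, which shows surjectivity.

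\textbf{Injectivity.} Given $(f_0,f_1,f_2,f_3)\in(\mathcal{SR}_{\mathbb{R}}(\Omega))^4$, let $F_\ell\colon \Omega_{\mathbf{i}}\to \mathbb{R}_{\mathbb{C}}$ be their holomorphic stem functions (that the stem functions of slice--preserving functions land in $\mathbb{R}_{\mathbb{C}}$ is immediate from the very definition of slice--preservation together with the uniqueness of the stem function). Define
\[
F \;:=\; F_0+F_1\,i+F_2\,j+F_3\,k\colon \Omega_{\mathbf{i}}\longrightarrow \mathbb{H}_{\mathbb{C}}.
\]
Then $F$ is holomorphic (as a linear combination of holomorphic maps with constant coefficients) and satisfies $F(\bar z)=\overline{F(z)}$ (each summand does, and $\zeta$ fixes $i,j,k$). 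Thus $f:=\mathcal{J}(F)\in\mathcal{SR}(\Omega)$, and applying $\phi_J$ termwise yields $f=\sum f_\ell e_\ell$, so the tuple $(f_0,\dots,f_3)$ recovered from $f$ by the surjectivity argument coincides with the given one. By uniqueness of the stem function, injectivity follows.

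\textbf{Main obstacle.} The only subtlety is the careful bookkeeping of the two different ``imaginary units'' present in the picture: the quaternionic units $i,j,k\in \mathbb{H}$, which are fixed by $\zeta$, versus the complexification unit $\mathbf{i}$, which is conjugated by $\zeta$. Once this is clear, the decomposition of $F$ with respect to the basis $\{1,i,j,k\}$ propagates cleanly through holomorphicity and the stem involution, and the bijection follows mechanically.
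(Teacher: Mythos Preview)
Your proof is correct and is precisely the natural stem-function argument: decompose the $\mathbb{H}_{\mathbb{C}}$-valued stem function along the $\mathbb{R}_{\mathbb{C}}$-basis $\{1,i,j,k\}$, check that holomorphicity and the involution condition pass to each component, and observe that $\mathbb{R}_{\mathbb{C}}$-valued stem functions induce exactly the slice--preserving slice--regular functions. The only cosmetic point is that your paragraph labeled ``Injectivity'' actually first verifies that the map is well defined (lands in $\mathcal{SR}(\Omega)$), which logically precedes both directions; you might present that separately and then state injectivity simply as: if $\sum f_\ell e_\ell=0$ then its stem function $\sum F_\ell e_\ell$ vanishes, and linear independence of $\{1,i,j,k\}$ over $\mathbb{R}_{\mathbb{C}}$ forces each $F_\ell=0$.

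As for comparison with the paper: the paper does not supply a proof of this proposition at all, but simply records it and refers to \cite{GMP}, noting that the result holds more generally for slice functions. The argument in \cite{GMP} is carried out in essentially the same stem-function framework you use, so your approach is the expected one.
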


In the sequel, all bases of $\mathbb{H}\cong \mathbb{R}^4$ will be orthonormal (and positively oriented)
with respect to the standard scalar product of $\mathbb{R}^4$.
Proposition \ref{frm} implies that, given any $f, g \in \mathcal{SR}(\Omega)$, there exist unique
$f_l, g_l \in \cS\mathcal{R}_{\mathbb{R}}(\Omega), l = 0,\ldots,3$, such that
\begin{eqnarray*}
f = f_0 + f_1 i + f_2 j + f_3 k=f_0+f_v\\
g=g_0+g_1i+g_2j+g_3k=g_0+g_v
\end{eqnarray*}
With the above given notation, if we call \emph{regular conjugate} of $f$ the function
$f^c=f_0-f_v$ then we have
\begin{equation}\label{e1}
f_0= \frac{f+f^c}{2}.
\end{equation}
Furthermore, using Definition~\ref{stem} and Proposition~\ref{regularitygp}, we obtain the following expression for the $*$-product of $f$ and $g$:
\begin{equation}\label{*product}
  f*g = f_0g_0-f_1g_1-f_2g_2-f_3g_3+
  f_0g_v+g_0f_v+ \dfrac{f_v*g_v-g_v*f_v}{2}
\end{equation}
or, in terms of components with respect to the basis  $\{1, i, j, k\}$,
\begin{equation}\label{**}
  \begin{array}{ccc}
    (f*g)_0&=& f_0g_0-f_1g_1-f_2g_2-f_3g_3\\
    (f*g)_1&=& f_0g_1+f_1g_0+f_2g_3-f_3g_2\\
    (f*g)_2&=& f_0g_2-f_1g_3+f_2g_0+f_3g_1\\
    (f*g)_3&=& f_0g_3+f_1g_2-f_2g_1+f_3g_0F
    \end{array}
\end{equation}
%
We now set
\[f^s:=f_0^2+f_1^2+f_2^2+f_3^2=f*f^c=f^c*f\]
and call $f^s$ the \emph{symmetrization} of $f$.  Let us mention that
the equality (\ref{e1}) immediately implies $|f_0(z)| \leq |f(z)|$ and
replacing $f$ by $f * \iota$ for $\iota = i,j,k$ gives also the
remaining inequalities $|f_l(z) \leq |f(z)|,\, l = 1,2,3.$\\

It is known that Runge theorem holds for slice--preserving functions and for slice--regular functions.
In order to prove the Runge theorem also for $\mathcal{SR}_{\omega}(\Omega)$ (Theorem \ref{RungeV}), we will not only use  Theorem \ref{GMR}, but also the vanishing of $H^1(\Omega,\mathcal{SR}_{\R}^*)$ (Theorem \ref{H^1SRR} (1)) which follows from the vanishing of $H^1(\Omega,\mathcal{SR}_{\R})$ (Theorem \ref{H^1SRR+} (1)).
\begin{theorem}[Runge Theorem] \label{RungeV}
     Let $U\subset \Omega$ be axially symmetric domains, the set $U_I$ Runge in $\Omega_I$ for some $I \in \mathbb S$ and $\omega$  a vectorial class on $\Omega$.
    Then Runge theorem holds in classes (1) $\mathcal{SR}_{\R}(\Omega),$ (2) $\mathcal{SR}_{\omega}(\Omega)$ and (3) for general slice--regular functions.
\end{theorem}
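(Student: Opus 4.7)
My approach is to reduce each of the three statements to the classical Runge theorem on a single complex slice $\Omega_I$, by exploiting the structural decompositions of slice--regular functions established earlier in the paper.

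For (1), let $f\in \mathcal{SR}_{\R}(U)$. Being slice--preserving, its restriction $f|_{U_I}$ is a $\mathbb{C}_I$-valued holomorphic function that is real on $U_I\cap\R$ and hence conjugation--symmetric: $\overline{f(\bar z)}=f(z)$. Because $U_I$ is Runge in $\Omega_I$, the classical Runge theorem produces holomorphic approximants $g_n$ of $f|_{U_I}$ on $\Omega_I$. Replacing $g_n$ by $\tilde g_n(z):=\tfrac12\bigl(g_n(z)+\overline{g_n(\bar z)}\bigr)$ preserves the approximation (as $f$ is itself symmetric) and yields a $\mathbb{C}_I$-valued, conjugation--invariant holomorphic function on $\Omega_I$. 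The representation formula then extends $\tilde g_n$ to a slice--preserving slice--regular function $\hat g_n\in \mathcal{SR}_{\R}(\Omega)$, and uniform convergence $\hat g_n\to f$ on axially symmetric compact subsets of $U$ is transported from $U_I$ by the Lipschitz bounds built into the representation formula. Case (3) is then immediate via Proposition~\ref{frm}: decompose $f=f_0+f_1 i+f_2 j+f_3 k$ with $f_l\in \mathcal{SR}_{\R}(U)$, apply (1) componentwise, and reassemble.

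The substantive content is case (2). By Theorem~\ref{GMR}, the class $\omega$ admits a global minimal representative $\Phi_v\in \mathcal{SR}^v(\Omega)$, free of real and spherical zeros. Given $f\in \mathcal{SR}_\omega(U)$, the definition of $\omega$ provides, on a neighborhood $V$ of each point of $U$, a factor $h_V\in \mathcal{SR}_{\R}(V)$ with $f_v=h_V\Phi_v$ on $V$; on overlaps, the identity $(h_V-h_{V'})\Phi_v=0$ combined with the discreteness of the zero set of $\Phi_v$ and the identity principle forces $h_V=h_{V'}$ wherever both are defined as honest slice--regular functions. The delicate point is that isolated nonreal zeros of $\Phi_v$ at which $f_v$ fails to vanish to matching order threaten to produce semiregular, rather than slice--regular, local quotients; here the vanishing $H^1(\Omega,\mathcal{SR}_{\R}^*)=0$ (Theorem~\ref{H^1SRR}(1)) is the ingredient that solves the associated multiplicative Cousin problem and lets one assemble the $h_V$ into a genuine global $h\in \mathcal{SR}_{\R}(U)$ satisfying $f=f_0+h\,\Phi_v|_U$. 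With this global factorization in hand, apply case (1) separately to $f_0$ and $h$ to obtain sequences $\hat f_0^{(n)},\hat h^{(n)}\in \mathcal{SR}_{\R}(\Omega)$, and set $F_n:=\hat f_0^{(n)}+\hat h^{(n)}\Phi_v$; these satisfy $F_n\in \mathcal{SR}_\omega(\Omega)$ and $F_n\to f$ uniformly on compact subsets of $U$, since $\Phi_v$ is bounded on compacts.

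\emph{Principal obstacle.} The crux is the global factorization $f=f_0+h\,\Phi_v|_U$ in case (2): Theorem~\ref{GMR} disposes of the real and spherical zeros of the representative, but isolated nonreal zeros of $\Phi_v$ may still obstruct a direct pointwise quotient. This is precisely where the Cousin II vanishing $H^1(\Omega,\mathcal{SR}_{\R}^*)=0$ intervenes; once the factorization is established, all three assertions collapse to the classical Runge theorem on $\Omega_I$ combined with symmetrization and the representation formula.
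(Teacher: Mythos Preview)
Your treatment of (1) and (3) matches the paper's: the paper simply remarks that these follow from classical complex analysis, and your symmetrization argument for (1) together with the componentwise decomposition of Proposition~\ref{frm} for (3) is exactly what is intended.

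For (2) your overall strategy is also the paper's: invoke Theorem~\ref{GMR} to get a global minimal representative $\Phi_v$, write $f=f_0+h\,\Phi_v$ with $h\in\mathcal{SR}_{\R}(U)$, and approximate $f_0,h$ via case~(1). But you have misidentified where $H^1(\Omega,\mathcal{SR}_{\R}^*)=0$ enters. In the paper's logical chain this vanishing is used \emph{upstream}, inside the proof of Theorem~\ref{GMR}, to produce the global minimal representative $\Phi_v$ in the first place. Once $\Phi_v$ exists, the factorization $f_v=h\,\Phi_v$ is not a Cousin problem and requires no cohomology: minimality of $\Phi_v$ forces the local linear-dependence relation $\alpha f_v+\beta\Phi_v=0$ (with $\alpha,\beta\in\mathcal{SR}_{\R}$) to reduce, after cancelling common real and spherical factors of $\alpha,\beta$, to one with $\alpha$ \emph{nonvanishing}; hence $h_V=-\beta/\alpha$ is genuinely slice--regular on every basic neighbourhood, including those meeting the zero set of $\Phi_v$. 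Your own overlap argument $(h_V-h_{V'})\Phi_v=0\Rightarrow h_V=h_{V'}$ then glues the $h_V$ by the sheaf axiom alone.

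The scenario you flag as the ``delicate point'' --- $\Phi_v(q_0)=0$ while $f_v(q_0)\ne 0$ --- cannot occur for $f\in\mathcal{SR}_\omega$ when $\Phi_v$ is minimal; and even if it could, a vanishing $H^1$ would not help, since a pole of $h_V$ is a \emph{local} obstruction that no amount of global gluing can remove. So the invocation of Theorem~\ref{H^1SRR}(1) at this stage is both unnecessary and would not accomplish what you claim.
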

The proofs of (1) and (3) follow from classical complex analysis results.
Runge theorem holds also for nonvanishing slice--regular functions.
\begin{theorem}[Runge theorem for $\mathcal{SR}^*$, Theorem 3.8 \cite{PV2}]\label{RungeM}
Let $K$ be a union of finitely many disjoint closed basic sets in $\H$ and let $f\in \mathcal{SR}^*(\Omega)$ with $\Omega$ an open axially symmetric neighborhood of $K$.
Then, for any $\varepsilon>0$, there exists $g\in {\mathcal{SR}^*}(\H)$ such that
\[ |f-g|_{K}<\varepsilon.\]
\end{theorem}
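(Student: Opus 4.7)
My strategy is the quaternionic analogue of the classical complex proof of Runge's theorem for nonvanishing functions: locally write $f$ as a $*$-exponential, apply the already-established Runge theorem for general slice--regular functions (Theorem \ref{RungeV}(3)) to the exponent, and then exponentiate back to obtain an entire nonvanishing approximant.

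Decompose $K=K_1\sqcup\cdots\sqcup K_n$ into disjoint closed basic sets, and shrink $\Omega$ to a disjoint union of axially symmetric basic open neighborhoods $\Omega_j\supset K_j$ on which $f$ remains slice--regular and nowhere zero. On each $\Omega_j$, the function $f$ lies in the vectorial class $\omega_j:=[f_v|_{\Omega_j}]$. Since the $*$-product is commutative inside $\mathcal{SR}_{\omega_j}(\Omega_j)$ (cf.\ the remark just after Definition \ref{vec_class}), the $*$-exponential restricted to this subclass satisfies all the familiar commutative-ring identities. Using the simple connectedness of the disc $D_j\subset\mathbb C_I$ whose symmetrization is $\Omega_j$, I construct $h_j\in\mathcal{SR}_{\omega_j}(\Omega_j)$ with $\exp_*(h_j)=f|_{\Omega_j}$: take a slice antiderivative of the logarithmic derivative $f'*f^{-*}\in\mathcal{SR}_{\omega_j}(\Omega_j)$, which is path--independent by simple connectedness, and normalize its additive constant at a point of $\Omega_j\cap\R$ so that $\exp_*(h_j)$ agrees with $f$ there; a uniqueness argument for the first order linear equation $y'=(f'*f^{-*})*y$ then forces $\exp_*(h_j)\equiv f$ on $\Omega_j$. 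Patching on the disjoint union yields $h\in\mathcal{SR}(\Omega)$ with $\exp_*(h)=f$.

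By Theorem \ref{RungeV}(3), for any $\delta>0$ there exists $H\in\mathcal{SR}(\H)$ with $|H-h|_K<\delta$. Set $g:=\exp_*(H)$. Since $H$ commutes with $-H$ under the $*$-product, $\exp_*(H)*\exp_*(-H)=\exp_*(0)=1$, hence $g\in\mathcal{SR}^*(\H)$. A standard continuity estimate coming from the Taylor series of $\exp_*$, together with the uniform boundedness of $h$ and $H$ on the compact set $K$, yields $|g-f|_K\le C\,|H-h|_K$ with $C$ depending only on $|h|_K$; choosing $\delta$ small enough produces $|f-g|_K<\varepsilon$.

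The principal difficulty is the construction of the local $*$-logarithm. The $*$-exponential is genuinely non-commutative on all of $\mathcal{SR}$, and it is only by confining oneself to a single vectorial class that the classical identities $\exp_*(h)'=h'*\exp_*(h)$ and $\exp_*(h_1+h_2)=\exp_*(h_1)*\exp_*(h_2)$ become available. Once inside $\mathcal{SR}_{\omega_j}$, the remaining issue is to show that the antiderivative of $f'*f^{-*}$ is single--valued; this is where the topological triviality of a basic set (equivalently, the simple connectedness of its generating disc in $\mathbb C_I$) is really used. After the local $*$-logarithm is in hand, the Runge approximation of $h$ and the continuity of the $*$-exponential make the remaining steps routine.
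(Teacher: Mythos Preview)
Your construction of the local $*$-logarithm has a genuine gap. You claim that the logarithmic derivative $f'*f^{-*}$ lies in $\mathcal{SR}_{\omega_j}(\Omega_j)$ and that, once inside this class, the commutative identities $(\exp_* h)'=h'*\exp_* h$ and $\exp_*(h_1+h_2)=\exp_*(h_1)*\exp_*(h_2)$ become available. But the slice derivative does \emph{not} preserve the vectorial class: if $g_v(q)=i+qj$ is a minimal representative of some class $\omega$, then $g_v'(q)=j$, and one checks directly that $[j]\ne[i+qj]$. Hence for $f=f_0+f_1g_v\in\mathcal{SR}_{\omega_j}$ one has $f'=f_0'+f_1'g_v+f_1g_v'$, and the last term generically lies outside the class $\omega_j$. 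This breaks both the claimed membership $f'*f^{-*}\in\mathcal{SR}_{\omega_j}$ and the differential identity you need in order to verify $\exp_*(h_j)\equiv f$ via the uniqueness argument for $y'=(f'*f^{-*})*y$. A further, smaller issue: your normalization ``at a point of $\Omega_j\cap\R$'' tacitly assumes each $\Omega_j$ is a slice domain, whereas a closed basic set may be of product type, in which case $\Omega_j\cap\R=\varnothing$.

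The paper itself flags exactly this obstruction. In the proof of Lemma~\ref{MCSL0} it is noted that for general $c\in\mathcal{SR}^*$ ``in general $\log_* c$ may not exist on $U_C$'', and for this reason the $\mathcal{SR}$ case of the multiplicative splitting lemma is handled by an iterative matrix argument (Theorem~\ref{MCSLM}) rather than by passing to logarithms. The present theorem is quoted from~\cite{PV2} and is not proved in this paper; in any case the logarithmic-derivative route, as you have written it, does not go through without a substantially different construction of the local $*$-logarithm.
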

\begin{remark} Any finite  union $K$ of disjoint closed basic sets is Runge in any axially symmetric open set $\Omega \supset K$.
\end{remark}

\begin{remark} Runge Theorem \ref{RungeM} holds also if  the group $\mathcal{SR}^*(\Omega)$
is replaced by some other closed and connected subgroup, where Runge theorem is available, for example
the  subgroup  $\mathcal{SR}^*_{\omega}(\Omega).$
\end{remark}

\subsection{Matrix representations}\label{MatRep}

As complex numbers can be represented by matrices, so can  be quaternions.
There are at least two ways of representing quaternions as matrices in
such a way that quaternion addition and multiplication correspond to
matrix addition and matrix multiplication. One is to use $2\times 2$ complex
matrices, and the other is to use $4 \times  4$ real matrices.

In the terminology of abstract algebra, these are
injective homomorphisms from $ \mathbb {H} $
to the matrix rings $M(2,\mathbb{C})$ and $M(4,\mathbb{R})$, respectively.
Furthermore{\color{red},} there is a strong relation between quaternion units and Pauli matrices
$\sigma_1, \sigma_2, \sigma_3$
(which are  $2 \times 2$ complex Hermitian unitary matrices and appear in quantum mechanics)
since the real linear span of $\{\Id_2,\ i\sigma_1,\ i\sigma_2,\ i\sigma_3\}$ is isomorphic to the real algebra of quaternions.

Our aim is to provide any slice function with a matrix representation
whose entries are slice-preserving functions in such a way that the
$*$--product of slice functions will correspond to the matrix product of corresponding functions in the same spirit as \cite{AdF3}.

To this end we identify

\[ \mathbf{1}=\Id_4=\begin{bmatrix}1&0&0&0\\0&1&0&0\\0&0&1&0\\0&0&0&1\end{bmatrix} \leftrightarrow 1
\quad\mathbf{I}=\begin{bmatrix}0&-1&0&0\\1&0&0&0\\0&0&0&-1\\0&0&1&0\end{bmatrix} \leftrightarrow i\]

\[\mathbf{J}=\begin{bmatrix}0&0&-1&0\\0&0&0&1\\1&0&0&0\\0&-1&0&0\end{bmatrix}\leftrightarrow j
\quad\mathbf{K}=\begin{bmatrix}0&0&0&-1\\0&0&-1&0\\0&1&0&0\\1&0&0&0\end{bmatrix} \leftrightarrow k\]
since $\mathbf{I}^2=\mathbf{J}^2=\mathbf{K}^2=-\mathbf{1}$ and $\mathbf{I}\cdot \mathbf{J}=\mathbf{K}$ and so
\[f\leftrightarrow f_0 \mathbf{1}+f_1\mathbf{I}+f_2\mathbf{J}+f_3\mathbf{K}
\ \mathrm{or}\
f\leftrightarrow
\begin{bmatrix} f_0 &-f_1 & -f_2& -f_3\\
  f_1& f_0       &-f_3     &f_2           \\
  f_2&  f_3  & f_0 &  -f_1  \\
  f_3&  -f_2  &   f_1  & f_0
\end{bmatrix}=: M_f  .
\]
Thus, if analogously $g\leftrightarrow M_g=\begin{bmatrix} g_0 &-g_1 & -g_2& -g_3\\
  g_1& g_0       &-g_3     &g_2           \\
  g_2&  g_3  & g_0 &  -g_1  \\
  g_3&  -g_2  &   g_1  & g_0
\end{bmatrix} $, then

\[f*g\leftrightarrow M_{f*g}= M_f\cdot M_g\]
as desired (see also (\ref{**})).
Notice furthermore that $f^c\leftrightarrow M_f^T$,
where $M_f^T$ is the transpose of $M_f$, and so

\[ M_{f^s}=M_{f*f^c}=M_f\cdot M_f^T=\begin{bmatrix}
f^s&0&0&0\\
0&f^s&0&0\\
0&0&f^s&0\\
0&0&0&f^s
\end{bmatrix}=f^s \mathbf{1}.
\]
Moreover, if $f^s\not\equiv 0$, then $f^{-*}:= f^c/f^s$ is such that $f^{-*}*f=f*f^{-*}=\Id$, that is
\[M_{f^{-*}}\cdot M_f= M_f\cdot M_{f^{-*}}= \mathbf{1}.\]
 Because $\det M_f = (f^s)^2$ we  observe, as expected,
that $f$ has a $*$-inverse if and only if $f^s$ is not vanishing, i.e. $M_f$ is an invertible (or nonsingular) matrix.

Let $\iota \in \mathbb S$ 
and $\C_{\iota}$ the induced complex plane. Define $\mathcal M_{\C_{\iota}}$ to be the set of matrices
\[\mathcal M_{\C_{\iota}}= \{M=q_0 \mathbf{1}+q_1\mathbf{I}+q_2\mathbf{J}+q_3\mathbf{K}, q_0,\ldots,q_3 \in \C_{\iota} \}\] and
equip it with 
the operator norm 
$|A|_{\infty}:= \max_{i=0,..,3} \sum_{j=0}^3 |a_{ij}|.$

Because for our class of matrices the sums of absolute values of rows are the same, the induced norm satisfies
\[ |M| = \sum_{j=0}^3 |q_{j}| = |M^T| = |M|_{\infty} = |M|_1.\]
Notice that
$\mathcal M_{\R}= \{r_0 \mathbf{1}+r_1\mathbf{I}+r_2\mathbf{J}+r_3\mathbf{K}, r_0,\ldots,r_3 \in \R\} \simeq\H.$

\begin{definition} The set of all matrices $M_f$ with entries
slice--preserving slice functions defined in $\Omega\subseteq \H$ is
denoted by $\mathcal{M}_{\mathcal{S}_{\R}}(\Omega)$ and the subgroup of
non singular ones by $\mathcal{GL}_{\mathcal{S}_{\R}}(\Omega)$.
The subgroup of matrices with slice--regular slice--preserving functions is denoted by $\mathcal{M}_{\mathcal{SR}_{\R}}(\Omega)$ and the subgroup of invertible ones by $\mathcal{GL}_{\mathcal{SR}_{\R}}(\Omega).$
\end{definition}


Notice that if $z\in\Omega_I \setminus \R$, then  $M_f(z)\in\mathcal{M}_{\C_I}$ and for $z \in \Omega \cap \R$ we have $M_f(z)\in\mathcal{M}_{\R.}$
The inequalities
  $|f(z)| \leq |M_f(z)| \leq 4 |f(z)| $ hold and in addition $|M_f(z)| = |M_{f^c}(z)|.$
We equip the  set $\mathcal{M}_{\mathcal{S}_{\R}}(\Omega)$
  with the sup norm
\[ |M_f|_{\Omega}:=  \sup_{z\in\Omega}|M_f(z)|.\]
 The subaditivity, submultiplicativity and homogeneity of this norm are evident.

As customary, one puts, $M_f^n:=M_f\cdot M_f^{n-1}$ with $M_f^0:=\mathbf{1}$; therefore,
one can define
\[
\exp (M_f):= \sum_{n=0}^{+\infty} \dfrac{M_f^n}{n!} \,\mbox{ and }\, \log({\mathbf 1}-M_f):
  =-\sum_{n=1}^{+\infty} \dfrac{M_f^n}{n}\]
and observe that
\[ \exp{M_f}\leftrightarrow \exp_*(f)\,\mbox{ and }\, \log({\mathbf 1}-M_f)\leftrightarrow \log_*(1-f)\]
as introduced in \cite{AdF1, AdF2, GPV}. Notice that 
%
%
%
%
the series for exponential function converges for any matrix $M_f$
whereas the series for logarithm
 converges in $\Omega$  for all matrices $M_f$ satisfying $|M_f|_{\Omega}<1$.
Since $\mathbf{1}$ commutes with $M_f^n$ (for any $n$), one easily deduces that
\[ \exp(\log (\mathbf{1}-M_f))=\mathbf{1}-M_f,\quad  \exp (M_f) \cdot \exp (-M_f)=\mathbf{1}.\]
The second equality proves that $\exp(M_f)$ is nonsingular for any matrix $M_f$.
From the first equality, if $|\mathbf{1}-M_f|<1$, then $M_f=\log (M_g)$ with $M_g=\exp(M_f)$.

 Assume that $f = f_0 + f_v = f_0 + f_1 v \in \mathcal{SR}_{[v]}(U)$ with $U$ an axially symmetric open set and $v^2 = -v^s = -1$, $f_0,f_1 \in \mathcal{SR}_{\R}(U)$. Then $v$ is a minimal representative of $[v]$ and  we can represent $f$ by a matrix
\[M_f:=\begin{bmatrix}f_0& -f_1\\
f_1& f_0
\end{bmatrix}.\]
Moreover,  $\det M_f = f^s,$ so the matrix is invertible if and only if $f$ is nonzero. If also $g \in \mathcal{SR}_{[v]}(U),$ then $M_{f*g} = M_fM_g = M_gM_f$, which implies that $\exp(M_f + M_g) = (\exp M_f) (\exp M_g)$.\\

\section{Sheaf theory for slice--regular functions}\label{sheaves}

The notion of a sheaf of slice--regular functions was already studied in the paper \cite{css}. Here we give a slightly different definition of sheaves.

Following Propositions $4.2$ and $4.3$ in \cite{css} we have
\begin{proposition}\label{pASP} The assignment $\{U,\cS\cR(U)\}_{U  = \widetilde U \subset \H} $ of rings of slice--regular functions over all axially symmetric open sets $U$ in $\H$
together with the families of morphisms
$\rho_{U,V}: \mathcal{SR}(U) \ra \mathcal{SR}(V)$
for every $V \subset U,$ $U,V$ axially symmetric,
induced by restrictions, is a presheaf. The presheaf $\{U,\cS\cR(U)\}_{U  = \widetilde U \subset \H}$ is a sheaf denoted by 
$\mathcal{SR}$.

The same holds true for the assignments $\{U,\mathcal{SMR}(U)\}_{U  = \widetilde U \subset \H}$ and $\{U,\cS(U)\}_{U  = \widetilde U \subset \H}$ over all $U$ axially symmetric open sets. The corresponding sheaves are denoted by $\mathcal{SMR}$ and $\mathcal{S}$ respectively.

\end{proposition}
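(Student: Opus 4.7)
My plan is to exploit the bijection $\mathcal{J}$ between slice functions on an axially symmetric open set $U \subset \H$ and $\zeta$--invariant continuous (respectively holomorphic, respectively meromorphic) $\H_{\C}$--valued stem functions on $U_{\i} \subset \H_{\C}$. This reduces all the sheaf axioms for $\mathcal{SR}$, $\mathcal{S}$, and $\mathcal{SMR}$ to the familiar sheaf axioms for the corresponding classes of stem functions on open subsets of $\R + \i\R \cong \C$, which are standard.

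For the presheaf axioms, if $V \subseteq U$ are both axially symmetric then $V_{\i} \subseteq U_{\i}$, and for any $f \in \mathcal{SR}(U)$ induced by a stem function $F : U_{\i} \to \H_{\C}$, the restriction $F|_{V_{\i}}$ is still holomorphic and $\zeta$--invariant, so $f|_V = \mathcal{J}(F|_{V_{\i}}) \in \mathcal{SR}(V)$. The identity axiom $\rho_{U,U} = \mathrm{id}$ and transitivity $\rho_{V,W} \circ \rho_{U,V} = \rho_{U,W}$ are then tautological. The same verification works verbatim for $\mathcal{S}$ (drop holomorphy) and for $\mathcal{SMR}$ (restriction preserves the polar locus).

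For the sheaf axioms, let $\{U_\lambda\}_{\lambda \in \Lambda}$ be an axially symmetric open covering of $U$. Separation is immediate: if $f \in \mathcal{SR}(U)$ restricts to zero on every $U_\lambda$, then $f$ vanishes pointwise on $\bigcup_\lambda U_\lambda = U$. For gluing, take a compatible family $\{f_\lambda \in \mathcal{SR}(U_\lambda)\}$; the compatibility condition is meaningful because $U_\lambda \cap U_\mu$ is again axially symmetric. Lifting each $f_\lambda$ to its unique stem function $F_\lambda : (U_\lambda)_{\i} \to \H_{\C}$, uniqueness in $\mathcal{J}$ applied over $U_\lambda \cap U_\mu$ forces $F_\lambda = F_\mu$ on $(U_\lambda)_{\i} \cap (U_\mu)_{\i} = (U_\lambda \cap U_\mu)_{\i}$. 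The $F_\lambda$ thus glue to a single holomorphic function $F : U_{\i} \to \H_{\C}$ on $U_{\i} = \bigcup_\lambda (U_\lambda)_{\i}$; the identity $F \circ \zeta = \overline{F}$ holds locally and hence globally. Then $f := \mathcal{J}(F) \in \mathcal{SR}(U)$ restricts to $f_\lambda$ on each $U_\lambda$. For $\mathcal{S}$, continuity replaces holomorphy; for $\mathcal{SMR}$ one additionally verifies that the union of local pole sets remains discrete in $U_{\i}$, which follows by local finiteness.

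The step I expect to be most delicate is the $\mathcal{SMR}$ case, where the glued stem function is only meromorphic and one must track the polar set under both gluing and the involution $\zeta$. Because each pole is by definition isolated, and axial symmetry of the covering together with $\zeta$--invariance of each $F_\lambda$ forces the global pole set to be $\zeta$--invariant and discrete, this obstruction is ultimately mild: it reduces to the observation that a union of locally discrete, locally $\zeta$--invariant subsets is discrete and $\zeta$--invariant. The real content of the proposition is packaged inside the bijection $\mathcal{J}$, which is why the sheaf property essentially comes for free once one works consistently with axially symmetric open sets.
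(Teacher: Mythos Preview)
Your argument is correct. The paper itself does not give a proof of this proposition: it simply states the result ``following Propositions 4.2 and 4.3 in \cite{css}'' and moves on. Your route via the bijection $\mathcal{J}$ between slice functions on an axially symmetric $U$ and $\zeta$-equivariant stem functions on $U_{\i}$ is the natural one, and it reduces everything to the classical sheaf axioms for continuous, holomorphic, or meromorphic $\H_{\C}$-valued functions on open subsets of $\C$. The only place to be slightly more careful than you were is the $\mathcal{SMR}$ case: the paper's Definition of semiregularity is stated for symmetric slice domains, so strictly speaking one should say a word about what $\mathcal{SMR}(U)$ means for a general axially symmetric open $U$ (work componentwise, or on the stem side as you do). Your observation that discreteness of the pole set is a local property handles the gluing axiom there.
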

 In this manner, the stalks of the sheaf are germs of functions defined on the elements of the set $\mathcal{H}:=\{\widetilde{z}, \, z \in \H\}$ and denoted by
$f_{*,\widetilde{z}}$. Notice that $\mathcal{H} = \H/\sim,$ where the relation is $z \sim w \Leftrightarrow \,\exists \,a, r \in \R \mbox{ so that } z,w \in S(a,r)$. Recall that the symmetrization of any real point $x$  is the set $\widetilde{x} = \{x\}$. By a slight abuse of notation, in such a case,  we simply write $x$.

\begin{remark} Let us explain what is the main difference between the definition
of sheaves
in \cite{css} and the one in Proposition \ref{pASP}.
 In \cite{css} the authors have supposed that the open sets are arbitrary, but the functions 
  considered
  were 
  assumed to be
   extendable to the symmetrization of the open set where they are defined.
The problem that we encountered with this approach was, that when  we have three non axially symmetric sets $U_1,U_2,U_3,$
such that $U_1\cap U_2\cap U_3:=U_{123} = \varnothing$ and such that their symmetrizations
$\tilde{U}_1,\tilde{U}_2,\tilde{U}_3,$ are the same set $U,$ then any
choice of functions $f_{lm}$ in $U_{lm}$ which extend to the
symmetrization of $U_{lm}$, satisfying  only the  condition $f_{lm} =
-f_{ml},$ $1\leq l< m\leq 3$,  forms a cocycle. While the cocycle condition
$f_{123}=f_{12}+f_{23}+f_{31}$
is trivial
on the sets $U_{123}=\varnothing$, this is no longer true so for the corresponding symmetrized
sets. Therefore the cocycles do not `symmetrize' to cocycles.

Let us make an explicit example. Assume
$U_0:= \{q \in \H, d(q,\bS) < 1/2\}. $ Let $C$ be the circle $i \cos t
+ j \sin t, t \in [0,2\pi].$ Divide the circle into arcs $C_l:= [(l-1)
  2\pi/2,l 2\pi/3], l = 1,2,3$ and let $S_l$ be tubular neighbourhoods
of $C_l$ in $\bS$ such that $S_1 \cap S_2 \cap S_3 = \varnothing.$ Since
the set $U_0$ does not intersect the real axis, the imaginary unit
function $\cI$ is well--defined on $U_0$ and hence the sets $U_l = \{q
\in U_0, \Im{q} \in S_l\}, l = 1,2,3$ form an open covering of
$U:=U_1 \cup U_2 \cup U_3$. By construction,  $U_{123} = \varnothing$. Moreover, the symmetrization of each of
the sets $U_1, U_1, U_3$ is $U_0.$
 Set $f_{lm} = 1,$ $f_{mm} = 0$
and $f_{lm} = -f_{ml}, 1 \leq l < m \leq 3$ and denote the symmetrizations by
$\tilde{f}_{lm}.$ Then $\tilde{f}_{12} + \tilde{f}_{23}+
\tilde{f}_{31} = 1 \ne 0.$ We claim that $H^1(U,\mathcal {SR}) \ne 0$ in the sense of \cite{css}.
If it were zero, then we would have functions $f_l$ on $U_l$ with $f_{lm}=f_m
- f_l = 1, l < m $ and hence, by the identity principle,
the same would hold for the symmetrized
functions $\tilde{f}_l,$ $\tilde{f}_{lm} = \tilde{f}_m - \tilde{f}_l =
1, l < m.$ But then
$$
 1 = \tilde{f}_{12} + \tilde{f}_{23}+ \tilde{f}_{31} = \tilde{f}_2 - \tilde{f}_1+ \tilde{f}_3 - \tilde{f}_2+ \tilde{f}_1 - \tilde{f}_3 =0,
$$
which is a contradiction.
To put it shortly, the symmetrization process and cocycle condition are not commutative operations.\\

With the definition we have adopted this problem does not occur. At
first glance the requirement for symmetric coverings seems to be a
restriction since the sets we are dealing with are not small in
diameter. On the other hand, the set of functions considered in
\cite{css} is also restricted by the requirement that they are extendable
to the symmetrized set. On the level of presheaves both definitions
are about the same set of functions. The difference occurs on the
level of cocycles.
\end{remark}


Recall that $\mathcal{SR}_{\R} = \mathcal{SR}_{[0]}$ and  $\mathcal{SR}_{\omega}$ is an
$\mathcal{SR}_{\mathbb{R}}$-module. The sheaves
$\mathcal{SR}_{\mathbb{R}}$ and $\mathcal{SR}_{\omega}$ are subsheaves of $\mathcal{SR}$. All these three sheaves are also sheaves of abelian groups for addition. Notice that $\mathcal{SR}_{\mathbb{R}} \subset
\mathcal{SR}_{\omega}\subset
\mathcal{SR}$ for  any $\omega.$

   In addition, we define
  $\mathcal{SR}_{\R}^{+}\subset \mathcal{SR}_{\R}$  to be the subset of germs $f_{\widetilde{z}} \in\mathcal{SR}_{\R}$  
   which
   at the point $z_0 \in \R$ satisfy the condition
\begin{equation}\label{cond1}
f_{*,z_0}(z_0) \in  (0,\infty).
\end{equation}
Also $\mathcal{SR}_{\R}^{+}$ is a subsheaf of $\mathcal{SR}_{\R}$.

As customary, by
  $\mathcal{SR}^*$ we denote the set of all nonvanishing germs $f_* \in \mathcal{SR} $  and define
    $\mathcal{SR}_{\R}^{*,+} :=\mathcal{SR}^+_{\R}\cap \mathcal{SR}^*  $,
    $\mathcal{SR}_{\R}^* :=\mathcal{SR}_{\R}\cap \mathcal{SR}^*  $,
    $\mathcal{SR}_{\omega}^*: = \mathcal{SR}_{\omega} \cap \mathcal{SR}^*.$  The sheaves  $\mathcal{SR}_{\R}^*$ and   $\mathcal{SR}_{\omega}^*$ are sheaves of abelian groups for $*$-multiplication.
We also set $\mathcal{SMR}^*:=\mathcal{SMR} \setminus\{0\}.$

\begin{definition}

In $\mathcal{SMR}$ two
germs $f_*, g_* \in \mathcal{SMR}$ are {\em equivalent}, $ f_*\sim g_*$, if there exists $h_* \in \mathcal{SR}^*$ so that $f_* = g_* * h_*.$

A {\em divisor} on an axially symmetric domain $\Omega$ is an element in $\Gamma(\Omega,\mathcal{SMR}^*/\sim).$
\end{definition}
Given a divisor on an axially symmetric domain $\Omega$, there exists a basic covering $ \{U_l\}_{l \in \bN} $
of $\Omega$
so that the divisor is represented by  the pairs $\{(f_{l}, U_{l}),\  f_l \in \mathcal{SMR}(U_l)\}_{l \in \bN}$ satisfying $f_{kl} = f_k^{-*}*f_l   \in \mathcal{SR}^*(U_{kl}), k,l \in \bN.$

\begin{remark}
The support of a divisor on an axially symmetric domain $\Omega$
 is a (discrete) set of points and spheres in $\Omega$.
\end{remark}

We recall the following (see \cite{GPV}).

\begin{proposition}\label{TTT} Let $\omega  = [v]$  be a  vectorial class on an axially symmetric domain $\Omega$  such that $v^s \ne 0.$  The exponential maps between sheaves induced by $\exp_*,$
\begin{eqnarray}
  &&\mathcal{E}_* : \mathcal{SR} \ra \mathcal{SR}^* \label{notkernel}\\
  &&\mathcal{E}_* : \mathcal{SR}_{\omega} \ra \mathcal{SR}_{\omega}^* \nonumber \\
  &&\mathcal{E}_* : \mathcal{SR}_{\R} \ra \mathcal{SR}_{\R}^{*,+} \nonumber
\end{eqnarray}
are surjective and the latter two are also   sheaf  homomorphisms.

 If $v$ is a minimal representative of $\omega$ on a basic neighbourhood $U$ of $z$ in $\Omega$, then the kernels ${\mathcal K}_{\omega}$ and ${\mathcal K}_{\R}$
have the following local description:
\begin{itemize}
  \item [i)]
  if  $z \not \in \R$ and $ v^s(z) \ne 0
       \Rightarrow {\mathcal K}_{\omega, z} =\{ (2k + \delta)\pi \cI + (2n + \delta)\pi \frac{v}{\sqrt{v^s}},\, k,n \in \Z, \delta = 0,1 \},$\\
\item[ii)] if $ z \in \R$  and $ v^s(z) \ne 0 \Rightarrow {\mathcal K}_{\omega, z} = \{  2n \pi \frac{v}{\sqrt{v^s}},\, n \in \Z\},$\\
  \item[iii)]if $ z \not \in \R$  and $v^s(z) = 0\Rightarrow {\mathcal K}_{\omega, z} = \{ 2k \pi \cI,\, k \in \Z\},$\\
   \item[iv)] if $z \not \in \R$, ${\mathcal K}_{\R,z} = \{ 2k \pi \cI, \,k \in \Z\}$    and   if $ z \in \R$, ${\mathcal K}_{\R,z} = \{ 0\}$.
 \end{itemize}
 Since $\mathcal{SR} = \cup_{\omega}\mathcal{SR}_{\omega}$ we have for each $z \in \Omega$
 $$
   {\mathcal K}_{z} := \mathcal{E}_*^{-1}(1) = \cup_{{\omega}}{\mathcal K}_{\omega,z},
 $$
 with $\mathcal{E}_*^{-1}(1)$ denoting the inverse image of $1$ in (\ref{notkernel}).
\end{proposition}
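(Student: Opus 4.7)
My plan is to work on a basic neighborhood $U$ of $\widetilde z$ on which $v$ is a minimal representative of $\omega$, reducing everything to elementary computations via the $2\times 2$ matrix representation of Section~\ref{MatRep}. When $v^s(z)\neq 0$, I first shrink $U$ so $v^s$ is nonvanishing on $U$ and replace $v$ by $v/\sqrt{v^s}$ (the square root lives in $\mathcal{SR}_\R^{*,+}$, which I handle first in the surjectivity step), giving $v^s\equiv 1$ on $U$. Then any $f=f_0+f_1 v\in\mathcal{SR}_\omega(U)$ corresponds to the matrix $M_f=f_0\mathbf{1}+f_1 V$ with $V^2=-\mathbf{1}$ commuting with $f_0\mathbf{1}$, and Euler's formula gives the key identity
\[
\exp_*(f_0+f_1 v)=e^{f_0}\bigl(\cos f_1+\sin f_1\cdot v\bigr).
\]
The homomorphism property is then immediate: elements of $\mathcal{SR}_\omega$ commute under $*$ (as noted after Definition~\ref{vec_class}), so $M_f,M_g$ commute and $\exp(M_f+M_g)=\exp(M_f)\exp(M_g)$ transfers to $\exp_*(f+g)=\exp_*(f)*\exp_*(g)$. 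Noncommutativity of $*$ on general $\mathcal{SR}$ prevents the analogous statement for the first map, in agreement with the proposition.

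\textbf{Surjectivity.} I first handle $\mathcal{SR}_\R\to\mathcal{SR}_\R^{*,+}$: a germ $g$ at $\widetilde z$ has a stem function $G$ that is holomorphic and nonvanishing on a disk $U_\i\subset\C$; the positivity condition at a real point (resp.\ simple connectedness of each leaf in the product-domain case) singles out a branch of $\log G$ whose induced slice function lies in $\mathcal{SR}_\R(U)$. With the square root now available, I treat $\mathcal{SR}_\omega\to\mathcal{SR}_\omega^*$: a nonvanishing $g=g_0+g_1 v$ (with $v^s\equiv 1$) has $g^s=g_0^2+g_1^2\in\mathcal{SR}_\R^{*,+}(U)$, and setting $r=\sqrt{g^s}$ together with $g/r=\cos\theta+v\sin\theta$ for a slice-lifted argument $\theta\in\mathcal{SR}_\R(U)$ produces $f=\log r+v\theta$ with $\exp_*(f)=g$. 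Finally, for $\mathcal{SR}\to\mathcal{SR}^*$, any germ $g$ with $g_v\not\equiv 0$ lies in $\mathcal{SR}_{[g_v]}^*$ and is covered by the previous step; if $g_v\equiv 0$ locally, then $g\in\mathcal{SR}_\R^*$ and a possible sign obstruction is absorbed by $\cI$ (or any unit imaginary) via $-1=e^{\pi\cI}$.

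\textbf{Kernels.} Setting $\exp_*(f)=1$ in the Euler formula and using the decomposition $\mathcal{SR}_\omega=\mathcal{SR}_\R\oplus\mathcal{SR}_\R\cdot v$ (valid when $v^s\neq 0$) splits the equation into $e^{f_0}\cos f_1=1$ and $e^{f_0}\sin f_1=0$ in $\mathcal{SR}_\R(U)$. Since $\mathcal{SR}_\R$ is an integral domain, $\sin f_1\equiv 0$; slice-regularity then forces $f_1$ to be locally constant with values in $\pi\Z$. The remaining equation $e^{f_0}=\pm 1$ is resolved case by case: in case~ii) ($z\in\R$) real-valuedness of $f_0$ on $\R$ forces $f_0\equiv 0$ and $f_1\equiv 2n\pi$; in case~i) ($z\notin\R$) the product-domain stalk contains $\cI$ and $e^{\pi\cI}=-1$ produces the additional solutions $f_0=(2k+\delta)\pi\cI$ paired with $f_1=(2n+\delta)\pi$ through the common parity $\delta$, since $e^{f_0}\cos f_1=(-1)^\delta(-1)^\delta=1$; in case~iii) ($v^s(z)=0$) the factor $v/\sqrt{v^s}$ has a pole at $\widetilde z$, so slice-regularity of $f$ forces the $v$-coefficient in the case~i) formula to vanish, leaving only $f=2k\pi\cI$; case~iv) for $\mathcal{SR}_\R$ is obtained from the same analysis under the additional constraint $f\in\mathcal{SR}_\R$, which here is automatic. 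The global union $\mathcal{K}_z=\bigcup_\omega\mathcal{K}_{\omega,z}$ follows from $\mathcal{SR}=\bigcup_\omega\mathcal{SR}_\omega$.

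\textbf{Main obstacle.} The hardest step is case~i), where one must classify all $f_0\in\mathcal{SR}_{\R,\widetilde z}$ satisfying $e^{f_0}=\pm 1$ on a product domain. Although Proposition~\ref{frm} decomposes $f_0=h_0+h_1 i+h_2 j+h_3 k$ with $h_l\in\mathcal{SR}_\R$, the slice-preserving constraint allows nontrivial imaginary-unit-valued functions such as $\cI$, and one must invoke the explicit structure of slice-preserving functions on product domains together with the identity principle to show that the only solutions are $(2k+\delta)\pi\cI$. A secondary difficulty is the bootstrap for the square root used in the setup: it must be produced directly from the $\mathcal{SR}_\R\to\mathcal{SR}_\R^{*,+}$ surjectivity to avoid circularity.
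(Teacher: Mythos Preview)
The paper does not give its own proof of this proposition: it is introduced by ``We recall the following (see \cite{GPV})'' and the content is imported from that reference (with \cite{AdF1,AdF2} supplying the $*$-exponential and $*$-logarithm). So there is no in-paper argument to compare against; your proposal is a reconstruction of the cited result, and the Euler-formula/matrix-representation route you take is indeed the one used in those references. Your treatment of surjectivity, the homomorphism property, and the kernels in cases i), ii), iv) is sound. Incidentally, the ``main obstacle'' you flag in case~i) is milder than you suggest: on a basic product domain a slice--preserving $f_0$ restricts to a holomorphic function on each leaf, and $e^{f_0}=\pm 1$ forces this restriction to be the constant $m\pi I$, whence $f_0=m\pi\cI$ globally by the representation formula.

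There is, however, a genuine gap in your case~iii). The normalization $v\mapsto v/\sqrt{v^s}$ and the ensuing Euler identity $\exp_*(f_0+f_1 v)=e^{f_0}(\cos f_1+v\sin f_1)$ require $v^s\equiv 1$ on the neighborhood, which is precisely what fails when $v^s(z)=0$. You cannot ``apply case~i) on a punctured neighborhood and then extend'': $\sqrt{v^s}$ need not exist there (monodromy around the spherical zero of $v^s$), and the punctured set is not basic, so the case~i) description is not available. The clean fix avoids square roots altogether. For any vectorial $f_v$ one has $f_v^{*2}=-f_v^s$, hence
\[
\exp_*(f_0+f_v)=e^{f_0}\bigl(C(f_v^s)+f_v\,S(f_v^s)\bigr),\qquad C(x)=\sum_{n\ge 0}\tfrac{(-x)^n}{(2n)!},\quad S(x)=\sum_{n\ge 0}\tfrac{(-x)^n}{(2n+1)!},
\]
with $C,S$ entire. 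Setting this equal to $1$ forces $f_v\,S(f_v^s)=0$; if $f_v\not\equiv 0$ then $S(f_v^s)\equiv 0$, so $f_v^s$ is a nonzero constant $(m\pi)^2$. But $f_v=h\,v$ with $h\in\mathcal{SR}_\R$ gives $f_v^s=h^2 v^s$, which vanishes at $\widetilde z$ since $v^s(z)=0$, a contradiction. Hence $f_v\equiv 0$, reducing to $e^{f_0}=1$ in $\mathcal{SR}_\R$, i.e.\ case~iv), and $f=2k\pi\cI$ as claimed.
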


\begin{theorem}
The sequences
$$
  0 \ra {\mathcal K}_{\omega}  \ra  \mathcal{SR}_{\omega} \xrightarrow{\mathcal{E}_*} \mathcal{SR}_{\omega}^{*} \ra 0
$$
$$
  0 \ra {\mathcal K}_{\R}  \ra  \mathcal{SR}_{\R} \xrightarrow{\mathcal{E}_*} \mathcal{SR}_{\R}^{*,+} \ra 0
$$
are  exact.
\end{theorem}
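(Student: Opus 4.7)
The plan is to reduce exactness of each sheaf sequence to a stalk-level verification at every $\widetilde{z} \in \mathcal{H}$; exactness of sequences of sheaves of abelian groups is a local condition, and at the stalks almost all the content has already been assembled in Proposition \ref{TTT}.

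First I would check that each arrow is a morphism of sheaves of abelian groups. The kernel inclusions are obviously sheaf injections. The exponential maps $\mathcal{E}_*$ are homomorphisms from $(\mathcal{SR}_{\omega},+)$ to $(\mathcal{SR}_{\omega}^*,*)$ and from $(\mathcal{SR}_{\R},+)$ to $(\mathcal{SR}_{\R}^{*,+},*)$; the key point is that the $*$-product restricted to $\mathcal{SR}_{\omega}$ is commutative (as noted after Definition \ref{vec_class}), so on the matrix side the identity $\exp(M_{f+g}) = \exp(M_f)\exp(M_g)$ passes to germs and yields $\exp_*(f+g) = \exp_*(f)*\exp_*(g)$. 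That the target of the second sequence is $\mathcal{SR}_{\R}^{*,+}$ rather than merely $\mathcal{SR}_{\R}^*$ follows because, for a slice--preserving germ $f$ at a real point $z_0$, one has $\exp_*(f)(z_0) = \exp(f(z_0))$ with $f(z_0) \in \R$, hence strictly positive.

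Next I would verify exactness at each stalk. Injectivity of the kernel inclusions is tautological. Exactness at $\mathcal{SR}_{\omega,\widetilde{z}}$ and at $\mathcal{SR}_{\R,\widetilde{z}}$ is also tautological, since by definition in Proposition \ref{TTT} the kernels are precisely $\mathcal{E}_*^{-1}(1)$. The only nontrivial point is surjectivity of $\mathcal{E}_*$ at each stalk, which is asserted by Proposition \ref{TTT}. For completeness I would record how a preimage is produced: given a germ at $\widetilde{z}$ represented on a basic neighbourhood $U$ of $z$, if $z \notin \R$ then $U$ is a basic product domain (simply connected on each leaf) so a branch of $\log_*$ exists; and if $z \in \R$ the basic slice neighbourhood admits the principal $\log_*$ because the positivity condition $g(z_0) \in (0,\infty)$ forces $\log g(z_0) \in \R$, so the resulting logarithm is slice--preserving. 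In the vectorial case with a minimal representative $v$ satisfying $v^s \ne 0$, the $2 \times 2$ matrix representation over $\mathcal{SR}_{\R}$ from Section \ref{MatRep} reduces the construction to inverting $\exp$ on a commutative ring of $2 \times 2$ matrices via the series $\log_*$ introduced earlier.

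The step I expect to be most delicate is reconciling the locally chosen branches of $\log_*$ with the target classes: this is precisely where the explicit stalk descriptions (i)--(iv) of $\mathcal{K}_{\omega,z}$ and $\mathcal{K}_{\R,z}$ supplied by Proposition \ref{TTT} become essential, as they enumerate exactly the ambiguity in the choice of logarithm and certify that any local preimage is well--defined modulo $\mathcal{K}_{\omega}$ (respectively $\mathcal{K}_{\R}$), so that the stalk-wise surjections assemble to a sheaf-theoretic surjection and exactness at the right end of each sequence is obtained.
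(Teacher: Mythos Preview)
Your proposal is correct and follows essentially the same approach as the paper: the paper states the theorem without proof, treating it as an immediate consequence of Proposition~\ref{TTT}, which already records that the two exponential maps are surjective sheaf homomorphisms and identifies their kernels. Your write-up simply unpacks the stalk-level verification that the paper leaves implicit.
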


\begin{remark}
The sequence
$$
  0 \ra {\mathcal K}  \ra  \mathcal{SR} \xrightarrow{\mathcal{E}_*} \mathcal{SR}^{*} \ra 0
$$
is not exact. If $f \in \mathcal{SR}_{\omega}(\Omega)$ and $g \in \mathcal{SR}_{\omega_1}(\Omega)$, $\omega \ne \omega_1,$ then
$\exp_*(f + g) \ne \exp_*f*\exp_* g$ in general.
In particular, if $f = 2\pi i,$ $g = 2\pi j$, then $\exp_*f=\exp_*g=1$ but
$$
  \exp_*(f+g) = \cos(2\sqrt{2}\pi ) + \frac{i+ j}{\sqrt 2}\sin(2\sqrt{2}\pi ) \ne 1.
$$
\end{remark}

We begin by studying the \v Cech cohomology of $\Omega$ with values in the abelian group
$\mathcal{I}\Z$ with respect to axially symmetric coverings.

\begin{definition}[]
Let $\Omega\subset \mathbb{H}$ be an axially symmetric domain in $\H$  and $\mathcal{U} = \{U_{\lambda}\}_{ \lambda \in \Lambda}$ an axially symmetric open  covering of $\Omega$.
{\em An $m$-cochain of  $\mathcal{U}$ with coefficients in $\mathcal{I}\mathbb{Z}$}  is any collection
$\{(f_L,U_L), f_L =\mathcal{I} n_L, \ n_L\in\Z\}_{L \in \Lambda^m}$ with $f_L$ continuous.
\end{definition}
Notice that on a slice domain the function $\mathcal{I} n_L$ is continuous if and only if $n_L = 0$.
Cochain complexes  and coboundary operators are defined as usual
 and corresponding cohomology groups denoted by $H^n(\Omega,\mathcal{I}\mathbb{Z})$.


\begin{theorem}\label{HNIZ}
Let $\Omega$ be an axially symmetric domain. Then 
\[ H^n(\Omega,\mathcal{I}\mathbb{Z}) = 0 \mbox{ for } n \geq 2.\]
\end{theorem}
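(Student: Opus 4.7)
The plan is to project everything down to a $2$-dimensional topological space and invoke classical vanishing of cohomology. Let $p\colon \H \to \overline{\C_i^+}$ be defined by $p(x+Iy) = x + iy$ for $I\in \bS$, $y\geq 0$, and $p(x) = x$ for $x\in\R$. Because $\Omega$ is axially symmetric and connected, $\Omega^* := p(\Omega)$ is a connected open subset of the closed upper half plane $\overline{\C_i^+}$, and the assignment $U \mapsto p(U)$ defines a bijection between axially symmetric open subsets of $\Omega$ and open subsets of $\Omega^*$ (with inverse $V \mapsto p^{-1}(V)$). Hence axially symmetric open coverings of $\Omega$ correspond bijectively to open coverings of $\Omega^*$.

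Next, I would identify the coefficient sheaf under this correspondence. On an axially symmetric open set $V\subset\Omega$ not meeting $\R$ the function $\cI$ is single-valued and continuous, so $\cI n$ is determined by the integer $n$; on any axially symmetric open meeting $\R$, continuity of $\cI n$ at real points forces $n=0$. This matches exactly the sections of the sheaf $j_!\Z$ on $\Omega^*$, where $j\colon \Omega^*\setminus\R \hookrightarrow \Omega^*$ is the open inclusion and $j_!\Z$ denotes the extension by zero of the constant sheaf $\Z$. The Čech complexes are naturally isomorphic, so $H^n(\Omega,\cI\Z) \cong H^n(\Omega^*, j_!\Z)$ for all $n$.

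To compute the latter I would apply the short exact sequence of sheaves on $\Omega^*$
\[
  0 \longrightarrow j_!\Z \longrightarrow \Z_{\Omega^*} \longrightarrow i_*\Z_R \longrightarrow 0,
\]
where $R := \Omega^*\cap\R = \Omega \cap \R$ and $i\colon R\hookrightarrow\Omega^*$. Since $\Omega^*$ is a noncompact, connected, paracompact $2$-manifold with boundary, it has the homotopy type of a $1$-dimensional CW-complex, so $H^n(\Omega^*,\Z) = 0$ for $n\geq 2$; and $R$ is an open subset of $\R$ (a disjoint union of open intervals), so $H^n(R,\Z) = 0$ for $n\geq 1$. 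Feeding both vanishings into the associated long exact sequence forces $H^n(\Omega^*, j_!\Z) = 0$ for all $n\geq 2$, which gives the desired result.

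The main subtlety is checking that Čech cohomology with respect to the restricted class of axially symmetric coverings really agrees with sheaf cohomology of $j_!\Z$ on $\Omega^*$. This is handled by refining to good covers (cofinal collections of contractible open sets with contractible intersections are available because $\Omega^*$ is a paracompact $2$-manifold with boundary) whose preimages under $p$ are axially symmetric open covers of $\Omega$, so Leray's acyclic cover theorem applies and the Čech complex computes the correct sheaf cohomology.
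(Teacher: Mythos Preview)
Your argument is correct and takes a genuinely different route from the paper. The paper does not pass to the half-plane quotient $\Omega^*$; instead it restricts to a full slice $\Omega_I \subset \C_I$ (a domain symmetric under complex conjugation), observes that an $\cI\Z$-valued cochain on $\Omega$ corresponds bijectively to an \emph{antisymmetric} integer cochain on $\Omega_I$, and then quotes the vanishing of the antisymmetric cohomology groups $H^n_a(\Omega_I,\Z)$ for $n\ge 2$ proved in the companion paper~\cite{PV2}. Your approach is more conceptual: you identify $\cI\Z$ directly with the extension-by-zero sheaf $j_!\Z$ on the $2$-manifold-with-boundary $\Omega^*$ and reduce to off-the-shelf facts about cohomology of surfaces via the sequence $0\to j_!\Z\to\Z_{\Omega^*}\to i_*\Z_R\to 0$. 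This avoids building the auxiliary ``antisymmetric cohomology'' machinery altogether, while the paper's route has the virtue of staying inside an explicit \v{C}ech computation on a planar domain, in keeping with the hands-on style of the surrounding arguments. One point worth sharpening in your write-up: for Leray's theorem you need each member of the good cover (and each finite intersection) to be $j_!\Z$-acyclic, not merely contractible; this amounts to requiring that its intersection with $R$ be connected (a single interval or empty), which holds for the standard good covers of a surface with boundary by charts modelled on $\R^2$ or on a closed half-plane, but should be stated explicitly.
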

We prove the theorem by restricting the data on a slice. This  induces {\em antisymmetric complexes} introduced in  \cite{PV2}.
\begin{definition}[Antisymmetric complexes and cohomology groups]
Let $D\subset \mathbb{C}$ be an open symmetric set and $\mathcal{U} = \{U_{\lambda}\}_{ \lambda \in \Lambda}$ an open symmetric covering of $D$.
{\em An antisymmetric $m$-cochain} of $\mathcal{U}$  is any collection $\{(f_L,U_L), f_L : U_L \rightarrow \mathbb{Z}\}_{L \in \Lambda^m}$ with $f_L$ continuous, constant in $U_L^+$ and  satisfying the antisymmetric property:
\[f_L(z) = - f_L(\bar{z}).\]
Antisymmetric cochain complexes  and coboundary operators are defined as usual
 and corresponding cohomology groups denoted by $H^n_{a}(\mathcal{U},\mathbb{Z})$.
 The open symmetric coverings of $D$ form a directed set under refinement, therefore we define $H^n_{a}(D,\mathbb{Z})$ to be a direct limit over symmetric open coverings of $D$.
\end{definition}
In \cite{PV2}, Theorem 5.2 we have proved that for any axially symmetric domain $\Omega \subset \H$ we have $H^n_{a}(\Omega_I,\mathbb{Z})=0$ for all $n \geq 2$.
\begin{proposition}\label{ASC}
Let $\Omega$ be an axially symmetric domain in $\H$. By restricting to $\Omega_I,$ any cochain  on $\Omega$ with values in $\mathcal{I} \mathbb{Z}$ induces an antisymmetric cochain on $\Omega_I$. Vice versa, by symmetrization of domains, every antisymmetric cochain on $\Omega_I$ induces a cochain   on $\Omega$ with values in $\mathcal{I} \mathbb{Z}$.
\end{proposition}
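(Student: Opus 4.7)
The plan is to exhibit the two induced maps explicitly and verify that each respects continuity and the structural conditions (antisymmetry, constancy on leaves, integer values). The geometric ingredient I will rely on is that, for an axially symmetric open cover $\mathcal{U} = \{U_\lambda\}$ of $\Omega$, the family $\mathcal{U}_I := \{(U_\lambda)_I\}$ is a symmetric open cover of $\Omega_I$, and conversely, for a symmetric cover $\mathcal{V} = \{V_\mu\}$ of $\Omega_I$, the family $\widetilde{\mathcal{V}} := \{\widetilde V_\mu\}$ is an axially symmetric open cover of $\Omega$. Moreover $\widetilde V_\mu \cap \widetilde V_\nu = \widetilde{V_\mu \cap V_\nu}$ for symmetric planar sets, so the intersection combinatorics at every level of the nerve match.

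For the restriction direction, given $\{(f_L = \cI n_L, U_L)\}$ I define $g_L \colon (U_L)_I \to \Z$ by $g_L \equiv n_L$ on $(U_L)_I^+$, $g_L \equiv -n_L$ on $(U_L)_I^-$, and $g_L \equiv 0$ on $(U_L)_I \cap \R$. Conjugation in $\C_I$ swaps the two leaves, which yields the antisymmetry $g_L(\bar z) = -g_L(z)$; constancy on the upper leaf is built in; integer values are evident; and continuity across $\R$ is automatic because $\cI n_L$ can be continuous on $U_L$ only if $n_L = 0$ whenever $U_L$ meets $\R$.

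For the symmetrization direction, given an antisymmetric cochain $\{(g_L, V_L)\}$, I set $n_L := g_L|_{V_L^+}$ (a single integer by the constancy clause) and declare $f_L := \cI n_L$ on $\widetilde V_L$. If $V_L$ meets $\R$, antisymmetry plus continuity at real points forces $g_L|_{V_L \cap \R} = 0$ and hence $n_L = 0$, so $f_L \equiv 0$ is trivially continuous; otherwise $\widetilde V_L$ does not meet $\R$ and $\cI$ is a continuous slice--preserving function there, making $f_L$ continuous. Compatibility with the coboundary $\delta$ is immediate: on each matching intersection one has $\delta(\cI n)_L = \cI(\delta n)_L$, where $\delta n$ is exactly the integer cocycle attached to $\delta g$ under the identification above, so the two maps are inverse to each other and descend to the cohomological direct limits.

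The main subtlety, and the point I would verify most carefully, is the matching of the two real-axis obstructions. On the $\cI\Z$ side, continuity of $\cI n_L$ forces $n_L = 0$ as soon as $U_L \cap \R \neq \varnothing$, since $\cI$ admits no continuous extension across $\R$; on the antisymmetric side, $g_L(z) = -g_L(\bar z)$ combined with continuity forces $g_L$ to vanish on $V_L \cap \R$ and hence, by the constancy clause, on the whole upper leaf. These two conditions coincide exactly, which is what makes the induced maps both well-defined and mutually inverse.
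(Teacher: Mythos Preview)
Your proof is correct and follows essentially the same approach as the paper's: both arguments identify $\mathcal I n_L$ restricted to $\Omega_I$ with the function that equals $n_L$ on $U_{L,I}^+$ and $-n_L$ on $U_{L,I}^-$, and both note that continuity of $\mathcal I n_L$ across the real axis forces $n_L = 0$ there, matching the antisymmetry constraint. The paper's own proof is terser, dismissing the symmetrization direction as ``obvious''; your version spells out that direction and additionally verifies compatibility with the coboundary (which goes beyond the proposition as stated but is exactly what is needed for the application to Theorem~\ref{HNIZ}).
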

\begin{proof}


Consider an $m$-cochain defined on an axially symmetric open  covering  $\mathcal{U} = \{U_n\}_{n \in \bN_0}$ of $\Omega$ by $C=\{({\mathcal I} n_{L}, U_{L})\}_{ L \in \bN_0^m}$, where $n_{L} = 0$ if $U_{L} \cap \R = \varnothing$. Choose $I \in \mathbb{S}$ and consider the covering
 $\mathcal{U}_I = \{U_{n,I}\}_{n \in \bN_0}$ of the symmetric set $\Omega_I \subset \mathbb{C}_I$.
 Notice that if the open set $U_{k,I}$
intersects the real axis, it is a topological disc with
$\refl{U_{k,I}} = U_{k,I}$ and hence $n_{L} = 0$ for all $L$ that contain $k$. If it does not intersect the real
axis then $U_{k,I}$ consists of two symmetric topological discs
$U_{k,I}^+$ and $U_{k,I}^-.$
Hence $C$ equals $In_{L}$ on $U_{L}^+$ and $-In_{L}$ on $U_{L}^-$ and so assigns the value $n_{L}$ on $U_{L}^+$ and the value $-n_{L}$ on $U_{L}^-.$ The reverse and the last statement are  obvious.

\end{proof}

\begin{proof}[Proof of Theorem \ref{HNIZ}] The proof is a direct consequence of Proposition \ref{ASC} and the fact that $H^n_{a}(\Omega_I,\mathbb{Z})=0$ for all $n \geq 2$.
\end{proof}

\section{Cousin problems and Cartan's Splitting Lemmas for Classes of Slice Regular Functions }
\label{cousin}
This section is devoted to  the proofs of  the following two theorems and their applications.
\begin{theorem}\label{H^1SRR+} Let $\Omega$ be an axially symmetric domain in $\H$ and  $\omega$ a global vectorial class on $\Omega$. Then
\begin{itemize}
\item[1)] $H^1(\Omega, \mathcal{SR}_{\R}) = 0$;
\item[2)]  $H^1(\Omega, \mathcal{SR}_{\omega}) = 0$;
\item[3)] $  H^1(\Omega,\mathcal{SR})=0.$
\end{itemize}
This implies that  the first (or additive) Cousin problem is solvable in  $\mathcal{SR}_{\R}$, in $\mathcal{SR}_{\omega}$ and in  $ \mathcal{SR}$.
\end{theorem}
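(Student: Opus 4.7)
The plan is to use the standard Cartan--Oka scheme adapted to axially symmetric domains and the three function classes in question, relying on the additive Cartan splitting lemmas and the Runge theorems (Theorem \ref{RungeV}) that are either stated in this paper or established in the authors' earlier work \cite{PV2}. The problems are additive, so we never need to invoke the (delicate) non-commutativity of $*$-multiplication.

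First I would treat part (1). Given a cocycle $\{f_{\lambda\mu}\}\in Z^1(\mathcal{U},\mathcal{SR}_{\R})$ on an axially symmetric open covering $\mathcal{U}$ of $\Omega$, I would refine $\mathcal{U}$ to a locally finite axially symmetric Cartan covering by basic sets, using the machinery developed in Section~4 of \cite{PV2}. I would then exhaust $\Omega$ by an increasing sequence $\Omega_1\Subset\Omega_2\Subset\cdots$ of relatively compact axially symmetric subdomains, each covered by a finite subcollection of basic sets, with $\Omega_n\cap\R$ Runge in $\Omega_{n+1}\cap\R$. On each $\Omega_n$, a finite induction using the additive Cartan splitting lemma for $\mathcal{SR}_{\R}$ (which follows by restricting to a slice $\C_I$, applying the classical additive splitting to the holomorphic cocycle obtained there, and then symmetrizing the resulting $0$-cochain to preserve the slice-preserving property) produces a $0$-cochain $\{g_\lambda^{(n)}\}$ in $\mathcal{SR}_{\R}$ whose coboundary is $\{f_{\lambda\mu}\}$ on $\Omega_n$. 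The passage $n\to n+1$ differs by a global element of $\mathcal{SR}_{\R}(\Omega_n)$, which I would correct via Runge approximation (Theorem \ref{RungeV}(1)) so that $\{g_\lambda^{(n)}\}$ converges uniformly on compacta; the limit solves the Cousin~I problem on $\Omega$.

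Part (3) then follows formally from part (1) by Proposition \ref{frm}: any $f\in\mathcal{SR}(\Omega)$ decomposes uniquely as $f=f_0+f_1 i+f_2 j+f_3 k$ with $f_\ell\in\mathcal{SR}_{\R}(\Omega)$, and this decomposition is compatible with restrictions. Hence an $\mathcal{SR}$-cocycle $\{f_{\lambda\mu}\}$ splits into four $\mathcal{SR}_{\R}$-cocycles $\{(f_{\lambda\mu})_\ell\}$, each trivialized by part (1); recombining the four $0$-cochains with the basis vectors $1,i,j,k$ yields the desired splitting. For part (2), given a global vectorial class $\omega=[v]$ with minimal representative $v$ (guaranteed on each basic set and, by Theorem \ref{GMR} to be proved later, globally), any $f\in\mathcal{SR}_{\omega}$ can be written locally as $f_0+f_1 v$ with $f_0,f_1\in\mathcal{SR}_{\R}$; since $v^s$ is nowhere zero, the coefficients $f_0$ and $f_1$ are uniquely determined and depend $\mathcal{SR}_{\R}$-linearly on $f$. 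A cocycle in $\mathcal{SR}_\omega$ thus decomposes into two cocycles in $\mathcal{SR}_{\R}$, which are trivialized by part (1), and the $0$-cochain is reassembled as $g_\lambda=(g_\lambda)_0+(g_\lambda)_1 v$.

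The main obstacle is the additive splitting lemma for $\mathcal{SR}_{\R}$ on a pair of basic sets: one must produce, from a slice-preserving function on the intersection, two slice-preserving functions on the two pieces. The classical Cartan splitting via a Cauchy integral on a slice $\C_I$ gives holomorphic functions on $U_I$ and $V_I$, but to guarantee that they extend slice-regularly and are slice-preserving on the symmetrization, one has to integrate along a contour symmetric with respect to the real axis and verify the reflection identity $\overline{g(\bar z)}=g(z)$ explicitly. Once this is in place, together with the Runge approximation on slices inherited from classical complex analysis, the Cartan--Oka induction above goes through, and parts (2) and (3) follow by the module/basis decompositions described.
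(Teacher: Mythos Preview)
Your proposal is correct and follows essentially the same Cartan--Oka scheme as the paper: refine to a Cartan covering (Theorem~1.1 of \cite{PV2}), use the additive splitting lemma for $\mathcal{SR}_{\R}$ obtained by symmetrizing the classical splitting on a slice (this is exactly Lemma~\ref{CSL0}(a) in the paper), and glue via Runge approximation so that the successive corrections form a convergent series.

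Two points of comparison are worth recording. First, the paper organizes the induction by adjoining one set of the Cartan string at a time, using the nested coverings $\mathcal{B}^{1/(j+1)}$ from \cite{PV2}, rather than exhausting by compacta $\Omega_n$ and solving a finite problem on each; both schemes are standard and interchangeable. Second, and more interestingly, for part~(2) the paper re-runs the inductive argument of part~(1) with the $\mathcal{SR}_\omega$-splitting Lemma~\ref{CSL0}(b), which forces the Cartan covering to be subordinated to $Z(\omega)$ so that the minimal representative $g_v$ does not vanish on the intersections $U_{kl}$. Your route---decompose each $f_{\lambda\mu}\in\mathcal{SR}_\omega$ globally as $(f_{\lambda\mu})_0+(f_{\lambda\mu})_1\, g_v$ and invoke part~(1) twice---sidesteps that constraint entirely and is arguably cleaner. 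One correction, however: a minimal representative $g_v$ may still have isolated nonreal zeros, so your claim that $g_v^s$ is nowhere vanishing is false in general. The uniqueness of the coefficient $f_1\in\mathcal{SR}_{\R}$ follows instead from the identity principle, since $g_v\not\equiv 0$ on any open subset; existence of $f_1$ follows from minimality of $g_v$ and the definition of the vectorial class.
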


\begin{theorem}\label{H^1SRR} Let $\Omega$ be an axially symmetric domain in $\H$ and $\omega$ a global vectorial class on $\Omega$. Then
\begin{itemize}
\item[1)] $H^1(\Omega, \mathcal{SR}^{*,+}_{\R})= H^1(\Omega, \mathcal{SR}_{\R}^{*}) = 0$;
\item[2)]  $H^1(\Omega, \mathcal{SR}^{*}_{\omega}) = 0$;
\item[3)] $  H^1(\Omega,\mathcal{SR}^*)=0.$
\end{itemize}
This implies that  the second (or multiplicative) Cousin problem is solvable in  $\mathcal{SR}^*_{\R}$,  in $\mathcal{SR}^*_{\omega}$ and in  $ \mathcal{SR}^*$.
\end{theorem}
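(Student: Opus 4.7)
The overall plan is to derive the three multiplicative vanishings from the additive vanishings of Theorem \ref{H^1SRR+} via the exponential short exact sequences of Proposition \ref{TTT}, together with the vanishing of $H^2(\Omega, \mathcal{I}\Z)$ given by Theorem \ref{HNIZ}.

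For (1), the long exact sequence of cohomology attached to $0 \to \mathcal{K}_{\R} \to \mathcal{SR}_{\R} \xrightarrow{\mathcal{E}_*} \mathcal{SR}_{\R}^{*,+} \to 0$ contains the fragment
\[
H^1(\Omega, \mathcal{SR}_{\R}) \to H^1(\Omega, \mathcal{SR}_{\R}^{*,+}) \to H^2(\Omega, \mathcal{K}_{\R}).
\]
The left term vanishes by Theorem \ref{H^1SRR+}(1). By Proposition \ref{TTT}(iv), $\mathcal{K}_{\R}$ is the sheaf $2\pi\mathcal{I}\Z$ extended by $0$ on $\Omega \cap \R$, so its $H^2$ vanishes by Theorem \ref{HNIZ}. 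Hence $H^1(\Omega, \mathcal{SR}_{\R}^{*,+}) = 0$. To pass from this to $\mathcal{SR}_{\R}^{*}$, take an axially symmetric basic covering $\mathcal{U}$ of $\Omega$ and, given a cocycle $\{f_{kl}\}$, record the sign of each $f_{kl}$ on the connected set $U_{kl} \cap \R$. These signs form a $\Z/2\Z$-valued cocycle on $\Omega \cap \R$, and since $\Omega \cap \R$ is either empty (product case) or a single open interval (slice case), this sign cocycle is a coboundary: there exist $\sigma_l \in \{\pm 1\}$ with $\epsilon_{kl} = \sigma_k^{-1}\sigma_l$. The rescaled cocycle $\sigma_k^{-1} f_{kl} \sigma_l$ lies in $\mathcal{SR}_{\R}^{*,+}$ and is therefore a coboundary by the previous step, from which $\{f_{kl}\}$ is itself a coboundary.

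For (2), fix a global minimal representative $v$ of $\omega$ (Theorem \ref{GMR}) and run the analogous argument for $0 \to \mathcal{K}_{\omega} \to \mathcal{SR}_{\omega} \to \mathcal{SR}_{\omega}^{*} \to 0$; the result reduces to the vanishing $H^2(\Omega, \mathcal{K}_{\omega}) = 0$. The stalk descriptions in Proposition \ref{TTT}(i)--(iii) exhibit $\mathcal{K}_{\omega}$ as a locally constant integer-lattice sheaf built from the generators $\mathcal{I}$ and $v/\sqrt{v^s}$; restricting to $\Omega_I$ as in Proposition \ref{ASC}, this cohomology identifies with a sum of antisymmetric integer-valued cohomology groups on the open 2-manifold $\Omega_I$, which vanish in degree $2$ exactly as in the proof of Theorem \ref{HNIZ}.

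For (3), since $\mathcal{E}_*$ is not a sheaf homomorphism on $\mathcal{SR}$ (by the Remark following Proposition \ref{TTT}) and $\mathcal{SR}^*$ is non-commutative, we pass to the matrix representation of Section \ref{MatRep}: a cocycle $\{f_{kl}\} \in Z^1(\mathcal{U}, \mathcal{SR}^*)$ corresponds to the matrix cocycle $\{M_{f_{kl}}\} \in Z^1(\mathcal{U}, \mathcal{GL}_{\mathcal{SR}_{\R}})$. Working on a Cartan covering of $\Omega$ (as developed in Section 4 of \cite{PV2}) and refining so that $|M_{f_{kl}} - \mathbf{1}|_{U_{kl}}$ is small, we apply an iterative Grauert-type splitting: write $M_{f_{kl}} = \exp(A_{kl})$, solve $A_{kl} = A_l - A_k$ up to a quadratic remainder using part (1) of Theorem \ref{H^1SRR+} applied entry-wise, exponentiate to obtain $M_{g_l}$, and replace $M_{f_{kl}}$ by $M_{g_k} M_{f_{kl}} M_{g_l}^{-1}$. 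The submultiplicativity of $|\cdot|_{\Omega}$ ensures that the new cocycle is of order $|A|^2$, and Runge approximation (Theorem \ref{RungeM}) allows the splittings to be globalised while maintaining geometric decay, so the iteration converges to a trivialisation of the original cocycle.

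The hardest part is (3): controlling the non-abelian Grauert iteration so that at each step the approximate splittings $M_{g_l}$ remain in $\mathcal{GL}_{\mathcal{SR}_{\R}}$ (i.e.\ the determinant $(g_l^s)^2$ stays away from $0$) while the successive errors decay geometrically, all compatibly with a fixed basic Cartan covering. Parts (1) and (2) are essentially bookkeeping around the exponential short exact sequences once Theorems \ref{H^1SRR+} and \ref{HNIZ} are in hand; the real work is packaging the additive splitting and the Runge approximation into a convergent multiplicative scheme.
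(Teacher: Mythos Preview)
Your argument for (1) follows essentially the paper's route: take logarithms, use $H^1(\Omega,\mathcal{SR}_{\R})=0$ and $H^2(\Omega,\mathcal{I}\Z)=0$, then handle the passage from $\mathcal{SR}_{\R}^{*,+}$ to $\mathcal{SR}_{\R}^{*}$ by a sign-cocycle argument. One correction: $\Omega\cap\R$ need not be a single interval even for a symmetric slice domain; the paper's Lemma \ref{PositiveCocycles} runs the sign argument along each connected component separately via the chain structure of the Cartan covering. This is easily repaired.

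The genuine gap is in (2). The paper explicitly observes in Remark \ref{coc} that one \emph{cannot} reduce the multiplicative problem for $\mathcal{SR}_{\omega}^*$ to the vanishing of $H^2(\Omega,\mathcal{K}_{\omega})$ in the way you propose. The stalk descriptions in Proposition \ref{TTT} involve $v/\sqrt{v^s}$, and $\sqrt{v^s}$ is only locally defined on the union of overlaps $U_{kl}$, which is not a basic domain; consequently $\mathcal{K}_{\omega}$ does not split as a locally constant integer-lattice sheaf and does not reduce to antisymmetric $\Z$-cohomology on $\Omega_I$. Worse, the stalks jump across $Z(\omega)$ (compare cases (i)--(iii)), so $\mathcal{K}_{\omega}$ is not even locally constant. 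The paper avoids this obstruction entirely: it proves (2) by the multiplicative Cartan splitting lemma (Lemma \ref{MCSL0}(b)) applied inductively over a Cartan covering subordinated to $(\{\Omega\},Z(\omega))$, never invoking the long exact sequence.

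Your sketch for (3) is in the right spirit but contains an error: you cannot make $|M_{f_{kl}}-\mathbf{1}|_{U_{kl}}$ small by refining the covering, since the cocycle values are fixed functions on the overlaps. The reduction to a cocycle near the identity must use Runge approximation (Theorem \ref{RungeM}) to peel off a global invertible factor first, as in Remark \ref{notcm}. The paper packages this into the multiplicative Cartan splitting lemma (Theorem \ref{MCSLM}) for a single Cartan pair, and then runs the same inductive scheme over the Cartan covering as in the additive proof of Theorem \ref{H^1SRR+}(1), with $c=b_n*a_n$ and $|a_n-1|<\varepsilon/2^n$ replacing the additive splittings.
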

Theorem  \ref{H1*} is a special case of Theorem \ref{H^1SRR}.
The claim in Theorem \ref{H^1SRR+} (3) is analogous to the one of Proposition $4.8$ in \cite{css}.\\

To have a concise presentation, the analogous results for all the three classes of functions are stated in one theorem. However, the proofs of all the three claims are based on the proofs for the class of slice--preserving functions. In particular, Theorem \ref{H^1SRR+} (1)  implies Theorem \ref{H^1SRR+} (3) and
Theorem \ref{H^1SRR} (1). Theorem \ref{H^1SRR} (1) implies Theorem \ref{GMR} on existence of global minimal representatives of the vectorial class $\omega$ on an axially symmetric open set $\Omega$ and consequently the Runge theorem \ref{RungeV}
holds in $\mathcal{SR}_{\omega}$.
Theorem \ref{H^1SRR} (1) and  Theorem \ref{GMR} then imply Theorem \ref{H^1SRR+} (2).

\subsection{Additive and multiplicative Cartan's Splitting lemmas}
We begin  this subsection with the proofs of additive and multiplicative
Cartan's Splitting Lemmas for slice--preserving functions, functions with
prescribed vectorial classes and for general slice--regular functions. The reason why  these classes are treated separately is that the $*$-product is commutative in the first two classes.

{
We recall the notion of Cartan pairs and strings from  Section 4, \cite{PV2}.
If $A, B \subset  \bH$ are axially symmetric compact sets with piecewise smooth boundaries satisfying $\overline{(A \setminus B)} \cap \overline{(B \setminus A)} = \varnothing$, $B$  a closed basic set and $A \cap B$ a finite union of disjoint closed basic sets,
 then we say that $(A,B)$  form a {\em Cartan pair or a Cartan $2$-string}.  A sequence $A_1,\ldots, A_n \subset \bH$ of axially symmetric  compact sets with piecewise smooth boundary is a Cartan $n$-string if $(A_1,\ldots, A_{n-1})$ and $(A_1 \cap A_n,\ldots A_{n-1} \cap A_n)$ are Cartan $(n-1)$-strings and $(A_1\cup \ldots \cup A_{n-1}, A_n)$ is a Cartan pair.

  A sequence $\mathcal {A}=\{A_n\}_{n \in \bN_0}$  of closed basic sets is a {\em Cartan sequence} if  for each $n \in \bN$, then $(A_0,\ldots,A_{n})$ is a Cartan $n+1$-string and for distinct $n_1,n_2,n_3,n_4$, we have that $A_{n_1n_2n_3n_4} = \varnothing$ and
  $A_{n_1n_2n_3}$, $A_{n_1n_2}$  are  closed basic sets if nonempty;
    if $A_{n_i} \cap \R \ne \varnothing$ for distinct $n_1, n_2,n_3$, then  $A_{n_1n_2n_3} = \varnothing$ and
    the set $A_{n_1n_2}$ is a slice basic domain if nonempty.

 A covering $\mathcal{B} = \{U_k\}_{k \in \bN_0}$ is a {\em Cartan covering} of $\Omega$ if it is a covering of $\Omega$ and  the sequence $\overline{U}_0,\overline{U}_1,\ldots $ is a Cartan sequence.

  Theorem 1.1 in \cite{PV2}, states that given ${\mathcal U}$ a locally finite axially symmetric
  open covering of an axially symmetric domain $\Omega\subset \mathbb{H}$ and  $Z \subset
  \Omega$,  a discrete set of points or spheres, there exist a
  Cartan covering $\mathcal B = \{U_n\}_{ n \in \bN_0}$ 
  subordinated to $({\mathcal U},Z).$
  Even more, there exist a sequence $\{\ve_n\}_{n \in \bN_0}$ such that also the coverings
  \[ {\mathcal B}^t:= \{ U_n^t= U_n + B(0,t\ve_n)\}_{ n \in \bN_0} \]
  are Cartan and subordinated to  $({\mathcal U},Z)$ for all $t \in [0,1].$ In particular, $\overline{U}_n = \cap_{t > 0} U^t_n.$
}
\begin{lem}[Additive Cartan's splitting lemma  with estimates]\label{CSL0}
Let $(A,B)$ be a  Cartan pair, $C:= A \cap B$ and $U$ its open neighbourhood.
Let $U_A\Subset \H$ be a (topologically) regular  axially symmetric open neighbourhood of $A$,  $U_B\Subset \H$ a (topologically) regular  axially symmetric open
neighbourhood of $B$
so that $(\overline{U_A},\overline{U_B})$ is a Cartan pair with
$U_C:= U_A \cap U_B \Subset U.$

There exists a constant $D$,  so that  any  $\gamma \in \mathcal{SR}(U)$  can be decomposed as $\gamma = \alpha + \beta$ on $U_C$
with  $\alpha\in \mathcal{SR}(U_A),$ $\beta \in
\mathcal{SR}(U_B),$ satisfying
the estimate
\begin{equation}\label{estimate1}
   |\alpha|_A, |\beta|_B \leq D |\gamma|_{\overline{U_C}}.
\end{equation}
In addition,

(a) if  $\gamma$ is slice--preserving, then we can choose $\alpha$ and $\beta$ to be slice--preserving;

(b) if $\omega$ is a vectorial class on $\Omega \Supset (U \cup U_A \cup U_B)$, $g_v$ its global minimal representative on $\Omega$, $g_v \ne 0$ on $U$, and if $\gamma \in \mathcal{SR}_{\omega}(U),$ then we can choose $\alpha\in \mathcal{SR}_{\omega}(U_A)$ and $\beta \in
\mathcal{SR}_{\omega}(U_B).$
\end{lem}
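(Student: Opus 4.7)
The plan is to reduce the additive Cartan splitting to the classical one--variable complex statement via stem functions, proving the slice--preserving case first and deriving the general and vectorial class statements from it.

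For the slice--preserving case, $\gamma \in \mathcal{SR}_{\mathbb{R}}(U)$ is encoded by a symmetric holomorphic stem function $F : U_{\i} \to \mathbb{R} + \i\mathbb{R} \subset \H_\C$ satisfying $F(\bar z) = \overline{F(z)}$. The sets $A_{\i}, B_{\i}, C_{\i}$ and their neighborhoods inherit a Cartan pair structure in the complex plane, so classical one--variable Cartan splitting (via a Cauchy integral along a smooth contour separating $\overline{A_{\i}\setminus B_{\i}}$ from $\overline{B_{\i}\setminus A_{\i}}$ inside $U_{C,\i}$) produces $F = \tilde F_A + \tilde F_B$ near $C_{\i}$ with Cauchy--type bounds $|\tilde F_A|_{A_{\i}},\,|\tilde F_B|_{B_{\i}} \leq D_0 |F|_{\overline{U_{C,\i}}}$. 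Since the pieces need not respect the stem symmetry, I would shrink their domains to symmetric neighborhoods of $A_{\i}, B_{\i}$ (possible by axial symmetry of $A,B$) and average,
\[ F_A(z) := \tfrac{1}{2}\bigl(\tilde F_A(z) + \overline{\tilde F_A(\bar z)}\bigr), \qquad F_B(z) := \tfrac{1}{2}\bigl(\tilde F_B(z) + \overline{\tilde F_B(\bar z)}\bigr). \]
Symmetry of $A_{\i}, B_{\i}$ makes $\overline{\tilde F_A(\bar z)},\,\overline{\tilde F_B(\bar z)}$ holomorphic on the respective (symmetric) neighborhoods with the same Cauchy bounds, and symmetry of $F$ gives $F_A + F_B = F$. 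Lifting via $\mathcal J$ produces $\alpha := \mathcal J(F_A) \in \mathcal{SR}_{\mathbb{R}}(U_A)$ and $\beta := \mathcal J(F_B) \in \mathcal{SR}_{\mathbb{R}}(U_B)$; the pointwise norm identity $|\gamma(x+Jy)| = |F(x+\i y)|$ (valid for slice--preserving functions since $\phi_J|_{\mathbb{R}+\i\mathbb{R}}$ is an isometry) converts the complex estimates into the required (\ref{estimate1}).

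For the general slice--regular case, Proposition~\ref{frm} decomposes $\gamma = \gamma_0 + \gamma_1 i + \gamma_2 j + \gamma_3 k$ with $\gamma_l \in \mathcal{SR}_{\mathbb{R}}(U)$, and the pointwise inequalities $|\gamma_l| \leq |\gamma|$ noted after (\ref{e1}) preserve the data size. Applying the slice--preserving splitting componentwise and reassembling $\alpha := \alpha_0 + \alpha_1 i + \alpha_2 j + \alpha_3 k$, $\beta := \beta_0 + \beta_1 i + \beta_2 j + \beta_3 k$ yields the decomposition with $D$ enlarged by a factor of $4$ via the triangle inequality. For part~(b), with a global minimal representative $g_v$ of $\omega$ nonvanishing on $U$, any $\gamma \in \mathcal{SR}_\omega(U)$ admits a unique global decomposition $\gamma = \gamma_0 + h\cdot g_v$ with $\gamma_0,h \in \mathcal{SR}_{\mathbb{R}}(U)$: the function $h = \gamma_v * g_v^{-*}$ is forced slice--preserving by the $\sim_{\widetilde p}$ condition, and $h * g_v = h\,g_v$ pointwise by Proposition~\ref{regularitygp1}(b). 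Splitting $\gamma_0$ and $h$ via the slice--preserving case and setting $\alpha := \alpha_0 + \alpha_h \cdot g_v$, $\beta := \beta_0 + \beta_h \cdot g_v$ gives the required decomposition in $\mathcal{SR}_\omega(U_A)$ and $\mathcal{SR}_\omega(U_B)$.

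The main obstacle is the symmetrization step of the slice--preserving case: the classical complex Cartan splitting offers no control over the stem symmetry, so the averaging trick is essential, and the axial symmetry of $A, B$ is critical since without the $\mathbb{R}$--reflection invariance of $A_{\i}, B_{\i}$ the reflected pieces $\overline{\tilde F_A(\bar z)}$ would not be holomorphic on the correct neighborhoods. A secondary technical point is the passage from complex stem estimates to quaternionic sup--norm estimates: this is isometric in the slice--preserving case, and only incurs a fixed multiplicative loss in the general and vectorial cases through the four--component decomposition and the pointwise identity $|h\cdot g_v| = |h|\,|g_v|$, so the final constant $D$ depends only on the geometry of the Cartan pair (and, for part~(b), on $|g_v|_U$).
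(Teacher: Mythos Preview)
Your proposal is correct and follows essentially the same approach as the paper: reduce to the classical one--variable additive Cartan splitting, symmetrize the pieces by the averaging $z\mapsto \tfrac12(f(z)+\overline{f(\bar z)})$ to obtain slice--preserving outputs, and then derive the general $\mathcal{SR}$ case and the $\mathcal{SR}_\omega$ case from the slice--preserving one via the four--component decomposition and the decomposition $\gamma=\gamma_0+\gamma_1 g_v$, respectively. The only cosmetic difference is that you phrase the reduction through stem functions on $\Omega_{\i}$ while the paper restricts to a fixed slice $\C_I$; note also that for part~(b) the constant picks up not only $|g_v|_{\overline{U_A\cup U_B}}$ but also the lower bound $m=\min_{\overline{U_C}}|g_v|$ (needed to control $|h|_{\overline{U_C}}\le |\gamma_v|_{\overline{U_C}}/m$), which you should make explicit.
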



\begin{proof} The proof is divided in three parts.

 {\bf Proof for $\mathcal{SR}_{\R}$.}
  We claim that there exist functions
$\alpha\in \mathcal{SR}_{\R}(U_A)$, $\beta\in \mathcal{SR}_{\R}(U_B),$  $\gamma = \alpha + \beta$ and such that the estimate (\ref{estimate1}) holds.
Fix $I \in \mathbb{S}$. By classical results (compare \cite{GR}, VI.E, Lemma 6), there exist holomorphic functions $\alpha_1 \in \mathcal{O}(U_{A,I})$ and
$\beta_1 \in \mathcal{O}(U_{B,I})$ so that  $\gamma_I = \alpha_1 + \beta_1$ satisfying the estimate $|\alpha_1|_A, |\beta_1|_B \leq D |\gamma_I|_{\overline{U_{C,I}}}$.
Define $\alpha_I$  by $\alpha_I(z) = (\alpha_1(z) + \overline{\alpha_1(\bar{z})})/2$ and $\beta_I$ by $\beta_I(z) = (\beta_1(z) + \overline{\beta_1(\bar{z})})/2$.
Then $|\alpha_I|_A, |\beta_I|_B \leq D |\gamma_I|_{\overline{U_{C,I}}}$.


By extension formula, the functions $\alpha_{I}$ and $\beta_{I}$ extend to slice--preserving functions $\alpha, \beta$. Then, by construction, the equality
$\gamma = \alpha + \beta$ and  the estimate (\ref{estimate1}) hold.

{\bf Proof for $\mathcal{SR}_{\omega}$.}  Set $M := |g_v|_{\overline{U_A \cup U_B}}$ and $m:=\min\{|g_v(q)|, q\in \overline{U_C}\}.$
Decompose the  function $ \gamma = \gamma_0 + \gamma_v.$ Since  $g_v$ is a minimal representative of the vectorial class $\omega$, we have
$\gamma_v =\gamma_1 g_v \mbox{ with } \gamma_1 \in \mathcal{SR}_{\R}$.
A trivial estimate  $|\gamma_1|_{\overline{U_C}} \leq |\gamma_v|_{\overline{U_C}}/m$ holds.
 For $l = 0,1,$ the functions $\gamma_l$ can be decomposed to $\gamma_l
= \alpha_l + \beta_l, $ with $\alpha_l \in \mathcal{SR}_{\R}({U}_A),$ $
\beta_l \in \mathcal{SR}_{\R}({U}_B)$ with the estimate
\begin{equation}\label{est1}
   |\alpha_l|_A, |\beta_l|_B \leq D_1 |\gamma_l|_{\overline{U_C}}.
\end{equation}
Define
$ \alpha:= \alpha_0 + \alpha_1 g_v,\quad \beta = \beta_0 +  \beta_1 g_v.$
Together with the decomposition
$\gamma = \alpha + \beta$
we also have the estimate
\begin{eqnarray*}
  |\alpha|_A &=& |\alpha_0 + \alpha_1 g_v|_A \leq |\alpha_0|_A + |\alpha_1|_A |g_v|_{A} \\
    &\leq& D_1 \left(|\gamma_0|_{\overline{U_C}} +|\gamma_1|_{\overline{U_C}}|g_v|_{A}\right)\\
   &\leq& D_1 \left(|\gamma_0|_{\overline{U_C}} +|\gamma_v|_{\overline{U_C}} \frac{|g_v|_{\overline{U_A \cup U_B}}}{m} \right)\\
    &\leq&  D_1 (1 +{M}/{m}) |\gamma|_{\overline{U_C}} = D|\gamma|_{\overline{U_C}};
\end{eqnarray*}
the last inequality follows from the definition of the decomposition $f = f_0 + f_v, $ since
$f_0(z)  =\frac12(f(z) + \overline{f(\bar{z})})$ and $f_v(z)  =\frac12(f(z) - \overline{f(\bar{z})})$
and so  on an axially symmetric compact set $K$ the estimates $|f_0|_K,|f_v|_K \leq |f|_K$ hold.
Similarly
$$
  |\beta|_B \leq D_1 (1 +{M}/{m})  |\gamma|_{\overline{U_C}};
$$

{\bf Proof for $\mathcal{SR}$.} Let $\{1, i, j, k\}$  be the standard basis of $\H$
and  $\gamma = \gamma_0 + \gamma_1 i + \gamma_2 j + \gamma_3 k \in \cS\mathcal{R}(U)$ the unique decomposition of $\gamma$ with $\gamma_l\in \cS\mathcal{R}_{\mathbb{R}} (\Omega), l= 0,\ldots,3.$ Since for each $l$ we have $ |\gamma_l|_{\overline{U_C}} \leq |\gamma|_{\overline{U_C}}$,  the case for slice--preserving functions immediately implies that there exist
$\alpha_l\in \mathcal{SR}(U_A),$ $\beta_l \in
\mathcal{SR}(U_B),$ such that $\gamma_l=\alpha_l + \beta_l$ and
the estimate $|\alpha_l|_A, |\beta_l|_B \leq D |\gamma_l|_{\overline{U_C}}$ holds and hence $|\alpha|_A, |\beta|_B \leq 4D |\gamma|_{\overline{U_C}}.$
\end{proof}
\begin{proof}[Proof of  Theorem \ref{H^1SRR+} (1) and (3)] 
The proof of the claim (3) follows immediately once we have proved claim (1).

To prove claim (1), let an element in $H^1(\Omega,\mathcal{SR}_{\R})$ be represented by a cocycle defined on a sufficiently
  fine  axially symmetric open covering $\mathcal U. $
  Let $\mathcal{B} = \{U_k, \, k \in \bN_0\}$ and
  $\mathcal{B}^{1/(j+1)} = \{U_k^{j}\}_{ k \in \bN_0} $ be Cartan coverings given by Theorem 1.1 in \cite{PV2} subordinated to $\mathcal{U}$, where the sets $\{U_k^j\}_{j \in \bN_0}$ are a basis of open neighbourhoods of $\overline{U_k}$. 
  Let  the cocycle be represented by the functions $v_{kl} \in \mathcal{SR}(U^0_{kl})$.
  Our aim is to find  functions $v_n$ defined on $U_n,$ $n \in \bN_0$ so that $v_{kl} = v_k - v_l$ on $U_{kl}.$
  Choose $\ve > 0$ and proceed by induction.
 of $U_{01}^1.$

$n = 0.$ Define $A:=\overline{U_0^2},$
 $B:= \overline{U_1^2}$, $U_A:=U_0^1$, $U_B:=U_1^1$ and $U = U_{01}^0$. Then the Cartan pair $(A,B)$ and sets $U_A, U_B, U$ fulfill the assumptions of Lemma \ref{CSL0}. Let $D$ be the constant given by this lemma. 
 Because the set $\overline{U_{01}^1}$ is a  closed basic  set, the function $c=v_{01}$ can be approximated on $\overline{U_{01}^1}$ by an entire  slice--regular slice preserving function $h$, $|c-h|_{U_{01}^1} < \ve/(2D)$.
Lemma \ref{CSL0} (a)  for the function $c - h$ defined on $U_{01}^0$ yields slice--preserving slice--regular
functions $a_1, b_1$ defined on  $U_A$ and $U_B$ respectively, so that $c = a_1 -b_1,$ $|a_1|_{A} <
\ve/2$. Set $v^1_0 = a_1$ on $U_0^1$ and $v^1_1 = b_1 - h$ on $U_1^1.$ Then
$v_{01} := v_0^1-v_{1}^1$ on a $U_{01}^1.$

$n \ra n+1.$ Assume that the functions $v^n_k \in
\mathcal{SR}_{\R}(U^{n}_{k})$, $k =0,\ldots,n,$ have already been
defined.  Set $A :=\overline{U_0^{n+2} \cup \ldots \cup U_n^{n+2}}$
and $B =\overline{U_{n+1}^{n+2}}$, $U_A =U_0^{n+1} \cup \ldots \cup U_n^{n+1}$, $U_B = U_{n+1}^{n+1}$ and $U = (U_0^n \cup \ldots \cup U_n^n) \cap U_{n+1}^n$. As above, the requirements of Lemma \ref{CSL0} are fulfilled for sets $A,B,U_A, U_B$ and $U$. Let $D$ be the constant yielded by this lemma. Consider the functions $v_{k\,n+1}$
with $k\leq n.$ The cocycle condition implies, that
the functions $v_{k\,n+1}- v_k^n$ defined on
$U_{k\, n+1}^{n},$ $k \leq n,$ coincide on the intersections of their
domains of definition,
\[v_{k\,n+1}- v_k^n=v_{l\,n+1}- v_l^n \Leftrightarrow v_{k\,n+1}- v_{l\,n+1}=v_k^n- v_l^n=v_{kl},\]
 and hence define a function $c$ on $U$ 
  By definition of Cartan pairs, the set $\overline{U_A} \cap \overline{U_B}$ is a  finite union of closed basic sets
  therefore there exists an entire function $h$ so that $|c-h|_{U_A \cap U_B} <\ve/(D 2^{n+1})$. By Lemma \ref{CSL0} (a), the function $c-h$ splits as $c - h = a_{n+1} -
b_{n+1}$ with functions $a_{n+1}, b_{n+1}$ slice--regular slice--preserving and defined on  $U_A$ and $U_B$ respectively
and  the estimate
 $|a_{n+1}|_A < \ve/2^{n+1}$  holds.  Set
$v_k^{n+1}:= v_k^n + a_{n+1}$ on $U_k^{n+1},$ $k \leq n$ and
$v_{n+1}^{n+1}:=b_{n+1}-h$ on $U_{n+1}^{n+1}.$
Obviously, $v_{k l} = v_k^{n+1}- v_l^{n+1}$ holds for $k,l\leq
n$. For $k \leq n$ we have
\[ v_k^{n+1} - v_{n+1}^{n+1} = v_k^n + a_{n+1}- b_{n+1} + h  = v_k^n + c = v_{k\,n+1}  \mbox{ on }  U_{k\,n+1}^{n+1} \]
hence $v_{kl} = v^{n+1}_k - v_l^{n+1}$ on $U_{kl}^{n+1}$ for all $k,l \leq n+1$.

For each $k\in \bN_0$  the sum $\sum_{n = k+1}^{\infty} a_n$ converges uniformly on $\overline{U_k},$ so
 $v_k:= v_k^k + \sum_{n = k+1}^{\infty} a_{n} = \lim_{n \ra \infty} v_k^n$ is well defined and $v_{kl} = v_k - v_l$.

\end{proof}

In order to investigate the solvability of the second (or multiplicative) Cousin problem, we start with the
corresponding multiplicative Cartan's Splitting
Lemmas for all the three types of classes of slice regular functions.
\begin{lem}[Multiplicative Cartan's splitting lemma with estimates]\label{MCSL0}
Let $(A,B)$ be a  Cartan pair, $C:= A \cap B$ and $U$ its open neighbourhood.
Let $U_A\Subset \H$ be a (topologically) regular axially symmetric open neighbourhood of $A$, $U_B\Subset \H$ a (topologically) regular axially symmetric open
neighbourhood of $B$ so that  $(\overline{U_A},\overline{U_B})$ is a Cartan pair with
$U_C:= U_A \cap U_B \Subset U.$

Let $\ve > 0.$ For any $c \in \mathcal{SR}^{*}(U)$  there exist $a \in
\mathcal{SR}^{*}(U_A)$  and  $b \in
\mathcal{SR}^{*}(U_B)$ such that $c = a * b $ on $U_C$ and $|a -
1|_{A} < \ve.$ In addition,

(a) if  $c\in \mathcal{SR}^{*,+}_{\R}(U)$  then we can choose $a\in \mathcal{SR}^{*,+}_{\R}(U_A)$ and $b\in \mathcal{SR}^{*,+}_{\R}(U_B)$; if, moreover,
$\overline{U_C}$ is a closed basic set, then
for any $c \in \mathcal{SR}^{*}_{\R}(U)$  we can choose  $a \in
\mathcal{SR}_{\R}^{*,+}(U_A)$ and  $b \in
\mathcal{SR}^{*}_{\R}(U_B)$;

(b) if $\omega$ is a vectorial class on $\Omega \Supset (U \cup U_A \cup U_B)$, $g_v$ its global minimal representative on $\Omega$, $g_v \ne 0$ on $U$, and if $\gamma \in \mathcal{SR}_{\omega}(U),$ then  we can choose   $\alpha\in \mathcal{SR}_{\omega}(U_A)$ and $\beta \in
\mathcal{SR}_{\omega}(U_B).$

\end{lem}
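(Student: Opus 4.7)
\emph{Strategy.} The plan is to reduce the multiplicative splitting to the additive Cartan's Splitting Lemma (Lemma~\ref{CSL0}) through the matrix representation of Subsection~\ref{MatRep}: each $f \in \mathcal{SR}$ is encoded by $M_f \in \mathcal{M}_{\mathcal{SR}_\R}$, whose entries are slice-preserving functions, and the $*$-product becomes ordinary (non-commutative) matrix multiplication. The additive lemma can then be applied entry-by-entry, while the matrix exponential and logarithm convert additive data into multiplicative data. Two distinct issues must then be dealt with: reducing a general $c$ to one that is close to $\mathbf{1}$ on $\overline{U_C}$, and coping with the non-commutativity which prevents the naive identity $\exp(P+Q) = \exp P \cdot \exp Q$.

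\emph{Reduction to the near-identity case.} I would first apply the Runge Theorem~\ref{RungeM} on the finite union of closed basic sets $\overline{U_C}$ to approximate $c$ by an entire nonvanishing $h \in \mathcal{SR}^*(\H)$ with $|c-h|_{\overline{U_C}}$ arbitrarily small. Setting $\tilde c := h^{-*}*c$ produces an element of $\mathcal{SR}^*(U)$ arbitrarily close to $1$ on $\overline{U_C}$. Once the near-identity splitting $\tilde c = \tilde a * \tilde b$ has been obtained, with $|\tilde a - 1|_A$ small, the conjugation trick
\[
  a := h * \tilde a * h^{-*}, \qquad b := h * \tilde b
\]
gives $a \in \mathcal{SR}^*(U_A)$, $b \in \mathcal{SR}^*(U_B)$, $a*b = h*\tilde a*\tilde b = h*\tilde c = c$, and $|a-1|_A \leq |h|_A \, |h^{-*}|_A \, |\tilde a - 1|_A$, which is $<\varepsilon$ provided the Runge approximation is fine enough.

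\emph{Near-identity splitting by iteration.} Assume $|M_{\tilde c} - \mathbf{1}|_{\overline{U_C}} < 1/2$, so that $\log M_{\tilde c}$ is given by a convergent series. Apply Lemma~\ref{CSL0}(a) entry-by-entry to the matrix $\log M_{\tilde c}$, obtaining $\log M_{\tilde c} = P_0 + Q_0$ with $P_0 \in \mathcal{M}_{\mathcal{SR}_\R}(U_A)$, $Q_0 \in \mathcal{M}_{\mathcal{SR}_\R}(U_B)$, and bounds $|P_0|_A, |Q_0|_B \leq D_0 |\log M_{\tilde c}|_{\overline{U_C}}$. Set $A_0 := \exp P_0$, $B_0 := \exp Q_0$ and $M_{\tilde c_1} := A_0^{-1} M_{\tilde c} B_0^{-1}$. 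A Baker--Campbell--Hausdorff expansion of $\exp(-P_0)\exp(P_0+Q_0)\exp(-Q_0)$ shows
\[
  |M_{\tilde c_1}-\mathbf{1}|_{\overline{U_C}} \leq K |M_{\tilde c}-\mathbf{1}|_{\overline{U_C}}^2,
\]
for a constant $K$ depending only on the Cartan data. Iterating yields sequences $A_n \in \mathcal{GL}_{\mathcal{SR}_\R}(U_A)$ and $B_n \in \mathcal{GL}_{\mathcal{SR}_\R}(U_B)$ whose increments $|A_n - \mathbf{1}|_A$, $|B_n - \mathbf{1}|_B$ decay doubly geometrically; hence the left-to-right product $A_0 A_1 \cdots$ and the right-to-left product $\cdots B_1 B_0$ converge to matrices $M_{\tilde a}$, $M_{\tilde b}$, and $\tilde c = \tilde a * \tilde b$ with $|\tilde a - 1|_A$ controlled by a geometric sum of the $|P_n|_A$, i.e.\ by $|\tilde c - 1|_{\overline{U_C}}$.

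\emph{Sub-cases and the main obstacle.} For part~(a) with $c \in \mathcal{SR}^{*,+}_\R$, the $*$-product is pointwise, the positivity on the real axis selects a canonical branch of $\log$, and the iteration collapses to a single step; for $c \in \mathcal{SR}^*_\R$ with $\overline{U_C}$ a closed basic set, simple connectedness of each slice section lets us still pick a branch of $\log$, with no iteration required. For part~(b), the $2 \times 2$ representation attached to the minimal representative $g_v$ makes $\mathcal{SR}_\omega$ commutative (see the remark after Definition~\ref{vec_class}), so the same one-step argument applies inside that $2\times 2$ matrix algebra. The main obstacle I anticipate is the quantitative bookkeeping of the non-commutative iteration: ensuring the additive Cartan constants of Lemma~\ref{CSL0}, the $|M_{\tilde c_n}|$-factors arising through BCH, and the conjugation losses $|h|_A |h^{-*}|_A$ combine into the clean single estimate $|a-1|_A < \varepsilon$. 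This is standard in the Grauert--Cartan theory for $\GL_n$-valued holomorphic cocycles, but must be retraced here using the equivalence (not equality) of the operator norm on $\mathcal{M}_{\mathcal{SR}_\R}$ with the quaternionic sup-norm established in Subsection~\ref{MatRep}.
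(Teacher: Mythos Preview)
Your overall architecture---handle the commutative cases (a) and (b) by taking $\log_*$, splitting additively via Lemma~\ref{CSL0}, and exponentiating in one step, and handle the general $\mathcal{SR}^*$ case via the matrix representation and a Gunning--Rossi style iteration after a Runge reduction to near-identity---is exactly the paper's route. The paper treats (a) and (b) directly and delegates the general case to Theorem~\ref{MCSLM} and Remark~\ref{notcm}, which is precisely the $\GL$-valued Cartan lemma of \cite{GR}, VI.E, Theorem~7, transplanted to $\mathcal{M}_{\mathcal{SR}_\R}$.

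There is, however, a genuine gap in your reduction step. You set $\tilde c := h^{-*}*c$ and then recover $a := h*\tilde a*h^{-*}$, estimating
\[
  |a-1|_A \;\leq\; |h|_A\,|h^{-*}|_A\,|\tilde a-1|_A.
\]
The Runge approximation in Theorem~\ref{RungeM} only controls $h$ on $\overline{U_C}$, not on the larger compact set $A$. As the approximation is refined, $|h|_A$ and $|h^{-*}|_A$ need not stay bounded (indeed $c$ is only defined on a neighbourhood of $C$, so there is no reason for its entire approximants to remain tame on $A\setminus \overline{U_C}$), and the product $|h|_A|h^{-*}|_A|\tilde a-1|_A$ need not tend to zero. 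So ``provided the Runge approximation is fine enough'' does not close the argument. You flag this as ``conjugation losses'' in your last paragraph, but it is not a bookkeeping issue---it is structural.

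The fix is to put the Runge factor on the other side, exactly as the paper does in Remark~\ref{notcm}: approximate $M_c^{-1}$ on $\overline{U_C}$ by an entire invertible $M^{-1}$, split $M_cM^{-1}=M_aM_b$ with $|M_a-\mathbf{1}|_A<\varepsilon$, and set the final decomposition to $M_c=M_a(M_bM)$. Equivalently, take $\tilde c:=c*h^{-*}$, split $\tilde c=\tilde a*\tilde b$, and set $a:=\tilde a$, $b:=\tilde b*h$. Then $|a-1|_A=|\tilde a-1|_A$ directly, with no conjugation and no dependence on $|h|_A$.

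A secondary point: in your iteration you form $M_{\tilde c_1}=A_0^{-1}M_{\tilde c}B_0^{-1}$, which is only defined on $U_C$, yet the next additive split via Lemma~\ref{CSL0} needs the datum on an open set $\Supset \overline{U_C}$. The standard remedy (and the paper's, following \cite{GR}) is to run the iteration on a nested sequence of shrinking Cartan pairs $(\overline{U_A^n},\overline{U_B^n})$ converging to $(A,B)$; you should make that explicit.
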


\begin{proof}
 Let ${U}'_A, {U}'_B \Subset \bH$ be two (topologically) regular axially symmetric open
  neighbourhoods of $U_A, U_B$ respectively, so that $(\overline{U'_A}, \overline{U'_B})$ is a Cartan pair
  and ${U}'_C := {U}'_A \cap {U}'_B\Subset U.$

{\bf Proof for $\mathcal{SR}_{\R}$.}  Because $(\overline{U'_A},\overline{U'_B})$ is a Cartan pair, the set  $\overline{U'_C}$ is a finite union of disjoint closed basic sets.  If it is a product set, then $c$  has a slice--preserving logarithm $\gamma = \Log_* c$ on ${U}'_C$ by \cite{GPV, AdF2}. If $U'_C \cap \R \ne \varnothing$ and since
the sign of $c|_{\R \cap U}$ is positive, i.e. $c \in \mathcal{SR}^+_{\R}(U_C),$ then   again it has a slice--preserving logarithm $\gamma = \Log_* c$ on ${U}'_C$ by \cite{GPV,AdF2}.  Choose $\delta > 0.$  By Runge Theorem  \ref{RungeV}  we can approximate $\gamma$ on  the compact set $\overline{U_{C}}$ by a
slice--preserving slice--regular function $\gamma'$ on $\H$ as well as we want, in particular, we can have $|\gamma -\gamma'|_{\overline{U_C}} < \delta.$
By Lemma \ref{CSL0} (a), there exist functions $\alpha' \in \mathcal{SR}_{\R}(U_A),$ $\beta' \in \mathcal{SR}_{\R}(U_B)$ such that $\gamma - \gamma' = \alpha'+ \beta'$ and
the estimates  $|\alpha'|_{{A}},|\beta'|_{{B}} < D |\gamma - \gamma'|_{\overline{{U}_C}}< \delta D.$
 Define $\alpha:=\alpha'$ and $\beta:= \beta' + \gamma',$ $a= \exp_*\alpha = \exp \circ \alpha,$ $b:= \exp_* \beta = \exp \circ \beta.$
Then $c = ab.$
The estimate  $|\alpha|_{{A}} < \delta D $ implies that for a small enough $\delta$ we have $|a - 1|_{U_A} < \ve.$
It is clear that also $a \in \mathcal{SR}_{\R}^+(U_A),$ $b \in \mathcal{SR}_{\R}^+(U_B).$

If $\overline{U_C}$ is a basic set then  we may assume without loss of generality that also $U'_{C}$ is a basic set.
If $U_C'$ is a product se, then we proceed as before.
Assume that $\R \cap {U_C'} \ne \varnothing$. Then  $c|_{\R \cap {U_C'}}$ has constant sign.
If it is negative, then $\gamma:= \Log_*(-c)$ exists and $-c = a b$ for $a \in \mathcal{SR}_{\R}^+(U_A),$ $b \in \mathcal{SR}_{\R}^+(U_B)$, hence we can write $c = a (-b).$

{\bf Proof for  $\mathcal{SR}_{\omega}$.}
Let $\omega$ be a global vectorial class and $g_v$ its minimal global representative. The conditions that $U'_C$ is a union of closed basic domains and  
$g_v \ne 0$ on $U$ ensure by \cite{GPV,AdF2}, that $\gamma:= \Log_*c \in \mathcal{SR}_{\omega}(U_C')$
exists and, moreover, the set  $\overline{U_C}$ is Runge in $\H$ (and in $\Omega$). 
Decompose $\gamma= \gamma_0 + \gamma_1 g_v.$ 
We first approximate $\gamma_l$ on  $\overline{U_C}$ by $\gamma_l'\in \mathcal{SR}_{\R}(\bH)$, $l = 0,1$, and hence $\gamma$ by $\gamma' \in \mathcal{SR}_{\omega}(\bH)$ and then conclude as in the  proof for $\mathcal{SR}_{\R}$, because for functions in the same vectorial class the equality
$\exp_*(\alpha + \beta) =(\exp_*\alpha) * (\exp_*\beta)$ holds.

{\bf Proof for $\mathcal{SR}$.}  
Because in general $\log_*c$ may not exist on $U_C$ and the $*$-multiplication is not commutative in $\mathcal{SR}$, the proof requires different tools and is  formulated as Theorem \ref{MCSLM} below.
\end{proof}


 The additive Cartan's Splitting
lemma for slice--regular functions (Lemma \ref{CSL0}) together with the matrix representation of
any $f\in \mathcal{SR}(\Omega)$ (with $\Omega$ an axially symmetric domain in $\bH$) allows us to prove the multiplicative Cartan's Splitting lemma   as in \cite{GR}; 
the multiplicative Cartan's splitting lemma in \cite{GR}, VI.E, Theorem
7, used in one complex variable and for the matrix functions $M_f$ on
symmetric domains, boils down to the following theorem because we have at our disposal
the
analogue of VI.E Lemma 6 in \cite{GR},  i.e. the additive Cartan's Splitting
lemma \ref{CSL0}.

\begin{theo}\label{MCSLM} Assumptions as in Lemma \ref{MCSL0}. Let  $M_c \in \mathcal{GL}_{\mathcal{SR}}(U).$  Then there exist matrix functions
$M_a \in \mathcal{GL}_{\mathcal{SR}}(U_A),$ $M_b \in \mathcal{GL}_{\mathcal{SR}}(U_B)$ such that
$M_a M_b = M_c.$ In addition,  given $\ve > 0,$ the function $M_a$ can be chosen to satisfy
\[|M_a - \mathbf{1}|_A < \ve.\]
\end{theo}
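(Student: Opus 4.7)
The plan is to adapt the classical Cartan iterative factorization argument (\cite{GR}, VI.E, Theorem 7), exploiting that the assignment $f\mapsto M_f$ is a real algebra homomorphism with $M_{f*g}=M_f M_g$. Thus the multiplicative factorization $M_c = M_a\cdot M_b$ is equivalent to a $*$-factorization $c = a*b$ with $a\in\mathcal{SR}^*(U_A)$, $b\in\mathcal{SR}^*(U_B)$, and the additive splitting Lemma \ref{CSL0} applied to the underlying scalar slice--regular function will feed the iteration. Throughout I would use submultiplicativity of the operator norm and the equivalence $|f|_{\Omega}\leq|M_f|_{\Omega}\leq 4|f|_{\Omega}$.

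First I would reduce to the case in which $c$ is close to $1$ on $\overline{U_C}$. By the Runge theorem \ref{RungeM} for $\mathcal{SR}^*$, applied on the disjoint union of closed basic sets containing $\overline{U_C}$, one can choose $\tilde c\in\mathcal{SR}^*(\H)$ so that $d := c*\tilde c^{-*}\in\mathcal{SR}^*(U)$ satisfies $|d-1|_{\overline{U_C}}<\delta$ for any prescribed $\delta>0$. Then $M_c = M_d\cdot M_{\tilde c}$ on $U$, and since $\tilde c$ is entire and hence defined on $U_B$, it suffices to factor $M_d = M_{a'}\cdot M_{b'}$ with $M_{a'}\in\mathcal{GL}_{\mathcal{SR}}(U_A)$, $M_{b'}\in\mathcal{GL}_{\mathcal{SR}}(U_B)$, and $|M_{a'}-\mathbf{1}|_A<\varepsilon$. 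Setting $M_a := M_{a'}$ and $M_b := M_{b'}\cdot M_{\tilde c}$ then yields the desired decomposition of $M_c$.

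The core is a quadratically convergent iteration. Starting from $M_{d^{(0)}}:=M_d=\mathbf{1}+M_{\gamma_0}$ with $|M_{\gamma_0}|_{\overline{U_C}}$ small, I would apply Lemma \ref{CSL0} to $\gamma_n\in\mathcal{SR}(U)$ at each step to obtain $\gamma_n=\alpha_n+\beta_n$ with $\alpha_n\in\mathcal{SR}(U_A)$, $\beta_n\in\mathcal{SR}(U_B)$, and $|\alpha_n|_A,|\beta_n|_B\leq D|\gamma_n|_{\overline{U_C}}$. The identity $(\mathbf{1}+M_{\alpha_n})(\mathbf{1}+M_{\beta_n}) = M_{d^{(n)}} + M_{\alpha_n}M_{\beta_n}$ then yields
\begin{equation*}
M_{d^{(n+1)}} := (\mathbf{1}+M_{\alpha_n})^{-1}M_{d^{(n)}}(\mathbf{1}+M_{\beta_n})^{-1} = \mathbf{1} - (\mathbf{1}+M_{\alpha_n})^{-1}M_{\alpha_n}M_{\beta_n}(\mathbf{1}+M_{\beta_n})^{-1},
\end{equation*}
so $|M_{d^{(n+1)}}-\mathbf{1}|_{\overline{U_C}}\leq K|M_{\gamma_n}|_{\overline{U_C}}^2$, producing doubly exponential decay for $\delta$ small. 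The left-ordered infinite product $M_{a'} := (\mathbf{1}+M_{\alpha_0})(\mathbf{1}+M_{\alpha_1})\cdots$ converges on $U_A$, the right-ordered $M_{b'} := \cdots(\mathbf{1}+M_{\beta_1})(\mathbf{1}+M_{\beta_0})$ on $U_B$, and the telescoping identity
\begin{equation*}
M_d = \bigl[(\mathbf{1}+M_{\alpha_0})\cdots(\mathbf{1}+M_{\alpha_n})\bigr]\, M_{d^{(n+1)}}\,\bigl[(\mathbf{1}+M_{\beta_n})\cdots(\mathbf{1}+M_{\beta_0})\bigr]
\end{equation*}
together with $M_{d^{(n)}}\to\mathbf{1}$ gives $M_{a'}M_{b'}=M_d$. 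The geometric estimate $|M_{a'}-\mathbf{1}|_A\leq\sum_n|M_{\alpha_n}|_A+O(\delta^2)$ then forces $|M_a-\mathbf{1}|_A<\varepsilon$ provided $\delta$ is chosen small enough at the outset.

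The main obstacle I expect is the bookkeeping of domains: Lemma \ref{CSL0} furnishes bounds on the compacta $A$ and $B$ rather than on the open sets $U_A$ and $U_B$, whereas each $(\mathbf{1}+M_{\alpha_n})^{-1}$ must exist on an open neighborhood of $A$ and each $M_{d^{(n+1)}}$ must be defined on an open neighborhood of $\overline{U_C}$ before Lemma \ref{CSL0} can be reapplied. The standard remedy, parallel to the nested Cartan covering used in the proof of Theorem \ref{H^1SRR+} (the family $\{U_n^t\}$ recalled before Lemma \ref{CSL0}), is to fix a nested sequence of Cartan pairs $(A^{(n)},B^{(n)})$ shrinking onto $(A,B)$ with $\overline{U_A}\subset A^{(0)}$ and $\overline{U_B}\subset B^{(0)}$, to apply Lemma \ref{CSL0} at step $n$ on $(A^{(n)},B^{(n)})$, and to insert a Runge approximation (Theorem \ref{RungeV}) by entire slice--regular functions so that the splittings extend slightly inward while preserving the quadratic gain. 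Super-geometric convergence of $|M_{\gamma_n}|$ makes all these shrinkages summable, so the limiting $M_{a'}$ and $M_{b'}$ remain defined and invertible on $U_A$ and $U_B$.
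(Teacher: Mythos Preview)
Your proposal is correct and follows essentially the same approach as the paper. The paper's proof (given as Remark~\ref{notcm}) also adapts the classical Gunning--Rossi iteration from \cite{GR}, VI.E, Theorem~7: a Runge approximation reduces to the near-identity case, the additive splitting Lemma~\ref{CSL0} replaces \cite{GR}, VI.E, Lemma~6 at each step, and nested shrinking neighbourhoods $A_n = A_I + B(0,2^{-n}\delta)$, $B_n = B_I + B(0,2^{-n}\delta)$ handle the domain bookkeeping you correctly flag as the main technical point. The only cosmetic difference is that the paper phrases the iteration after restricting to a slice $\C_I$ (treating $M_c|_{C_I}$ as a matrix of holomorphic functions with the appropriate symmetries), whereas you work directly with the matrix representation of slice--regular functions; since $f\mapsto M_f$ is a ring isomorphism onto its image and Lemma~\ref{CSL0} already lives in the slice--regular category, the two formulations are equivalent. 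Your mention of inserting a Runge step at each iteration is not strictly needed---the paper, following \cite{GR}, controls the shrinkage via Cauchy-type estimates on the nested compacts---but it does no harm.
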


\begin{remark}\label{notcm}
  The original result in \cite{GR}, VI.E Theorem 7, is stated for the sets $K, K'$ and $K''$ which in one complex dimension are rectangles. In  our case we set  $K':= A_I, K{''}:= B_I, K:=C_I$,  $F:=M_c|_{C_I}$ and the resulting matrix functions are  $F':=M_a|_{A_I}$ and $F''=M_b|_{B_I}$.
  Recall that  the sets $A_I$ and $B_I$ have the separation property.
  Despite the fact that in our setting we do not  have such a nice geometric situation, the proof is adapted to our situation more or less verbatim. In the sequel we focus only to parts of the proof where we make some small changes in order to have the desired estimates.

  The matrix norm $| \cdot|_2$ used in \cite{GR} is induced by the Euclidean norm  and is of course equivalent to the matrix norm $| \cdot|$ we use, which is induced by vector norm $|\cdot|_1$: for a $4 \times 4$ matrix $M$ we have $|M| / 2 \leq |M|_2 \leq 2|M|$.

  The proof starts by defining nested sequences 
  $A_n = A_I + B(0,2^{-n}\delta),$ $B_n = B_I + B(0,2^{-n}\delta),$  for a small enough $\delta$ so that $(\overline{A_n}, \overline{B_n})$ form a Cartan pair; evidently the lengths of the boundaries of $C_n=A_n \cap B_n$ are bounded from above by some positive constant
  $L$, because the set $C_I=K$ has slice--piecewise smooth boundary.

In the sequel we examine the estimates in \cite{GR}  for the matrix functions.
The proof is divided into two steps. First it is
assumed that $|F - \mathbf{1}|_{2,\overline{C_1}} < \rho/4$ for some small enough $\rho$
(fulfilling also some other conditions but this is not relevant for
the conclusions on the estimates of the product, once we know it
converges).  The function $F'$ is
obtained as an infinite product $F'=
\lim_{n \rightarrow \infty}(\mathbf{1} + G_1') \cdot \ldots \cdot(\mathbf{1} + G'_n)$ with the estimates of the form $|G'_n|_{K'}
< 2^{-n} D \rho$ for some positive constant $D,$ so that also $D \rho
< 1/2.$ Then we can estimate
\begin{eqnarray*}
    |F' - \mathbf{1}|_{2,K'} &\leq& e^{\sum_{n=1}^{\infty}|G'_n|_2} - 1 \leq e^{\rho 2 D } - 1
    \leq  \rho \ (2D e).
\end{eqnarray*}
Hence, if $\ve > 0$ is given, one can choose $\rho $ to be so small, that $4D e\rho  < \ve,$ then
we get the estimate $ |M_a - \mathbf{1}|_A \leq 2 |F' - \mathbf{1}|_{2,K'}  < \ve.$ A similar  estimate holds for
$M_b$ on the set $B.$

If $|F-\mathbf{1}|_{2,\overline{C_1}}$ is not small
enough, then by Runge theorem (Theorem \ref{RungeM} rewritten in matrix representation form) we can approximate $M_c^{-1}$ on  $C_1$ by an invertible matrix function $M^{-1}$ defined on $\H$ so that
$|M_c^{-1} - M^{-1}|_{2,\overline{C_1}} < \rho/(4 |M_c|_{2,\overline{C_1}})$.
Then $|M_c M^{-1}  -\mathbf{1} |_{2,\overline{C_1}} < \rho/4.$
As a result we get the decomposition $M_c M^{-1} = M_a M_b $ together with estimates on $M_a - \mathbf{1}, M_b - \mathbf{1}.$ The desired
decomposition is then $M_c = M_a M_b M $ with estimates on
$M_a-\mathbf{1}$ still valid, while the size of $M_bM-\mathbf{1}$ cannot be
controlled. Similarly, if we consider the decomposition
 $M^{-1}M_c  =M_a M_b $
together with estimates on $M_a, M_b$, then
$M_c = M M_a M_b$ with estimates on $M_b-\mathbf{1}$ still valid, while the size of $M M_a -\mathbf{1}$ cannot be controlled.
\end{remark}

\begin{remark}\label{ChangedOrder}
In view of Remark \ref{notcm}, one immediately concludes that in the statement of Lemma \ref{MCSL0}
the order of product  and the roles of sets $A$ and $B$ can be interchanged.
\end{remark}

\subsection{Solutions to the First and Second Cousin  problem}


As already mentioned, to prove Theorem \ref{H^1SRR+} (2) we have to find a global minimal representative for a given vectorial class.
Let us first prove the following lemma which is based on the fact that the Cartan covering has order $2$ when restricted to the real axis.
\begin{lem}\label{PositiveCocycles} Let $\Omega$ be an axially symmetric domain in $\H$. 
  Given a cocycle $v \in H^1(\Omega,\mathcal{SR}^*_{\R})$ there exists a  Cartan covering $\mathcal{U},$ the representative
  $\{v_{kl}\}_{ k,l \in \bN_0}$ of $v$ and constants $c_l \in \{-1,1\}, l \in \bN_0$, so that $c_k^{-1} v_{kl}c_l  \in \mathcal{SR}^{*,+}_{\R}(U_{kl}), k,l \in \bN_0.$
Moreover,
\[H^1(\Omega,\mathcal{SR}^*_{\R}) \simeq
H^1(\Omega,\mathcal{SR}^{*,+}_{\R}).\]
\end{lem}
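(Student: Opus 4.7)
The plan is to correct the signs of the cocycle on the real axis by absorbing locally constant signs $c_k\in\{-1,+1\}$, and then to leverage the already established vanishing $H^1(\Omega,\mathcal{SR}_{\R})=0$ of Theorem \ref{H^1SRR+}(1) via the exponential short exact sequence to deduce the isomorphism.

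I would begin by choosing a Cartan covering $\mathcal{U}=\{U_k\}_{k\in\bN_0}$ of $\Omega$ on which $v$ is represented, invoking Theorem 1.1 of \cite{PV2}. Each $U_k$ is a closed basic set, so $U_k\cap\R$ is either empty or a connected interval, while the Cartan condition forces $U_{klm}\cap\R=\varnothing$ for distinct triples. For every pair with $U_{kl}\cap\R\neq\varnothing$ the slice--preserving function $v_{kl}$ takes real nonvanishing values on the connected interval $U_{kl}\cap\R$, so it has a constant sign $\sigma_{kl}\in\{-1,+1\}$. On each connected component $J$ of $\R\cap\Omega$, the nerve of the restricted cover $\{U_k\cap J\}$ is a tree: Helly's theorem in $\R$ combined with $U_{klm}\cap\R=\varnothing$ rules out triangles, and since the restricted cover is a good cover of a contractible $1$-manifold, the nerve theorem forces simple connectedness. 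I then propagate signs on each tree: pick a root $k_0$, set $c_{k_0}=1$ and $c_l=c_k\sigma_{kl}$ along tree edges; for $k$ with $U_k\cap\R=\varnothing$, set $c_k=1$. With this choice, whenever $U_{kl}\cap\R\neq\varnothing$ one has $c_kc_l=\sigma_{kl}$ (using $c_k^{-1}=c_k$ in $\Z/2$), so $c_k^{-1}v_{kl}c_l=c_kc_l\,v_{kl}$ has sign $\sigma_{kl}^2=+1$ on $U_{kl}\cap\R$ and therefore lies in $\mathcal{SR}^{*,+}_{\R}(U_{kl})$; when $U_{kl}\cap\R=\varnothing$ the positivity condition is vacuous.

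For the isomorphism, the $0$-cochain $(c_k)\subset\mathcal{SR}^*_{\R}$ is a coboundary, so the modified cocycle represents the same class as $v$ in $H^1(\Omega,\mathcal{SR}^*_{\R})$; this proves that the inclusion-induced map $\iota_*\colon H^1(\Omega,\mathcal{SR}^{*,+}_{\R})\to H^1(\Omega,\mathcal{SR}^*_{\R})$ is surjective. Injectivity reduces to the vanishing of the source: the exponential short exact sequence $0\to\mathcal{K}_{\R}\to\mathcal{SR}_{\R}\xrightarrow{\mathcal{E}_*}\mathcal{SR}^{*,+}_{\R}\to 0$ of Proposition \ref{TTT}, together with the identification $\mathcal{K}_{\R}\simeq\mathcal{I}\Z$, yields the long exact sequence fragment
\[
H^1(\Omega,\mathcal{SR}_{\R})\;\longrightarrow\;H^1(\Omega,\mathcal{SR}^{*,+}_{\R})\;\longrightarrow\;H^2(\Omega,\mathcal{I}\Z);
\]
the left term vanishes by Theorem \ref{H^1SRR+}(1) and the right by Theorem \ref{HNIZ}, forcing $H^1(\Omega,\mathcal{SR}^{*,+}_{\R})=0$ and hence injectivity of $\iota_*$. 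The main obstacle in the plan is establishing the tree structure of the restricted nerve on $\R$; Helly's lemma takes care of triangle-freeness, while the absence of longer cycles ultimately reflects the contractibility of each component of $\R\cap\Omega$ through the nerve theorem, so at the level of exposition one may prefer to propagate signs by a direct induction along the Cartan ordering of basic sets meeting each component.
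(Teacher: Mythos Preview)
Your argument for the first assertion---choosing signs $c_k$ along the restricted cover of each real component---matches the paper's approach; the paper phrases the nerve as a \emph{chain} rather than a tree and simply propagates signs from left to right, but the content is the same.

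For the isomorphism, however, you take a genuinely different route. The paper proves $H^1(\Omega,\mathcal{SR}^{*,+}_{\R})\simeq H^1(\Omega,\mathcal{SR}^{*}_{\R})$ \emph{directly}: it checks that the assignment $\{v_{kl}\}\mapsto\{c_k^{-1}v_{kl}c_l\}$ descends to a well-defined map on cohomology classes (independent of the choice of representative and of the signs $c_k$), and that two $\mathcal{SR}^{*,+}_{\R}$-cocycles equivalent in $\mathcal{SR}^{*}_{\R}$ are already equivalent in $\mathcal{SR}^{*,+}_{\R}$---all by further sign-chasing along the real chains. You instead invoke the exponential exact sequence together with Theorem~\ref{H^1SRR+}(1) and Theorem~\ref{HNIZ} to show outright that $H^1(\Omega,\mathcal{SR}^{*,+}_{\R})=0$, after which surjectivity of $\iota_*$ forces $H^1(\Omega,\mathcal{SR}^{*}_{\R})=0$ as well. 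This is correct and shorter, but it collapses the lemma into Theorem~\ref{H^1SRR}(1): your proof of the isomorphism is in fact already a proof of that theorem, and the paper's subsequent argument becomes redundant. The paper's direct approach keeps the lemma as a stand-alone isomorphism result that does not presuppose any vanishing, at the cost of the somewhat delicate well-definedness verification; your approach is more efficient if one is headed toward the vanishing theorem anyway.
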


\begin{proof}
First notice that every cocycle in $\mathcal{SR}^{*,+}_{\R}$ is also a cocycle in $\mathcal{SR}^{*}_{\R}$.
If two cocycles with values in $\mathcal{SR}^{*,+}_{\R}$ are equivalent in $\mathcal{SR}^{*,+}_{\R}$, then they are also equivalent in $\mathcal{SR}^{*}_{\R}$.
So we have a mapping $H^1(\Omega,\mathcal{SR}^{*,+}_{\R}) \rightarrow H^1(\Omega,\mathcal{SR}^{*}_{\R})$.

We claim that we also have a mapping $H^1(\Omega,\mathcal{SR}^{*}_{\R}) \rightarrow H^1(\Omega,\mathcal{SR}^{*,+}_{\R})$ and it is bijective.
To see this, choose a  cocycle representing a given equivalence class with values in $\mathcal{SR}^*_{\R}$. Every such class can be represented by functions $v_{kl} \in \mathcal{SR}^{*}_{\R}(U_{kl})$ for a sufficiently fine  Cartan covering $\mathcal{U} =\{U_l\}_{l \in\bN_0}$.
If $U_{kl} \cap \R =\varnothing$ then $v_{kl} \in \mathcal{SR}^{*,+}(U_{kl}).$  If $U_{kl} \cap \R \ne \varnothing$, then we try to find a different representative of the cocycle.
More precisely, we are looking for constants $c_m \in \{-1,1\}$ so that $c_k^{-1}v_{kl} c_l$ is in $\mathcal{SR}_{\R}^{*,+}$.  We start by defining $c_l = 1$ if $U_l \cap \R = \varnothing.$

 As already noted, the only $v_{kl}$-s which might  not be in $ \mathcal{SR}^{*,+}(U_{kl})$ are the ones where $U_{kl} \cap \R \ne \varnothing$.  By construction of the Cartan covering, each connected component of $\Omega \cap \R$ is covered by a chain of topological open balls which do not intersect any other connected component of $\Omega \cap \R$. We define the suitable constants $c_l$ on each component separately.

 Let $(x_0,x_1) \subset \R$ be a connected component of $\Omega \cap \R$ and let $\mathcal{V} = \{V_l\}_{l \in \Z}$ be the family of all the sets $U_n$ with $U_n \cap  (x_0,x_1) \ne \varnothing.$ These are  topological open balls  which form a chain on $(x_0,x_1),$ because by definition of a Cartan covering, we have
$U_{ijk} \cap  \R = \varnothing$ for distinct $i,j,k.$  Fix one of these sets and denote it by $V_0.$ Then inductively denote by $V_{l+1}$ the set which
intersects $V_l$ on  the right-hand side and by $V_{l-1}$ the set which intersects $V_l$ on the left-hand side.
For $l \in \Z,$ let $c_{l,l+1} = 1$ if the sign of $v_{l,l+1}|_{V_{l,l+1} \cap \R}$ is positive and $c_{l,l+1} = -1$ otherwise.
Define $c_0 = 1$ and inductively define $c_{l+1} = \Pi_{k=0}^lc_{k,k+1}$ for $l \geq 0.$  Similarly, for $l \leq 0,$ set $c_{l} = \Pi_{k = l}^0 c_{k,k+1}.$ Then $c_{l,l+1} = c_l^{-1} c_{l+1}.$
Replace the representatives $v_{l,l+1}$ on the sets $V_{l, l+1}$ by $c_l^{-1}v_{l,l+1}c_{l+1},$ $l \in \bN_0.$
The new representatives define the same
cocycle and have values in $\mathcal{SR}_{\R}^{*,+}.$

Now assume that  a given equivalence class $[v]$ is represented by the cocycle $\{v_{kl}\}_{k,l\in\bN_0}$ and also has  representatives $\{u_{kl}\}_{k,l\in\bN_0}$ and $\{w_{kl}\}_{k,l\in\bN_0}$
with values in $\mathcal{SR}_{\R}^{*,+}$. Then $v_{kl}=c_ku_{kl}c_l^{-1} = d_kw_{kl}d_l^{-1}$ for some $d_k \in \mathcal{SR}^*_{}(U_k), k \in \bN_0$. Hence $u_{kl} = (c_k^{-1}d_k) w_{kl} (c_l^{-1}d_l)^{-1}.$ Because $w_{kl}$ and $u_{kl}$ are in the class $\mathcal{SR}^{*,+}$ and the product commutes, also $(c_k^{-1}d_k)(c_l^{-1}d_l)^{-1}$ is in $\mathcal{SR}^{*,+}(U_{kl})$. If $U_{kl}$ intersects the real axis, it is a basic domain and so either both $e_k:=c_k^{-1}d_k$ and $e_l:=c_l^{-1}d_l$ are in the class $\mathcal{SR}^{*,+}$ or $-e_k$ and $-e_l$ are in the class $\mathcal{SR}^{*,+}$. Consider the covering $\mathcal{V}$ containing the real interval $(x_0,x_1)$ as above and assume that $e_0 \in \mathcal{SR}^{*,+}_{\R}(V_0).$ Then also $e_{-1}$ and $e_1$ are in the class $\mathcal{SR}^{*,+}_{\R}$ and consequently this holds for all sets that intersect $(x_0,x_1)$. If $-e_0\in \mathcal{SR}^{*,+}_{\R}(V_0)$ then replace $e_k$ by $-e_k$ and again $u_{kl}$ and $w_{kl}$ are conjugated by functions from the class $\mathcal{SR}^{*,+}_{\R}$.

So the mapping that maps the equivalence class of the cocycle $\{v_{kl}\}_{k,l\in\bN_0}$ with values in $\mathcal{SR}_{\R}^{*}$ to the equivalence class of the cocycle $\{u_{kl}\}_{k,l\in\bN_0}$ with values in $\mathcal{SR}_{\R}^{*,+}$ is well-defined.

Moreover, we have proved that if two cocycles  are equivalent in $\mathcal{SR}_{\R}^{*},$ they are equivalent in $\mathcal{SR}_{\R}^{*,+}.$ Indeed, take two cocycles with values in $\mathcal{SR}^{*}_{\R}$ and choose representatives of their equivalence classes to be $\{u_{kl}\}_{k,l \in \bN_0}$ and $\{v_{kl}\}_{k,l \in \bN_0}$ in $\mathcal{SR}^{*,+}_{\R}$  with respect to some Cartan covering $\{U_l\}_{l \in \bN_0}$. Then  there exist $c_l \in  \mathcal{SR}^{*}_{\R}(U_l)$ so that $v_{kl} = c_k^{-1} u_{kl} c_l$. Using the above notation consider  $\mathcal{V}$, the covering of the real interval $(x_0,x_1)$  and assume without loss of generality that  $c_0 \in \mathcal{SR}^{*,+}_{\R}(V_0)$ (otherwise multiply all $c_l$ by $-1$).  But then also $c_l = u_{l0}c_0v_{0l}\in \mathcal{SR}_{\R}^{*,+}(V_{0l})$ for $l = 1, -1$ and inductively, since the sets $V_l$ are basic, we have $c_l  \in \mathcal{SR}_{\R}^{*,+}(V_{l})$.
This means that the mapping $H^1(\Omega,\mathcal{SR}^{*}_{\R}) \rightarrow H^1(\Omega,\mathcal{SR}^{*,+}_{\R})$ is injective and surjective, i.e.
$H^1(\Omega,\mathcal{SR}^*_{\R}) \simeq
H^1(\Omega,\mathcal{SR}^{*,+}_{\R}).$
\end{proof}

\begin{lem}\label{vc} The vectorial class $\omega$ on an axially symmetric domain  $\Omega$ defines a multiplicative cocycle in $H^1(\Omega,\mathcal{SR}_{\R}^{*,+}).$ More precisely, for every sufficiently fine Cartan covering $\mathcal{U}=\{U_l\}_{l\in \bN}$ there is a family of minimal local representatives $\{v_l\}_{ l \in \bN}$ with $v_{kl}:=v_k * v_{l}^{-*} \in \mathcal{SR}^{*,+}(U_{kl}).$
\end{lem}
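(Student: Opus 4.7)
The plan is three steps: produce minimal local representatives on a Cartan covering, identify the $*$-quotient $v_k * v_l^{-*}$ with a slice--preserving coefficient, and then adjust signs via Lemma~\ref{PositiveCocycles} to land in $\mathcal{SR}_{\R}^{*,+}$.

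First I will fix a sufficiently fine Cartan covering $\mathcal{U} = \{U_l\}_{l \in \bN}$ of $\Omega$, fine enough that on each $U_l$ the class $\omega$ admits a minimal representative $v_l \in \mathcal{SR}^v(U_l)$ (Remark 4.2 in \cite{GPV}); such coverings exist by Theorem 1.1 of \cite{PV2}. On each nonempty intersection $U_{kl}$, Definition~\ref{vec_class} produces $\lambda_{kl} \in \mathcal{SR}_{\R}(U_{kl})$ with $v_k = \lambda_{kl}\, v_l$. Since $\lambda_{kl}$ is slice--preserving, the pointwise and $*$-products coincide (Proposition~\ref{regularitygp1}(b)) and $\lambda_{kl}$ commutes with every factor in a $*$-product, so
\[
v_k * v_l^{-*} \;=\; \lambda_{kl}\,\bigl(v_l * v_l^{-*}\bigr) \;=\; \lambda_{kl}.
\]
Hence the candidate cocycle $\{v_k * v_l^{-*}\}$ is nothing but $\{\lambda_{kl}\}$, and the multiplicative cocycle identity $\lambda_{kl}\lambda_{lm} = \lambda_{km}$ follows immediately from $v_k = \lambda_{kl}v_l = \lambda_{kl}\lambda_{lm}v_m$ together with the uniqueness of the coefficient.

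Next I will check that $\lambda_{kl} \in \mathcal{SR}_{\R}^*(U_{kl})$. The zero set of a nonzero slice--preserving regular function is a discrete set of real points and $2$-spheres; if $\lambda_{kl}$ vanished on such a set inside $U_{kl}$, then by Proposition~\ref{regularitygp1}(b),(c) the product $v_k = \lambda_{kl}v_l$ would vanish there as well, contradicting the minimality of $v_k$. Hence $\lambda_{kl}$ has no real or spherical zeros, i.e.\ $\lambda_{kl} \in \mathcal{SR}_{\R}^*(U_{kl})$.

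Finally, to upgrade the cocycle $\{\lambda_{kl}\}$ from $\mathcal{SR}_{\R}^*$ to $\mathcal{SR}_{\R}^{*,+}$, I will invoke Lemma~\ref{PositiveCocycles}, which supplies signs $c_l \in \{-1,1\}$ such that $c_k c_l\, \lambda_{kl} \in \mathcal{SR}_{\R}^{*,+}(U_{kl})$ (using $c_k^{-1}=c_k$). Replacing each $v_l$ by $\tilde v_l := c_l v_l$, which is still a minimal representative of $\omega$ since scalar multiplication by $\pm 1$ creates no new zeros, produces new transition functions
\[
\tilde v_k * \tilde v_l^{-*} \;=\; (c_k v_k) * (c_l v_l)^{-*} \;=\; c_k c_l\, \lambda_{kl} \;\in\; \mathcal{SR}_{\R}^{*,+}(U_{kl}).
\]
I expect this last sign--propagation step to be the main (though mild) obstacle: one must propagate a consistent choice of sign along each connected component of $\Omega \cap \R$, and this is precisely what Lemma~\ref{PositiveCocycles} handles via the order--two property of Cartan coverings on the real axis.
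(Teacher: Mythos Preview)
Your proof is correct and follows essentially the same approach as the paper's: take minimal representatives on a Cartan covering, observe that the transition functions $v_k*v_l^{-*}$ lie in $\mathcal{SR}_{\R}^*$, and then invoke Lemma~\ref{PositiveCocycles} to adjust signs into $\mathcal{SR}_{\R}^{*,+}$. You spell out more explicitly the identification $v_k*v_l^{-*}=\lambda_{kl}$, the nonvanishing of $\lambda_{kl}$, and the fact that the sign-adjusted $\tilde v_l=c_lv_l$ remain minimal representatives, but the overall strategy is the same.
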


\begin{proof}
Let $\mathcal{U} = \{U_l\}_{l\ \in\bN}$ be a Cartan covering and for each $l \in \bN$  let $v_l$ be a minimal representative of $[v].$
By definition of  vectorial class, we have $v_{kl}:= v_{k}*v_l^{-*} \in \mathcal{SR}_{\R}^*(U_{kl}),$ so it defines a (multiplicative) cocycle in $\mathcal{SR}_{\R}^*(\Omega).$ By Lemma \ref{PositiveCocycles} there exist a representative of the same cocycle with functions in $\mathcal{SR}^{*,+}_{\R}.$
\end{proof}
Now we can prove Theorem \ref{H^1SRR} (1). For its solution
we will make use of the function $\log_*$; as expected this approach
requires very accurate description of the cocycles in the different
settings considered.
\begin{proof}[Proof of Theorem \ref{H^1SRR} (1)]
Fix an element in $H^1(\Omega, \mathcal{SR}_{\R}^{*,+}).$ Then there exists a Cartan covering $\mathcal{U} = \{U_l\}_{l\in\bN_0}$ of the axially symmetric domain $\Omega$ and let $\{(v_{kl}, U_{kl}), v_{kl}\in \mathcal{SR}_{\R}^{*,+}(U_{kl})\}_{ k,l \in \bN}$
 be the given multiplicative cocycle in $\mathcal{SR}_{\R}^{*,+},$
 namely $v_{kl} \in \mathcal{SR}_{\R}^{*,+}(U_{kl})$, $l,k \in \bN_0$, are such that they satisfy conditions  $v_{kl}* v_{lk} = 1$ on $U_{kl}$ and
  $v_{kl}* v_{lm} * v_{mk} =1$ on $U_{klm}$.

 Thanks to \cite{GPV}
there exist  slice--regular logarithms of $v_{kl}$, call them $\Log_* v_{kl} \in \mathcal{SR}_{\R}(U_{kl})$; the cocycle conditions
    together with the fact that $U_{klm}$ is basic, imply that
\begin{equation}\label{cc}
  \Log_* v_{kl} + \Log_* v_{lm} + \Log_* v_{mk} =  2\pi {\mathcal I} n_{klm}.
\end{equation}
This induces the cocycle $C_{co}=\{({\mathcal I} n_{klm}, U_{klm})\}_{ k,l,m \in \bN}$ which is trivial by  Theorem \ref{HNIZ} 
 i.e. there exists a cochain $C=\{(\mathcal I n_{kl}, U_{kl})\}_{k,l \in \bN}$ of slice--preserving functions which is mapped to $C_{co}$ by the coboundary operator.

Therefore functions $\Log_* v_{kl} - 2\pi\mathcal{I}n_{kl}$ form a cocycle of slice--preserving functions. Since $H^1(\Omega,\mathcal{SR}_{\R}) = 0$ by Theorem \ref{H^1SRR+},
there exist slice--preserving functions ${v}_l', l \in \bN,$ satisfying $v_{kl}' =v_l' - v_k'= \Log_* v_{kl} - 2\pi\mathcal{I}n_{kl},$ $k,l \in \bN.$
Then $v_l:=\exp_*(-v_l')$ satisfy $v_{kl} = v_k v_l^{-1}.$
\end{proof}

Lemma \ref{vc} and
Theorem \ref{H^1SRR} (1) immediately imply the following.
\begin{theorem}\label{GMR} Each globally defined vectorial class $\omega$ on an axially symmetric domain $\Omega$ is principal, i.e. there exists $ g_v \in \mathcal{SR}_{\omega}(\Omega),$ so that $\omega = [g_v]$.  Moreover, $Z(\omega) = g_v^{-1}(0),$ with multiplicities, so $g$ is a global  minimal representative of $\omega.$
\end{theorem}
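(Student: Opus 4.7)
The plan is to interpret the assignment of local minimal representatives of $\omega$ as a $\mathcal{SR}_{\R}^{*,+}$-valued cocycle and then to trivialize it using Theorem \ref{H^1SRR} (1).

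First, I would pick a sufficiently fine Cartan covering $\mathcal{U}=\{U_l\}_{l\in\bN_0}$ of $\Omega$ (such a covering exists by Theorem~1.1 in \cite{PV2}) so that on each $U_l$ the class $\omega$ admits a local minimal representative $v_l\in\mathcal{SR}^v(U_l)$; recall that by Remark~4.2 in \cite{GPV} such a representative has neither real nor spherical zeroes. By Lemma \ref{vc}, the transition functions
\[
v_{kl}:=v_k * v_l^{-*}\in \mathcal{SR}^{*,+}_{\R}(U_{kl})
\]
form a multiplicative cocycle in $H^1(\Omega,\mathcal{SR}^{*,+}_{\R})$.

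By Theorem \ref{H^1SRR} (1) we have $H^1(\Omega,\mathcal{SR}^{*,+}_{\R})=0$, so this cocycle is a coboundary: there exist $c_l\in\mathcal{SR}^{*,+}_{\R}(U_l)$ satisfying $v_{kl}=c_k * c_l^{-*}$ on $U_{kl}$. Rewriting this identity as
\[
c_k^{-*}*v_k = c_l^{-*}*v_l \quad\text{on } U_{kl},
\]
and noting that $c_l^{-*}$ is slice--preserving (so the $*$-product with $v_l$ coincides with the pointwise product and preserves vectoriality), the local functions $g_l:=c_l^{-*}*v_l\in \mathcal{SR}^v(U_l)$ agree on all pairwise intersections and thus glue to a globally defined $g_v\in\mathcal{SR}^v(\Omega)$.

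It remains to verify the two claims about $g_v$. Locally $g_v=c_l^{-*}*v_l$ with $c_l^{-*}\in\mathcal{SR}_{\R}^{*,+}(U_l)$ nonvanishing, so $g_v$ and $v_l$ are linearly dependent over $\mathcal{SR}_{\R}(U_l)$; by Definition \ref{vec_class} this gives $[g_v]_{\widetilde{p}}=[v_l]_{\widetilde{p}}=\omega$ for every $\widetilde{p}\subset\Omega$, i.e.\ $\omega=[g_v]$. Since each $c_l$ is nowhere vanishing, locally $g_v^{-1}(0)=v_l^{-1}(0)$ with the same multiplicities, so $g_v$ inherits from the $v_l$ the property of having no real and no spherical zeroes, i.e.\ $g_v$ is a global minimal representative and $Z(\omega)=g_v^{-1}(0)$ counted with multiplicities. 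No genuine obstacle arises here once Theorem \ref{H^1SRR} (1) and Lemma \ref{vc} are in hand; the only care needed is checking that the slice--preserving factors $c_l$ preserve the vectorial character and the minimality of the representatives, which is immediate from Proposition \ref{regularitygp1}(b).
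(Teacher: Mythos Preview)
Your proof is correct and follows essentially the same approach as the paper: form the cocycle of transition functions from local minimal representatives via Lemma~\ref{vc}, trivialize it using $H^1(\Omega,\mathcal{SR}^{*,+}_{\R})=0$ from Theorem~\ref{H^1SRR}\,(1), and glue. You include a bit more detail than the paper in verifying that the glued function is vectorial, represents $\omega$, and is minimal, but the underlying argument is the same.
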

\begin{proof} As in the proof of Lemma \ref{vc}, let $\mathcal{U} = \{U_l\}_{l\ \in\bN}$ be a Cartan covering of $\Omega$, for each $l \in \bN$  let $v_l$ be the minimal representative of $[v]$ in $U_l$ and  let,  without loss of generality,  $v_{kl}:= v_{k}^{-*}*v_l \in \mathcal{SR}_{\R}^{*,+}(U_{kl}),$ be the induced cocycle.

Because $H^1(\Omega, \mathcal{SR}^{*,+}_{\R}) = 0,$ $v_{kl} =  g_k^{-1}* g_l$ with $g_l,g_k$ slice--preserving, so
$v_l*g_l^{-*}  = v_k* g_k^{-*}$ so $\{(v_l*g_l^{-*}, U_l), l \in \bN\}$ defines a function on $\Omega.$
\end{proof}
\begin{proof}[Proof of Theorem \ref{H^1SRR+} (2)] Granted the existence of a global minimal representative of $\omega$, the proof is the same as the proof of claim (1) in Theorem \ref{H^1SRR+}, with the difference that the Cartan coverings have to be subordinated to $Z[\omega]$, and we have to use Lemma \ref{CSL0} (b) in the place of Lemma \ref{CSL0} (a).
\end{proof}
Now we can also prove  Runge Theorem \ref{RungeV} (2).

\begin{proof}[Proof of Theorem \ref{RungeV} (2)]
Let $g_v$ be a global minimal representative and $K \Subset U.$ Let ${K}':= \widetilde{K}$ be the symmetrization of $K$ and
 $\gamma  = \gamma_0 + \gamma_1 g_v \in \mathcal{SR}_{\omega}(U)$ the decomposition of $\gamma$.  Since  Runge theorem holds for slice--preserving slice--regular functions, $\gamma_l, l = 0,1$ can be
approximated on ${K}'$  by slice--preserving functions $\tilde{\gamma}_l'$  and hence
${\gamma}':= {\gamma}_0' + {\gamma}_1' g_v$
approximates $\gamma$ on $K.$
\end{proof}


\begin{remark}
For a class $\mathcal{F}$ of functions on a domain $\Omega$ which is closed under multiplication,
the second (or multiplicative) Cousin problem can be stated in this version: given an open covering $\mathcal{U}=\{U_{\alpha}\}_{\alpha}$ of a domain $\Omega$ and consider functions $f_{\alpha \beta} \in \mathcal{F}(U_{\alpha\beta})$  which satisfy the {\em (multiplicative) cocycle} or {\em compatibility} conditions
\begin{equation}\label{xcomp}
f_{\alpha\beta}\cdot f_{\beta\alpha}=1\,\, {\mathrm{in}}\ \ U_{\alpha\beta}\quad \mbox{ and }\quad
f_{\alpha\beta}\cdot f_{\beta\gamma}\cdot f_{\gamma\alpha}=1\,\, {\mathrm{in}}\ \ U_{\alpha\beta\gamma}
\end{equation}
for all $\alpha,\beta,\gamma$; it is required to find functions $h_{\alpha} \in \mathcal{E}(U_{\alpha})$ and in the same class, such that $f_{\alpha\beta}=h_{\alpha}^{-1}\cdot h_{\beta}$
for all $\alpha,\beta$.

In the noncommutative case, one would also look for the solution of the form $f_{\alpha\beta}= h_{\beta}\cdot h_{\alpha}^{-1}$
for all $\alpha,\beta$.
In this case, the  cocycle condition  is of the form $f_{\gamma\alpha}\cdot f_{\beta\gamma}\cdot f_{\alpha\beta}=1$.
 The order of writing the cocycle condition does not affect the solvability of the second Cousin problem, since they are linked by inverting the data, $ f_{\alpha \beta} \ra f_{\beta \alpha} = f_{\alpha\beta}^{-1}.$ It rather changes the side where factors $h_{\alpha}$ appear.
If the cocycle is given by functions $f_{\alpha},$ $f_{\alpha\beta}=  f_{\alpha}^{-1} \cdot f_{\beta}$, then if
 $f_{\alpha\beta}=  f_{\alpha}^{-1}\cdot f_{\beta} =h_{\alpha}^{-1}\cdot h_{\beta},$ the functions  $\{f_{\alpha}\cdot h_{\alpha}^{-1}\}$ define a global function. Similarly, if we have
$f_{\alpha\beta}= f_{\beta}\cdot f_{\alpha}^{-1}= h_{\beta}\cdot h_{\alpha}^{-1}$, then $\{h_{\alpha}^{-1}\cdot f_{\alpha}\}$ define a global function on $\Omega$.

Hence, if $\mathcal{F}$ is one of the classes $\mathcal{SR}_{\R},\mathcal{SR}_{\omega}$ or $\mathcal{SR}$ and we want to preserve properties of
the initial data such as zeroes and poles of semiregular  or regular functions, which are preserved under $*$-multiplication from the right, then the first version has to be applied.
\end{remark}

\begin{proof}[Proof of Theorem \ref{H^1SRR} (2),\ (3)] Let us first prove $(3)$.
 Let an element in $H^1(\Omega,\mathcal{SR}^*)$ be represented by a
  cocycle defined on a sufficiently
  fine Cartan covering $\mathcal U = \{U'_k\}_{ k \in \bN_0}. $
  Let $\mathcal{B} = \{U_k\}_{ k \in \bN_0}$ and  $\mathcal{B}^{1/(j+1)} = \{U_k^{j}\}_{k \in \bN_0} $ be Cartan coverings given by { Theorem 1.1 in \cite{PV2}. } 
  Let the cocycle be represented by the functions $v_{kl} \in \mathcal{SR}^*(U^0_{kl}),$ i.e the functions satisfy the conditions
\[v_{kl} * v_{lk} = 1,\quad v_{klm}:= v_{mk}*v_{lm}*v_{kl} = 1.\]

 To find nonvanishing functions $v_k$ defined on $U_k,$ $k \in \bN_0$ so that $v_{kl} = v_l * v_k^{-*}$ on $U_{kl}$, we follow the scheme of the proof is the same as the one for Theorem \ref{H^1SRR+} (1), with the difference that  we have to use multiplicative Cartan's splitting lemma \ref{MCSL0} with the decompositions of the form $c = b_n*a_n$  and estimates of the form $|a_{n+1} - 1|_{A} < \ve/2^{n+1}$
in the place of additive Cartan's splitting lemma \ref{CSL0} and estimates $|a_{n+1} - 0|_A < \ve/2^{n+1}$.
The estimates then ensure that the product $\ldots a_{k+3} * a_{k+2} * a_{k+1}$ converges uniformly on $\overline{U_k}$.

 For the proof of (2) and vectorial class $\omega$, let the coycle be represented by functions $v_{kl} \in \mathcal{SR}^{*}_{\omega}(U_{kl})$ for a  sufficiently fine Cartan covering $\mathcal{U}$  subordinated to the pair $(\{\Omega\},Z(\omega))$. By assumption, $U_{kl} \cap Z(\omega) = \varnothing$ for each $k \ne l$ and hence the Cartan's Splitting lemma \ref{MCSL0}  (b) for vectorial class applies.  Then the proof goes as the case of slice--regular functions with the only difference that the splitting is done within a given vectorial class.
\end{proof}
\begin{remark}\label{coc}
It is not possible to reduce
the multiplicative Cousin problem in a given vectorial class to the problem in $H^2(\Omega, \mathcal I \Z).$ Let $g_v$ be the
minimal representative of $\omega$ on $\Omega.$ 
Every cocycle can be represented by functions $v_{kl} \in
\mathcal{SR}^{*}_{\omega}(U_{kl})$ for some sufficiently fine  Cartan covering $\mathcal{U}$ subordinated to $(\{\Omega\},  Z(\omega))$, so
 $U_{kl} \cap Z(\omega) = \varnothing.$
{In  \cite{GPV} (see also Proposition \ref{TTT}) it is shown that}
there exist logarithms $\Log_* v_{kl} \in \mathcal{SR}_{\omega}(U_{kl})$. The cocycle condition then implies that
$$
  \Log_* v_{kl} + \Log_* v_{lm} + \Log_* v_{mk} =  2\pi {\mathcal I} n_{klm} + 2 \pi \nu_{klm}\mathcal{J}, \; \mathcal{J}:=\frac{g_v}{\sqrt{g_v^s}}.
  $$ 
 By Theorem \ref{HNIZ} we know that the cocycle $\{{\mathcal
    I}n_{klm}, k,l,m \in \bN_0\}$ is trivial. Unfortunately, the
  imaginary unit function $\mathcal{J}$
 is only locally defined and hence this data do not induce (at least not in an obvious way) a cocycle problem with integer coefficients. The problem is not that $ g_v$ may vanish since we have  chosen the covering in such a way that $g_v$ does not have zeroes on the sets $U_{kl}.$ The problem is that the square root $\sqrt{g_v^s}$ is not necessarily globally defined since the domain which is the union of $U_{kl}-s$ is not basic.
 \end{remark}

Next proposition is now straightforward:
\begin{proposition} Let $\Omega$ be an axially \ssd and $\omega = [g_v]$ a vectorial  class with vectorial function $g_v$ as its global minimal representative. Assume that also $\sqrt{g_v^s}$ is globally defined.

If $Z(\omega) = \varnothing$, then $\Gamma(\Omega,{\mathcal K}_{\omega})  = \mathbb{Z}$.

If $Z(\omega) \ne \varnothing,$ then
$
  \Gamma(\Omega,{\mathcal K}_{\omega}) = 0.
$
\end{proposition}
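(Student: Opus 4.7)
The plan is to pin down any $\sigma\in\Gamma(\Omega,\mathcal K_\omega)$ from its germ at a single real point and propagate that description globally by the identity principle for slice--regular functions. Since $\Omega$ is a symmetric slice domain, $\Omega\cap\mathbb R\ne\varnothing$; pick $x_0\in\Omega\cap\mathbb R$. Minimality of $g_v$ forces $g_v(x_0)\ne 0$, hence $g_v^s(x_0)>0$, so the slice--regular vectorial function $\mathcal J:=g_v/\sqrt{g_v^s}$ is defined on a basic neighborhood $U$ of $x_0$. By Proposition \ref{TTT}(ii), $\sigma|_U=2n\pi\mathcal J$ for some $n\in\mathbb Z$. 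Multiplying by the slice--preserving factor $\sqrt{g_v^s}$ (whose $*$-product coincides with the ordinary product by Proposition \ref{regularitygp1}(b)) gives
\begin{equation}\label{keyid}
\sigma\cdot\sqrt{g_v^s}=2n\pi\, g_v\quad\text{on }U.
\end{equation}
Both sides are globally defined slice--regular functions on $\Omega$, so \eqref{keyid} extends to the whole of $\Omega$ by the identity principle.

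Assume first $Z(\omega)=\varnothing$. Since a vectorial slice--regular function $g_v$ vanishes at some point of a sphere if and only if $g_v^s$ vanishes identically on that sphere, the hypothesis $Z(\omega)=\varnothing$ gives $g_v^s\ne 0$, and therefore $\sqrt{g_v^s}\ne 0$, everywhere on $\Omega$. Dividing \eqref{keyid} by $\sqrt{g_v^s}$ yields $\sigma=2n\pi\mathcal J$ on $\Omega$. Conversely, each $2n\pi\mathcal J$, $n\in\mathbb Z$, is a global section of $\mathcal K_\omega$ by Proposition \ref{TTT}, so the map $n\mapsto 2n\pi\mathcal J$ is the desired isomorphism $\mathbb Z\cong\Gamma(\Omega,\mathcal K_\omega)$.

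Suppose now $Z(\omega)\ne\varnothing$ and pick $z_0\in Z(\omega)$. By minimality of $g_v$, $z_0\notin\mathbb R$, and since $g_v(z_0)=0$ we have $g_v^s(z_0)=0$; as $g_v^s$ is slice--preserving, $g_v^s$ and hence $\sqrt{g_v^s}$ vanish identically on the non-real sphere $\widetilde{z_0}$. On the other hand, $g_v$ admits at most an isolated zero on a sphere, hence $g_v\ne 0$ on $\widetilde{z_0}\setminus\{z_0\}$. Evaluating \eqref{keyid} at any point of $\widetilde{z_0}\setminus\{z_0\}$ makes the left hand side vanish while the right hand side equals $2n\pi g_v\ne 0$ unless $n=0$; hence $n=0$. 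Then \eqref{keyid} reduces to $\sigma\cdot\sqrt{g_v^s}=0$ on $\Omega$, so $\sigma$ vanishes on the open dense set $\{\sqrt{g_v^s}\ne 0\}$ (which contains $\Omega\cap\mathbb R$), and the identity principle delivers $\sigma\equiv 0$, proving $\Gamma(\Omega,\mathcal K_\omega)=0$. The delicate point — and the place where the standing hypothesis that $\sqrt{g_v^s}$ is globally defined plays its role — is precisely that it allows \eqref{keyid} to be phrased as a genuine identity of slice--regular functions valid across the zero spheres of $g_v$, where the quotient $g_v/\sqrt{g_v^s}$ by itself need not be slice--regular.
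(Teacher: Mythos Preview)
Your proof is correct. For the case $Z(\omega)=\varnothing$ you proceed exactly as the paper does: fix a real point, read off the integer $n$ from the local description of the kernel there, and globalize by the identity principle.

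For the case $Z(\omega)\ne\varnothing$ you take a genuinely different route. The paper works \emph{near the zero} $z_0$: there Proposition~\ref{TTT}(iii) gives $\sigma=2\pi n\,\cI$ locally, hence $\sigma$ is constant along each leaf; the identity principle on each slice then propagates this constant to $\Omega\cap\mathbb R$, where the slice-independence of $\sigma(x)$ forces $n=0$. You instead keep the single global identity $\sigma\cdot\sqrt{g_v^s}=2n\pi\,g_v$ obtained from the real point and kill $n$ by evaluating at a point of the sphere $\widetilde{z_0}$ where $\sqrt{g_v^s}$ vanishes but $g_v$ does not. Your argument is more uniform --- both cases flow from the same identity --- and it makes transparent exactly where the global existence of $\sqrt{g_v^s}$ enters. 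The paper's argument, on the other hand, does not actually need the global square root in the second case (it only uses the slice--wise constancy of $\cI$ and the fact that $\Omega$ is a slice domain), which is perhaps worth noting.
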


\begin{proof}
Notice that, because the domain $\Omega$ intersects the real axis, the periods with $\mathcal{I}$ are excluded.
If the vectorial class does not have a zero, then a section from  $\Gamma(\Omega,{\mathcal K}_{\omega})$ is locally equal to $ 2 n \pi \frac{g_v}{\sqrt{g_v^s}}$ for some $n \in \Z$ and the identity principle implies that this holds globally.

If the vectorial class has a zero  $z_0 \in Z(\omega),$ then in a neighbourhood of $z_0$, $\log_* 1=2 \pi n \mathcal{I}$  and hence, by identity principle it is constant on every slice. Since the slices intersect the real axis, $n =0.$
\end{proof}

 The following is a generalization of Theorem \ref{H^1SRR+}. Notice that the existence of a globally defined minimal representative of $\omega$ is granted thanks to Theorem \ref{GMR}.
\begin{theorem} Le $\Omega$ be an axially symmetric domain and let
$\mathcal{F} \in \{\mathcal{SR}_{\R},\mathcal{SR}_{\omega},\mathcal{SR}\}$. Then
$ H^q(\Omega,\mathcal{F})=0$
 every $q> 0$.
\end{theorem}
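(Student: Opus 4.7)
The plan is to induct on $q$, with the base case $q=1$ supplied by Theorem \ref{H^1SRR+}, exploiting throughout the combinatorial structure of Cartan coverings. Since \v Cech cohomology on $\Omega$ is computed as a direct limit over axially symmetric open coverings, and since every such covering admits a subordinated Cartan covering by Theorem 1.1 of \cite{PV2}, it suffices to prove the vanishing $\check H^q(\mathcal{B},\mathcal{F}) = 0$ on an arbitrary Cartan covering $\mathcal{B} = \{U_n\}_{n \in \bN_0}$. For the case $\mathcal{F} = \mathcal{SR}_{\omega}$, one additionally requires $\mathcal{B}$ to be subordinated to $Z(\omega)$ so that a local minimal representative is available on each $U_{kl}$; this is again provided by Theorem 1.1 of \cite{PV2}.

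Vanishing in degrees $q \geq 3$ is automatic: by the definition of a Cartan sequence, $U_{n_1 n_2 n_3 n_4} = \varnothing$ for any four distinct indices, so an alternating $q$-cochain with $q \geq 3$ is identically zero on its domain of definition (empty for distinct indices, zero by antisymmetry otherwise). Thus both $C^q(\mathcal{B},\mathcal{F})$ and the coboundary image from $C^{q-1}(\mathcal{B},\mathcal{F})$ vanish for $q \geq 3$.

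The substantive case is $q = 2$. The same combinatorial fact makes the 2-cocycle identity (which would live on $4$-tuples) vacuous for distinct indices and trivial on repeated ones, so every alternating 2-cochain $\{f_{klm}\} \in C^2(\mathcal{B},\mathcal{F})$ is automatically a cocycle, and one must exhibit $\{g_{kl}\} \in C^1(\mathcal{B},\mathcal{F})$ with $\delta g = f$. I would argue by induction on the number of indices: suppose $g_{kl} \in \mathcal{F}(U_{kl})$ has been constructed for all $k,l \leq n$ satisfying $f_{klm} = g_{lm} - g_{km} + g_{kl}$ on $U_{klm}$, and set $h_{kl} := f_{kl,n+1} - g_{kl}$ on $U_{kl,n+1}$ for $k,l \leq n$. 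The family $\{h_{kl}\}$ is a 1-cochain on the axially symmetric open covering $\{U_k \cap U_{n+1}\}_{k \leq n}$ of $V := \bigl(\bigcup_{k \leq n} U_k\bigr) \cap U_{n+1}$; it is automatically a 1-cocycle because the triple intersections $U_{klm,n+1}$ are empty for distinct $k,l,m$ (and the condition is trivial when two indices coincide). Applying Theorem \ref{H^1SRR+} on each connected component of $V$ yields $g_{k,n+1} \in \mathcal{F}(U_k \cap U_{n+1})$ with $h_{kl} = g_{l,n+1} - g_{k,n+1}$, equivalently $f_{kl,n+1} = g_{l,n+1} - g_{k,n+1} + g_{kl}$. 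Since previously defined values are never altered, the induction produces a consistent global 1-cochain with $\delta g = f$.

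The main subtlety, rather than a serious obstacle, is that $V$ need not be connected: its components are nevertheless axially symmetric domains, so Theorem \ref{H^1SRR+} applies on each separately, and the component-wise solutions are glued by taking their union on each $U_k \cap U_{n+1}$. Unlike the proof of Theorem \ref{H^1SRR+} (1), no convergence argument or Cartan splitting with estimates is needed at the inductive step: each invocation of $H^1$ vanishing produces a global solution on $V$, and the policy of extending rather than modifying earlier values keeps the procedure termwise consistent. The argument is uniform across $\mathcal{F} \in \{\mathcal{SR}_{\R},\mathcal{SR}_{\omega},\mathcal{SR}\}$ because Theorem \ref{H^1SRR+} is already established for all three.
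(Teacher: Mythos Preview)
Your proof is correct and handles the same cases as the paper ($q=1$ via Theorem \ref{H^1SRR+}, $q\geq 3$ from the order-$3$ property of Cartan coverings), but for $q=2$ you take a genuinely different route. The paper carries out the induction on $n$ by directly invoking the additive Cartan splitting Lemma \ref{CSL0} along the chain structure of the Cartan string $\{\overline{U^{n-1}_{0n}},\ldots,\overline{U^{n-1}_{n-1\,n}}\}$, which forces it to work with the nested family $\mathcal{B}^{1/(j+1)}$ and successively restrict domains at each step. You instead package the entire inductive step into a single application of $H^1(V,\mathcal{F})=0$ on $V=\bigl(\bigcup_{k\le n}U_k\bigr)\cap U_{n+1}$, exploiting that the cocycle condition for $\{h_{kl}\}$ is vacuous on the empty $4$-fold intersections $U_{klm,n+1}$. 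This is cleaner: no nested coverings, no repeated splitting along chains, and no bookkeeping of shrinking domains through the induction, since earlier $g_{kl}$ are never touched. The one point worth making fully explicit is that ``$H^1(V,\mathcal{F})=0$'' is a direct-limit statement, so concluding that $\{h_{kl}\}$ splits on the \emph{given} covering $\{U_k\cap U_{n+1}\}$ uses that refinement is injective on $\check H^1$ for sheaves of abelian groups; this is standard, and the paper's own proof of Theorem \ref{H^1SRR+} relies on it implicitly as well.
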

\begin{proof} The triviality for $q=1$ was proved in Theorem \ref{H^1SRR+}. Triviality of higher cohomology groups follows directly from the fact that  a cocycle in $\mathcal{F}$ can be represented on a sufficiently fine Cartan covering of $\Omega$ and we have the corresponding additive splitting lemmas; in the case of $\mathcal{F}=\mathcal{SR}_{\omega}$ the covering has to be subordinated to $(\{\Omega\},Z[\omega])$. Any Cartan covering  has order $3$, hence the groups $H^q(\Omega,\mathcal{F})$ are trivial for $q \geq 3$. 

Assume that $q = 2$ and let an element in $H^2(\Omega,\mathcal{F})$ be represented by a
  cocycle defined on a sufficiently fine Cartan covering $\mathcal U = \{U'_k\}_{ k \in \bN_0} $ of $\Omega$.
  Let $\mathcal{B} = \{U_k\}_{ k \in \bN_0}$ and  $\mathcal{B}^{1/(j+1)} = \{U_k^{j}\}_{k \in \bN_0} $ be the Cartan coverings given by { Theorem 1.1 in \cite{PV2}. }
Let the cocycle on the Cartan covering $\mathcal {B}^1 = \{U^0_j\}_{ j \in \bN_0}$ be given by functions $v^0_{klm} \in \mathcal{F}(U^0_{klm}), k,l,m \in \bN_0$.
The claim follows from the fact  that the decomposition  of the form $v_{klm}= v_{kl} + v_{lm} + v_{mk}$ can be done inductively, since there is no cocycle condition to check. 
We briefly sketch the idea of the proof.

$n = 1.$ Set $v^0_{01} = 0$  and consider $v^0_{012}\in \mathcal{F}(U^0_{012})$ if  $U^0_{012} \ne\varnothing$.
Decompose $v^0_{012} -  v^0_{01}$ defined on
$U^0_{12} \cap U^0_{02}$ using the Cartan's splitting lemma \ref{CSL0} to get $ v^0_{012} - v^0_{01}= v^1_{12} + v^1_{20}$ with $v^1_{12}$ defined on $U^1_{12}$ and $v^1_{02} = -v_{20}^1$ defined on $U^1_{02}$. Set $v_{01}^1:=v_{01}^0|_{U_{01}^1}$. If $U^0_{012} =\varnothing$, set
$v_{02}^1 = v_{12}^1 = 0.$

$n-1 \ra n.$ Assume that $v^{n-1}_{kl} \in \mathcal{F}(U_{kl}^{n-1})$, $l,k = 0,\ldots n-1$, have already been constructed.
By definition, the sequence $\{\overline{U^{n-1}_{0 n}},\ldots,\overline{U^{n-1}_{n-1\, n}}\}$ is a Cartan string and hence the covering
$\{U^{n-1}_{0n},\ldots,U^{n-1}_{n-1\, n}\}$ of $U^n:=U^{n-1}_{0n}\cup\ldots\cup U^{n-1}_{n-1\, n}$ has order $2$ so no three sets from the Cartan string intersect.
In particular, the nerve of this covering  consists of a finite number of disjoint 'chains', i.e. simplicial complexes which are   finite sequences of points and line segments.

Consider the set $U_{kln}^{n-1}$ for $k<l<n$ and assume that it is not empty. We would like to show that the sets $\overline{U^{n-1}_{k n}}$ and $\overline{U^{n-1}_{l n}}$ form  a Cartan pair. Because the sequence
$\{\overline{U^{n-1}_{0n}},\ldots,\overline{U^{n-1}_{l n}}\}$ is a Cartan string, the separation property is fulfilled for the sets
$A = \overline{U^{n-1}_{0n}}\cup \ldots \cup \overline{U^{n-1}_{l-1\, n}}$ and $B = \overline{U^{n-1}_{l n}}$. Because  $A \cap B = \overline{U_{kln}^{n-1}},$
the separation property is fulfilled for the sets $\overline{U^{n-1}_{k n}}$ and $\overline{U^{n-1}_{l n}}$ thus making them a Cartan pair.

The problem can be solved on each of the chains separately. Take one of the  above chains. If it consists only of a point, it corresponds to a set $U_{kn}^{n-1}$ for some $k$ and we set $v_{kn}^n:=0$. Else, assume, for simplicity, that
the chain is defined by the sets $U^{n-1}_{0 n},\ldots,U^{n-1}_{k n}$ and proceed by induction.
Choose a  pair of sets $(U^1_A, U^1_B)$ so that $U^{n-1}_{0n}\Supset U^1_A \Supset U^n_{0n}$,   $U^{n-1}_{1n}\Supset U^1_B \Supset U^n_{1n}$ and $(\overline{U^1_A}, \overline{U^1_B})$ is a Cartan pair. Then
$v^{n-1}_{01n}$ decomposes as $v^{n-1}_{01n} - v^{n-1}_{01} = v_{1n}- v_{0n}$ with $v_{0n} \in \mathcal{F}(U^1_A)$ and  $v_{1n} \in \mathcal{F}(U^1_B)$.
Assume that $v_{ln}$ have already been constructed for $l \leq m < k$ and let $(U^m_A , U^m_B)$ be the last pair of sets. Choose a  pair of sets $(U^{m+1}_A, U^{m+1}_B)$ so that  $U^m_A \cup U^m_B \Supset U^{m+1}_A \Supset (U^{n}_{0n} \cup \ldots \cup U^{n}_{mn})$,   $U^{n-1}_{m+1\,n}\Supset U^{m+1}_B \Supset U^n_{m+1\,n}$ and $(\overline{U^{m+1}_A}, \overline{U^{m+1}_B})$ is a Cartan pair. Then
$v^n_{m\, m+1\, n}$ decomposes as $v^n_{m\, m+1\,n} - v^{n-1}_{m\,m+1} - v_{nm} = v_{m+1\,n}- a$ with $a \in \mathcal{F}(U^{m+1}_A)$ and  $v_{m+1\, n} \in \mathcal{F}(U^{m+1}_B)$. Put $v_{ln}':=v_{ln} - a$ for $l = 0,\ldots,m$. Then
$v^n_{l_1l_2n} = v^n_{l_1l_2} + v_{l_2n}' + v_{nl_1}' = v^n_{l_1l_2} + v_{l_2n} -a + v_{nl_1}+a = v'_{l_1l_2n}$. Replace $v_{ln}$ by $v_{ln}'$.
When $m = k$ set $v^n_{ln}:= v_{ln}|_{U_{ln}^n}$ for $l \leq k.$ This completes the induction step and the proof.

\end{proof}

\section{Applications to divisors and jet interpolation theorems}\label{applications}

In this section we present some consequences of the previously stated results.

\begin{theorem}[Principal divisors]\label{divisor} Every divisor on an axially symmetric domain $\Omega \subset \H$ is principal.

More precisely, let $\mathcal B = \{U_l\}_{l \in \bN}$ and $\mathcal B^1 = \{V_l\}_{l \in \bN}$ be  Cartan coverings of $\Omega$ given by Theorem  1.1 in \cite{PV2} and let the pairs $\{(V_{l}, f_{l}), f_{l} \in \mathcal{SMR}(V_l)\}$ represent a given divisor. Then there exists a global semiregular function $f \in \mathcal{SMR}(\Omega)$ so that $f_l^{-*} * f \in \mathcal{SR}^*(U_l).$
\end{theorem}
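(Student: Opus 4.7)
The plan is to interpret the divisor as a multiplicative cocycle in $\mathcal{SR}^*$ and invoke Theorem \ref{H^1SRR} (3). Explicitly, from the data $\{(V_l,f_l)\}$ the transition functions $f_{kl}:=f_k^{-*}*f_l\in\mathcal{SR}^*(V_{kl})$ satisfy the one-sided cocycle identity
\[ f_{kl}*f_{lm}=f_k^{-*}*f_l*f_l^{-*}*f_m=f_k^{-*}*f_m=f_{km} \]
on every triple overlap $V_{klm}$; this is the cocycle convention singled out at the end of the remark immediately preceding the proof of Theorem \ref{H^1SRR} (2), (3).

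By Theorem \ref{H^1SRR} (3), this cocycle is a coboundary. After passing, if necessary, to the finer Cartan covering $\{U_l\}$ on which the multiplicative Cartan splitting of Lemma \ref{MCSL0} converges, there exist $h_l\in\mathcal{SR}^*(U_l)$ with $f_{kl}|_{U_{kl}}=h_k^{-*}*h_l$. Comparing the two expressions for $f_{kl}$ gives $f_k^{-*}*f_l=h_k^{-*}*h_l$; multiplying on the left by $f_k$ and then on the right by $h_l^{-*}$ yields
\[ f_l*h_l^{-*}=f_k*h_k^{-*}\quad\text{on } U_{kl}. \]

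Hence the local expressions $f|_{U_l}:=f_l*h_l^{-*}$ agree on overlaps and assemble into a global semiregular function $f\in\mathcal{SMR}(\Omega)$, each piece being a product of a semiregular function and a nonvanishing slice--regular one. The desired relation then drops out immediately:
\[ f_l^{-*}*f=f_l^{-*}*f_l*h_l^{-*}=h_l^{-*}\in\mathcal{SR}^*(U_l). \]
The point requiring care is the noncommutative bookkeeping. The cocycle is given in the left-sided form $f_{kl}=f_k^{-*}*f_l$, so one must trivialize it in the matching left-sided form $f_{kl}=h_k^{-*}*h_l$ --- rather than in the right-sided form $h_l*h_k^{-*}$ used inside the proof of Theorem \ref{H^1SRR} (3) --- in order for the candidate global sections $f_l*h_l^{-*}$, and not some other arrangement of factors, to patch correctly. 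That both one-sided versions of $H^1(\Omega,\mathcal{SR}^*)=0$ are equivalent is the content of the data-inversion observation $f_{\alpha\beta}\mapsto f_{\beta\alpha}=f_{\alpha\beta}^{-*}$ recorded in the remark just before Theorem \ref{H^1SRR} (2), (3), so nothing beyond the cohomological vanishing already proved is needed.
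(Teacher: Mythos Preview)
Your proof is correct and follows essentially the same approach as the paper: form the transition cocycle $f_{kl}=f_k^{-*}*f_l$, trivialize it using the vanishing of $H^1(\Omega,\mathcal{SR}^*)$, and patch the local sections $f_l*h_l^{-*}$ (the paper writes $f_l*h_l$, trivializing in the form $f_{kl}=h_k*h_l^{-*}$ instead of your $h_k^{-*}*h_l$, which amounts only to relabeling the splitting factors). Your added discussion of the left- versus right-sided cocycle conventions, and the appeal to the data-inversion remark, makes explicit a point the paper leaves tacit.
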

\begin{proof}
 Let $Z$ be the support of the divisor. Without loss of generality we assume that $\{U_j\}_{ j \in \bN}$ is a Cartan covering
 of $\Omega$
 satisfying the condition $U_{kl} \cap Z = \varnothing$ for $k \ne n.$ 
 By Theorem \ref{H1*}, there exist $h_k \in \mathcal{SR}^*$ so that $f_{kl} = h_k*h_l^{-*} = f_k^{-*}*f_l,$ hence $f_l*h_l =f_k*h_k$ so $\{f_l*h_l \in \mathcal{SMR}(U_l), l \in \bN\}$ defines a semiregular function with prescribed zeroes and poles.
\end{proof}

To prove Main Theorem 2 (Theorem \ref{jets1}) we first need
\begin{theorem}[Functions with prescribed zeroes]\label{zeromult} Let $\Omega$ be an axially symmetric domain and $Z$
  a discrete set of points and spheres  contained in $\Omega$.
  As in the Interpolation Theorem \cite{PV1}, write
  $Z = \mathcal S \cup \mathcal{X} \cup \mathcal Z,$ where
\begin{enumerate}
\item $\mathcal S = \{S_j = \{a_j + Ib_j, I \in \mathbb S\}, a_j, r_j \in \R, r_j > 0, I \in \bN\}$ is the set of spheres
\item $\mathcal X = \{x_l \in \R, l \in \bN\}$ is the set of real points,
    \item $\mathcal Z = \{z_k \in \H \setminus \R, k \in \bN\}$ is the set of nonreal points, so that $\widetilde{\{z_k\}} \cap \mathcal{Z} = \{z_k\}$ for each $k \in \bN.$
\end{enumerate}
  Assume that to each sphere $S_j \in \mathcal S$ { an integer} $\sigma_j,$  to each $x_j\in \mathcal X$ an integer $\xi_l$ and to each $z_k \in  \mathcal Z$ an integer $\zeta_j$ is assigned.
  Then there exists $f \in \mathcal {SR}(\Omega)$
 with prescribed multiplicites at $Z;$
the multiplicity of the sphere $S_j$ is $\sigma_j,$ of the point $x_l$ is $\xi_l$ and of the point $z_j$ is $\zeta_j$.
\end{theorem}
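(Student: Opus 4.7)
The plan is to reduce Theorem~\ref{zeromult} to the principal divisor theorem (Theorem~\ref{divisor}) just established, by exhibiting explicit local slice--regular factors realizing the prescribed multiplicities at each element of $Z$ and then gluing them via the solution to the second Cousin problem.

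First I would build a local factor for each element of $Z$. For each sphere $S_j=a_j+b_j\bS\in\mathcal S$ with (even) spherical multiplicity $\sigma_j$, take the slice--preserving polynomial $((q-a_j)^2+b_j^2)^{\sigma_j/2}$, which has spherical multiplicity exactly $\sigma_j$ at $S_j$ and no other zeros. For each real point $x_l\in\mathcal X$, take $(q-x_l)^{\xi_l}$, which is slice--preserving with an isolated zero of order $\xi_l$ at $x_l$. For each non--real isolated point $z_k\in\mathcal Z$ with multiplicity $\zeta_k$, take the $*$-power $(q-z_k)^{*\zeta_k}$, induced by the holomorphic stem function $(z-z_k)^{\zeta_k}\colon \R_\C\to\H_\C$; using that $z\in\R_\C$ and $z_k\in\H\subset\H_\C$ commute in $\H_\C$, one checks that this slice--regular polynomial vanishes to isolated order $\zeta_k$ at $q=z_k$ and is nonzero at every other point of the sphere $\widetilde{\{z_k\}}$ (for instance $(q-z_k)^{*2}|_{\bar z_k}=(\bar z_k-z_k)^2\neq 0$). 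The normalization assumption $\widetilde{\{z_k\}}\cap\mathcal Z=\{z_k\}$ is exactly what permits singling out one point of the sphere rather than both.

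Next I would apply Theorem~1.1 of~\cite{PV2} to choose a Cartan covering $\{U_l\}_{l\in\bN}$ of $\Omega$ subordinated to the pair $(\{\Omega\},Z)$, fine enough that each $U_l$ meets $Z$ in at most one element and every pairwise intersection $U_{kl}$ avoids $Z$ entirely. On each $U_l$ containing an element $z\in Z$, set $f_l$ to be the corresponding local factor from the previous step; on the remaining $U_l$ set $f_l\equiv 1$. The pairs $\{(f_l,U_l)\}_{l\in\bN}$ then define a divisor on $\Omega$ whose support is $Z$ and whose local orders match the prescription.

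Finally, Theorem~\ref{divisor} (equivalently, a direct second Cousin argument invoking Theorem~\ref{H1*}) supplies nonvanishing $h_l\in\mathcal{SR}^*(U_l)$ with $f_k^{-*}*f_l=h_k*h_l^{-*}$, so the local functions $f_l*h_l$ agree on overlaps and glue to a global $f\in\mathcal{SR}(\Omega)$. Since the $h_l$ are nonvanishing, $f$ inherits from each $f_l$ exactly the prescribed local zero structure on $Z$ and is nonvanishing elsewhere. The main technical point is the verification in the first step that the $*$-power $(q-z_k)^{*\zeta_k}$ genuinely yields an isolated zero of order $\zeta_k$ at $z_k$ alone; once this is in hand, everything else is a routine application of the Cousin machinery established in Section~\ref{cousin}.
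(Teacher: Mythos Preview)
Your approach is essentially the paper's: build polynomial local models for the prescribed zero data, pass to a Cartan covering subordinated to $Z$, and glue via the second Cousin problem (Theorem~\ref{H1*} or, equivalently, Theorem~\ref{divisor}). The paper carries this out directly with $H^1(\Omega,\mathcal{SR}^*)=0$ rather than citing Theorem~\ref{divisor}, but that is only a packaging difference.

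There is, however, one genuine oversight. You assert that the Cartan covering can be chosen so that each $U_l$ meets $Z$ in at most one element. Since every $U_l$ is axially symmetric, this is impossible whenever a sphere $S_j\in\mathcal S$ and a non--real point $z_k\in\mathcal Z$ with $\widetilde{\{z_k\}}=S_j$ both carry prescribed data: any axially symmetric neighbourhood of one contains the other. The statement of the theorem does not exclude this case (only $\widetilde{\{z_k\}}\cap\mathcal Z=\{z_k\}$ is assumed, nothing about $\widetilde{\{z_k\}}\cap\mathcal S$), and the paper explicitly treats it by letting the local polynomial $P_{3j}$ carry \emph{both} the spherical multiplicity at $S_j$ and the isolated multiplicity at the point $z_j'\in S_j\cap\mathcal Z$, e.g.\ $((q-a_j)^2+b_j^2)^{\sigma_j/2}*(q-z_j')^{*\zeta_{j'}}$. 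Once you amend your first step to combine the two factors on any $U_l$ forced to contain both, the rest of your argument goes through unchanged.
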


\begin{proof}
 By definition, each sphere $S_j$ contains at most one point from the set $\mathcal Z.$ If it exists,  we denote it by $z_j'.$
 Let $\mathcal{P}=\{P_j, j \in \bN\}$ be the set of polynomials so that for each $j \in \bN$
 \begin{enumerate}
  \item $P_{3j}$ has prescribed spherical multiplicities at the spheres $S_j$ and at a point $z_j'$ if it exists, and no other zeroes
  \item $P_{3j-2}$ has prescribed zeroes with multiplicities at the point $x_j$ and no other zeroes and
  \item $P_{3j-1}$ has prescribed zeroes with multiplicities at the point $z_j \not \in (\cup_j S_j) \cap \mathcal Z$ and no other zeroes.
  \end{enumerate}
 Define the spheres
 $\tilde{S}_j:= \widetilde{\{z_j\}}.$ Set $\Omega' = \Omega \setminus \widetilde{Z}.$ Then the sets $U'_{3j}:=\Omega' \cup S_j,$ $U'_{3j-2}:=\Omega' \cup
 \{ x_j\}$
 and $U'_{3j-1}:=\Omega' \cup \tilde{S}_j$ define an axially
 symmetric open covering $\mathcal U$ 
 and $P_k^{-*}*P_l$ define a cocycle.
 As customary, there exists  finer
Cartan coverings $\mathcal B^{1/{(j+1})} = \{U_k^j, k\in \bN_0\}$, $j=0,1$, subordinated to
 $(\mathcal U,Z)$.

 By construction, for each open set $U_j^0
 \in \mathcal B^1$ there is at most one polynomial $P\in\mathcal{P}$ with zero in $U^0_j.$
If there is such a $P$, define $v_j:= P^{-*}$ else define $v_j = 1$ on $U^0_j.$

The functions $v_j$ define a cocycle $u_{kl}:= v_l * v_k^{-*} \in
U_{kl}^0$ of invertible functions.

 The cocycle  is trivial by Theorem \ref{H1*}
  and hence there exist invertible functions $h_k \in \mathcal{SR}^*(U_k^1)$ so that $
 v_k * v_l^{-*}= h_k * h_l^{-*}.$ Since *-multiplication from the
 right hand side preserves the zeroes of the function on the left hand
 side, by setting $u_k:= v_k^{-*}*h_k$ we define a slice--regular
 function on $U_k^1$ with prescribed zeroes on $U_k^1$ and for each $k,l
 \in \bN_0$ the functions $u_k$ and $u_l$ coincide on $U_{kl}^1$, hence
 the family $\{u_k \in \mathcal{SR}(U_k^1), k \in \bN_0\}$ defines a
 global slice--regular function on $\Omega.$
\end{proof}
 We are now ready to proceed to the proof of  Theorem \ref{jets1}.

\begin{proof}[Proof of Theorem \ref{jets1}]
We follow a similar approach as in the proof of Theorem \ref{zeromult}. Let $\Omega$ and $Z$ be as in Theorem \ref{zeromult}.
By definition, each sphere $S_j$ contains at most one point  $z_j'$  from the set $\mathcal Z.$
 Let
 $\mathcal{P}=\{P_j, j \in \bN\}$
 be the set of polynomials so that for each  $j\in
 \bN,$  the polynomial $P_{3j}$ has prescribed spherical jet for the point 
 $z_j'$ if $S_j \cap \mathcal Z = \{z_j'\}$ and a prescribed spherical jet on the sphere $S_j$ otherwise;  similarly, let polynomials
 $P_{3j-2},j \in \bN$ have prescribed jets at the points $x_j$ and $P_{3j-1},j \in \bN$ at the points $z_j \ne z_k'$.

 Let $g$ be a function with prescribed zeroes in $Z$ with multiplicities strictly grater than the  corresponding  orders of jets as in Theorem \ref{zeromult}.

 Define the spheres
 $\tilde{S}_j:= \widetilde{\{z_j\}}.$ Then the sets $U_{3j}:=\Omega \setminus S_j,$ $U_{3j-2}:=\Omega \setminus\{ \tilde{x}_j\}$
 and $U_{3j-1}:=\Omega \setminus \tilde{S}_j$ define an axially
 symmetric open covering $\mathcal U$.
  As before, there exists  finer
Cartan coverings $\mathcal B^{1/{j+1}} = \{U_k^j, k\in \bN\}$, $j \in \bN_0$, subordinated to
 $(\mathcal U,Z)$

 By construction, for each open set $U_k^0
 \in \mathcal B^1$ there is at most one polynomial $P$ with prescribed jet in $U^0_j.$
 If it exists, define $Q_j = P$ else define $Q_j = 0$.

 The functions $v_{kl}:= g^{-*}*(Q_k - Q_l) \in \mathcal{SR}(U_{kl}^0)$ define an additive cocycle  of slice--regular functions which is trivial by Theorem \ref{H^1SRR+} 
  and hence there exists slice--regular functions $v_k\in\mathcal{SR}(U_{k}^1)$   so that  $v_{kl}= v_k - v_l$. Then
 $
   Q_k - Q_l = g*v_k - g*v_l \mbox{ on } U_{kl}^1.
 $ Since *-multiplication from the
 right hand side preserves the zeroes of the function on the left hand
 side, the set $\{Q_k - g*v_k, k \in \bN\}$  defines a global function with prescribed jets.
 \end{proof}

\end{document}